\newtheorem{lemma}{Lemma}
\newtheorem{remark}{Remark}
\newtheorem{proposition}{Proposition}
\newcommand \bn {\boldsymbol{\mathrm{n}}}
\newcommand \bw {\boldsymbol{\mathrm{w}}}
\newcommand \bomega {\boldsymbol{\mathrm{\omega}}}
\newcommand \bu {\boldsymbol{\mathrm{u}}}
\newcommand \bv {\boldsymbol{\mathrm{v}}}
\newcommand \bg {\boldsymbol{\mathrm{g}}}
\newcommand \bff {\boldsymbol{\mathrm{f}}}
\newcommand \blambda {\boldsymbol{\mathrm{\lambda}}}
\newcommand \bLambda {\boldsymbol{\mathrm{\Lambda}}}
\newcommand \bmu {\boldsymbol{\mathrm{\mu}}}
\newcommand \bvarphi {\boldsymbol{\mathrm{\varphi}}}
\newcommand \bpsi {\boldsymbol{\mathrm{\psi}}}
\newcommand \p {\partial}
\newcommand \R {\mathbb{R}}
\newcommand \N {\mathbb{N}}
\renewcommand \L {\mathrm{L}}
\newcommand \I {\mathrm{I}}
\renewcommand \d {\mathrm{d}}
\renewcommand \div {\mathrm{div}}
\gdef\SetFigFont#1#2#3#4#5{%
  \reset@font\fontsize{#1}{#2pt}%
  \fontfamily{#3}\fontseries{#4}\fontshape{#5}%
  \selectfont}%
\title{XFEM based fictitious domain method for linear elasticity model with crack\thanks{This work is supported by...}}
\author{Olivier Bodart, Val\'erie Cayol, S\'ebastien Court\thanks{Laboratoire de Math\'ematiques UMR CNRS 6620, Universit\'e Blaise Pascal, Campus des C\'ezeaux, F-63177 Aubi\`ere Cedex, France, email: {\tt sebastien.court@math.univ-bpclermont.fr}.}, Jonas Koko}
\begin{document}

\maketitle

\begin{abstract}
Reduction of computational cost of solutions is a key issue to crack identification or crack propagation problems. One of the solution is to avoid re-meshing the domain when the crack position changes or when the crack extends. To avoid re-meshing, we propose a new finite element approach for the numerical simulation of discontinuities of displacements generated by cracks inside elastic media. The approach is based on a fictitious domain method originally developed for Dirichlet conditions for the Poisson problem and for the Stokes problem, which is adapted to the Neumann boundary conditions of crack problems. The crack is represented by level-set functions. Numerical tests are made with a mixed formulation to emphasize the accuracy of the method, as well as its robustness with respect to the geometry enforced by a stabilization technique. In particular an inf-sup condition is theoretically proven for the latter. A realistic simulation with a uniformly pressurized fracture inside a volcano is given for illustrating the applicability of the method.
\end{abstract}

\noindent{\bf Keywords:} Finite Element Methods, Fictitious domain methods, Xfem, Crack, Linear Elasticity Model.\\
\hfill \\
\noindent{\bf AMS subject classifications (2010):} 74B05, 74S05, 65N30, 74R10.

%\tableofcontents

\section{Introduction}
Recovering information on cracks located inside elastic media and predicting their propagation is a key topic for several geophysics and engineering problems. For instance, the analysis of ground deformation is used to search for the shape, position, and stress changes of magma-filled fractures at volcanoes (see \cite{Wauthier}) and of seismogenic faults \cite{Ozawa} in order to assess the associated hazards. Studying the propagation of fluid filled fractures is central to the study of hydraulic fracturing of hydrocarbon reservoirs or the formation of ore deposits.\\
These problems usually involve multiple computations for which only part of the boundaries is modified: Determination of crack characteristics from ground deformation usually requires hundreds of computations with different crack configurations and study of fracture propagation requires computation of incrementally larger crack surfaces.\\
For these problems to be numerically tractable, one possibility is to use methods in which the domain does not have to be meshed, such as boundary element methods \cite{Crouch}, for which meshing is limited to the boundaries so that modification of the system is only required for the modified boundaries. An other possibility is to use domain methods, such as fictitious domain methods in which meshes are non-conforming so that re-assembly of the whole system is avoided.\\
Boundary element methods involve full non-symmetric matrices. These methods \cite{Crouch} can take anisotropic media into account, but treating heterogeneous media requires the definition of new boundary, which increases matrices dimensions and involves very long computation times. As a consequence, heterogeneous media considered using boundary element methods only have two different material properties. On the other hand, domain methods, such as finite element methods, can deal with anisotropic and heterogeneous media with no increase in the system dimension and no extra numerical cost. In order to avoid re-meshing when the crack is updated and to treat heterogeneous media appropriately, we have chosen to develop a finite element method in which cracks are taken into account with a fictitious domain method. Besides accuracy, we require the method to be robust with respect to the problem geometry: The same results must be obtained whatever the way the crack is intersecting the mesh.\\

Our method is based on an artificial extension of the considered crack (Figure~\ref{figsplit}) in order to split the domain into two sub-domains. It is different from the way the crack is extended in \cite{Sokolowski}, where the extension is made such that the crack withdraws into itself. The aim is to double properly the degrees of freedom around the crack. Discontinuities of the displacement field generated by traction forces (Neumann-type boundary conditions) imposed on both sides of the crack have to be simulated. We can impose other Neumann-type conditions on this interface. The fictitious domain approach we implement is inspired by Xfem \cite{MoesD}, since it consists partially in cutting the basis functions near or around the interfaces; But, unlike Xfem \cite{Chahine, Laborde, Liu2014, reviewXfem}, we do not enrich our finite element spaces functions with singular functions, as we intend to avoid these enrichments by minimizing the number of updates when the position of the crack is modified. This approach has been first introduced in \cite{HaslR} for the Poisson problem, and in \cite{Court} in the context of Fluid-Structure Interactions, for Dirichlet conditions.\\

Across the artificial extension considered for the crack, there is no discontinuity, so we impose a homogeneous displacement jump condition with a Lagrange multiplier. This boundary is also taken into account with a fictitious domain approach, and the multiplier aforementioned is a dual variable for which we want a good approximation. This point is crucial for two reasons: First this quantity represents the constraints at stake in this region, whose the knowledge is required for crack propagation problems. Secondly this quantity takes part in some inversion algorithms (like gradient algorithms based on the computation of a direct adjoint system). Getting a good approximation of this multiplier is not guaranteed {\it a priori}, because the degrees of freedom considered on the underlying mesh do not match the crack and its extension, and it is the price to pay when we do not provide enrichment of basis functions like it is done with Xfem-type methods, where the enrichment of the basis functions has to be done according to the geometry of the crack, which makes part of the updates we want to avoid. For circumventing this drawback, we carry out a stabilization technique of augmented Lagrangian-type \`a la Barbosa-Hughes~\cite{Barbosa1, Barbosa2}. This technique theoretically ensures an unconditional optimal convergence for the multiplier, but improvement is mainly observed when we perform numerical tests related to the robustness with respect to the geometry.\\
%Note that the convergence for this kind of dual variables is not {\it a priori} possible with Nitsche-type methods (see \cite for instance), because in these formulations the multiplier is simply eliminated. The main advantage of Nitsche methods is more its simplicity when treating complex boundary conditions for overlapping meshes.\\
%The crack and its extension are implemented with the help of level-set functions. Let us quote some references which treat of cracked domains with level-set functions, like \cite{MoesG, Stazi, Stolarska, Sukumar}.\\

The paper is organized as follows: In Section~\ref{secsetting} we introduce the theoretical problem, we explain why and how we adopt the extension method for considering the crack and we give the continuous abstract weak formulation. In Section~\ref{sec3} we first detail the discrete formulation, in particular the principles of the fictitious domain method developed for the interfaces associated with the crack. Then, in Section~\ref{sectha0}, we provide the theoretical analysis without the augmented Lagrangian technique. Next in Section~\ref{secaug} we introduce the stabilization technique which forces the multiplier to reach the desired value. Lemma~\ref{lemmainfsup} shows that an Inf-Sup condition is automatically satisfied while performing the stabilization technique. Section~\ref{secimplem} is devoted to practical explanations for the implementation. In Section~\ref{secexp0} we present some 2D numerical tests without stabilization which estimate the rates of convergence for the displacement as well as for the multiplier. First illustrations in 2D are also given. The choice of the stabilization parameter is discussed in Section~\ref{secgamma}, and this part is concluded with numerical tests in Section~\ref{secrates1} providing rates of convergence with the stabilization. In Section~\ref{secrobust} we show that the interest of the stabilization technique lies in the criteria of robustness with respect to the geometry. Last, a realistic 3D simulation of an over-pressured magma-filled fracture inside {\it Piton de la Fournaise} volcano is performed in Section~\ref{secPiton}. Conclusion is given in Section~\ref{seconc}.

\section{Setting of the problem} \label{secsetting}

\subsection{The original elastic problem}
 \label{sec2}
Given a domain $\Omega$ of $\R^d$ ($d = 2$ or $3$), and a crack $\Gamma_T \subset \subset \Omega$ represented by a curve or a surface parameterized by an injective mapping, we consider a static linear elasticity model governed by the following system:
\begin{eqnarray*}
\left\{ \begin{array}{rcl}
-\div \ \sigma_{L} (\bu) = \bff & & \text{in } \Omega , \\
\bu = 0 & & \text{on } \p \Omega, \\
\sigma_{L}(\bu)\bn  = p \bn & & \text{on } \Gamma_T.
\end{array}
\right.
\end{eqnarray*}
In this system the displacement of the solid is denoted by $\bu$, body forces (like the gravity) by $\bff$, and $\sigma_L(\bu) = 2\mu_L \varepsilon(\bu) + \lambda_L(\div \ \bu)\I_{\R^d}$ denotes the Lam\'e stress tensor, with
\begin{eqnarray*}
\varepsilon(\bu) & = & \frac{1}{2}\left( \nabla \bu + \nabla \bu ^T \right).
\end{eqnarray*}
The real numbers $\mu_L$ and $\lambda_L$ are the Lam\'e coefficients.
%can be related to the Poisson coefficient $\nu$ and the Young modulus $E$ by the formulas below:
%\begin{eqnarray*}
%\lambda_L = \frac{E\nu}{(1-2\nu)(1+\nu)}, & \quad & \mu_L = \frac{E}{2(1+\nu)}. 
%\end{eqnarray*}
The pressure force of value $p > 0$ is applied on both sides of the crack $\Gamma_T$, so we have to specify the {\it outward} normal $\bn$ on $\Gamma_T$. In order to determine solutions on both sides of the crack, we relate displacements on each side of the fracture to the displacement discontinuity across the fracture.

\subsection{Extension of the fracture}
In order to give a sense to both sides of the crack, we have to be able to determine whether a point of the domain lies on one side of the fracture or the other. The most convenient way we have found consists in uncoupling the problem by setting two unknowns displacements instead of a global one. We extend the crack $\Gamma_T$ to $\Gamma$, as shown in Figure~\ref{figsplit} below:\\
\begin{figure}[!h]
\begin{center}\hspace*{120pt}
\scalebox{0.40}{
%\begin{picture}(0,0)%
\includegraphics{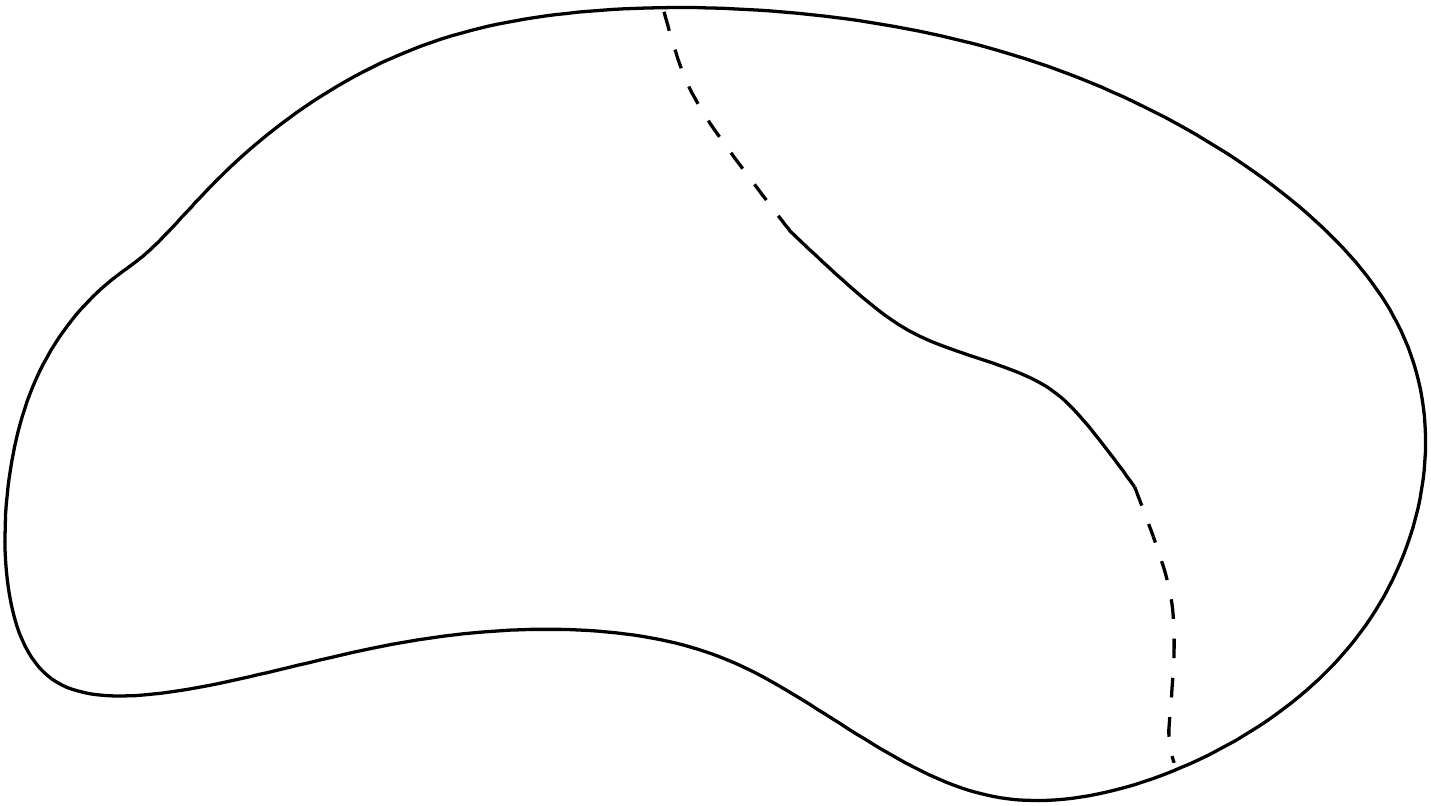}%
%\caption{Fig.~1: Splitting the domain according to the crack.}
%\end{picture}%
\setlength{\unitlength}{4144sp}%
\begin{picture}(6539,3682)(1473,-4801)
\put(-1086,-3011){\makebox(0,0)[lb]{\smash{{\SetFigFont{22}{24.4}{\familydefault}{\mddefault}{\updefault}{\color[rgb]{0,0,0}$\Gamma_T$}%
}}}}
\put(-3561,-3046){\makebox(0,0)[lb]{\smash{{\SetFigFont{22}{24.4}{\familydefault}{\mddefault}{\updefault}{\color[rgb]{0,0,0}$\Omega^+$}%
}}}}
\put(-2220,-1996){\makebox(0,0)[lb]{\smash{{\SetFigFont{22}{24.4}{\familydefault}{\mddefault}{\updefault}{\color[rgb]{0,0,0}$\Gamma_0$}%
}}}}
\put(-150,-4246){\makebox(0,0)[lb]{\smash{{\SetFigFont{22}{24.4}{\familydefault}{\mddefault}{\updefault}{\color[rgb]{0,0,0}$\Gamma_0$}%
}}}}
\put(-311,-2266){\makebox(0,0)[lb]{\smash{{\SetFigFont{22}{24.4}{\familydefault}{\mddefault}{\updefault}{\color[rgb]{0,0,0}$\Omega^-$}%
}}}}
\end{picture}
}
\end{center}
\centering \caption{Splitting of the domain according to the crack.\label{figsplit}}
\end{figure}
\FloatBarrier

Let us justify why we adopt this method of extension of the crack. First, in view of the boundary conditions we impose on the crack $\Gamma_T$, it is necessary to define two unit normal vectors on this interface, and thus to be able to split the domain as a function of the discontinuity induced by these boundary conditions. As aforementioned in the introduction, it is then more convenient to consider a configuration for which we are able to locate a point with respect to the crack.\\
The relevance of such an approach can be underlined by the theoretical context. Indeed, the Cauchy problem as well as the identification for crack-like boundaries is originally ill-posed. See for instance \cite{cauchycrack}. Splitting the domain as described above can enable us to recover existence and uniqueness of a (at least weak) solution.\\
The question of the possibility of making a crack extension has to be answered. Namely we seek the conditions under which a crack extension splitting domain $\Omega$ into two sub-domains exists. A first answer could be to only consider cracks for which an extension exists. Such cracks necessarily have to be represented as an injective curve/surface. An argument for this answer is more physical and results from the fact that, modeling crustal displacements, the domain considered includes cracks. In \cite{Fukushima2005} fractures inside a realistic 3D volcano were described by a specific parameterization, which can easily be extended. Moreover, this extension can be constructed and implemented in practice. For more details on this kind of extension, we refer to Section~\ref{secextension}.
%In practice, cracks are numerically described with level-set functions. A first one $ls1$ describes the whole boundary $\Gamma = \Gamma_T \cup \Gamma_0$ with the equation $ls1 = 0$. Then, we cut $\Gamma$ into $\Gamma_T$ with two auxiliary functions $ls2$ and $ls3$: The degrees of freedom concerned by $\Gamma_T$ will have to obey simultaneously the inequalities $ls2 < 0$ and $ls3 < 0$, while those concerned by $\Gamma_0 = \Gamma \setminus \Gamma_T$ will be the one of $\Gamma$ which satisfy $ls2 \geq 0$ or $ls3 \geq 0$. Using level-set functions, the position and shape of cracks can be modified when running inversions and cracks can be extended to study their propagation.
%This approach could be convenient for further purposes. As explained in the introduction, the underlying goal of the method proposed in this article lies in a framework where the position and the shape of the crack would be lead to evolve. In this evolution the only objects that would have to be updated would be these level-set functions - and of course also the objects constructed from them. Another interest of considering $\Gamma_T$ cut by these level-set functions could lie in the study of propagation of fractures.\\
%In summary, our method has been specially thought for further applications, and consequently the compatibility between our approach and what we aim at doing with this method has been verified, particularly in the implementation.

\subsection{The transformed problem}

\noindent The global domain $\Omega$ is now split into two sub-domains $\Omega^+$ and $\Omega^-$. We have:
\begin{eqnarray*}
\Gamma = \Gamma_0 \cap \Gamma_T, & \quad & \Omega = \Omega^+ \cup \Gamma \cup \Omega^-.
\end{eqnarray*}
Let us now denote $\bu^+ = \bu_{| \Omega^+}$ and $\bu^- = \bu_{| \Omega^+}$. On the artificial boundary $\Gamma_0$ - which is not connected - we have to ensure the continuity of the displacement, namely $\bu^+ - \bu^- = 0$. The system becomes:\footnote{The symbol $\pm$ represents the fact that we consider both formulations involving the symbols $+$ and $-$, for the sake of concision. The outward normal of domain $\Omega^{\pm}$ is denoted by $\bn^{\pm}$.}
\begin{eqnarray}
\left\{ \begin{array}{rcl}
-\div \ \sigma_{L} (\bu^{\pm}) = \bff & & \text{in } \Omega^{\pm}, \\
\bu^{\pm} = 0 & & \text{on } \p \Omega \cap \p \Omega^{\pm}, \\
\left( \sigma_{L}(\bu)\bn \right)^{\pm} = p \bn^{\pm} & & \text{on } \Gamma_T, \\
\left[ \bu \right] = 0 & & \text{across } \Gamma_0 = \Gamma \setminus \Gamma_T,\\
\left[ \sigma_L(\bu) \right]\bn^+ = 0 & & \text{across } \Gamma_0.
\end{array}
\right. \label{pbdec}
\end{eqnarray}
Notation $\left[ \bvarphi \right] = \bvarphi^+ - \bvarphi^-$ refers to the jump of a function $\bvarphi$ across $\Gamma_0$. We assume that $\bff \in \mathbf{L}^2(\Omega)$, and we still denote by $\bff$ its restriction to $\Omega^+$ or $\Omega^-$. The homogeneous Dirichlet condition on $\p \Omega$ can be replaced by non-homogeneous boundary conditions mixing Neumann conditions and Dirichlet conditions. Moreover, the pressure condition on $\Gamma_T$ can be replaced by general conditions, as $\left( \sigma_{L}(\bu)\bn \right)^{\pm} = \pm\boldsymbol{\mathrm{g}}$.

\subsection{Continuous formulation}
Consider the following functional spaces:
\begin{eqnarray*}
\mathbf{V}^+ & = & \left\{ \bv\in \mathbf{H}^1(\Omega^+) \mid \bv = 0 \text{ on } \p \Omega \cap \p \Omega^+ \right\}, \\
\mathbf{V}^- & = & \left\{ \bv\in \mathbf{H}^1(\Omega^-) \mid \bv = 0 \text{ on } \p \Omega \cap \p \Omega^- \right\}, \\
\mathbf{W} & = & \left(\mathbf{H}^{1/2}(\Gamma_0) \right)'.%\mathbf{H}^{-1/2}(\Gamma_0) = \left(\mathbf{H}^{1/2}(\Gamma_0) \right)'.
\end{eqnarray*}
%For mathematical reasons (see Lemma~\ref{lemmaco} in particular), we shall assume that the following hypothesis is satisfied:
%\begin{eqnarray*}
%$\mathbf{H0}$ & \qquad \qquad &  
%\end{eqnarray*}
We choose to impose the jump condition on $\Gamma_0$ by a Lagrange multiplier $\blambda$. A weak solution of system~\eqref{pbdec} can be seen as the stationary point in $\mathbf{V}^+\times \mathbf{V}^- \times \mathbf{W}$ of the following Lagrangian:
\begin{eqnarray}
\mathcal{L}_0(\bu^+, \bu^-, \blambda) & = & \frac{1}{2}\int_{\Omega^+} \sigma_{L}(\bu^+) : \varepsilon(\bu^+)\d \Omega^+ + \frac{1}{2}\int_{\Omega^-} \sigma_{L}(\bu^-) : \varepsilon(\bu^-)\d \Omega^- \nonumber \\
& & - \int_{\Omega^+} \bff \cdot \bu^+  \d \Omega^+ 
- \int_{\Omega^-} \bff \cdot \bu^-  \d \Omega^- 
- \int_{\Gamma_T}\bu^+ \cdot p\bn^+ \d \Gamma_T - \int_{\Gamma_T}\bu^- \cdot p\bn^- \d \Gamma_T \nonumber \\
& & + \left\langle \blambda , (\bu^+ - \bu^-) \right\rangle_{\mathbf{W}, \mathbf{W}'}. \label{L0}
\end{eqnarray}
In this expression $\displaystyle \langle \ \cdot \ , \ \cdot \ \rangle_{\mathbf{W},\mathbf{W}'}$ denotes the duality pairing between $\mathbf{W}$ and $\mathbf{W}'$, and $\sigma_{L}(\bu) : \varepsilon(\bu) = \mathrm{trace}\left(\sigma_L(\bu)\varepsilon(\bu)^T\right)$ denotes the classical inner product for matrices. Let us recall that the bilinear form $(\bu, \bv) \mapsto \sigma_{L}(\bu) : \varepsilon(\bv)$ is symmetric. 

\begin{remark} \label{remarkcont}
Note that in the writing of this Lagrangian the jump condition $\displaystyle \left[ \sigma_L(\bu) \right]\bn^+ = 0$ on $\Gamma_0$ (fifth equation of~\eqref{pbdec}) is no longer taken into account. 
%In fact, taking into account the condition $\left[ \bu \right] = 0$ on $\Gamma_0$ with a multiplier $\blambda$ ensures that this condition is satisfied, since we have $\blambda = \sigma_L(\bu^-)\bn^- = -\sigma_L(\bu^+)\bn^+$ with $\bn^+ = -\bn^-$. 
Indeed, the first-order optimality conditions for $\mathcal{L}_0$ give
\begin{eqnarray*}
\int_{\Omega^{\pm}} \sigma_{L}(\bu^{\pm}) : \varepsilon(\bv)\d \Omega^{\pm}
\pm \left\langle \blambda , \bv \right\rangle_{\mathbf{W},\mathbf{W}'} & = & 
\int_{\Omega^{\pm}} \bff \cdot \bv \d \Omega^{\pm}+ \int_{\Gamma_T} \bv \cdot p\bn^{\pm} \d \Gamma_T, 
\end{eqnarray*}
for all test function $\bv \in \mathbf{V}^{\pm}$. On the other hand, taking the inner product by $\bv$ of the first equation of~\eqref{pbdec} yields, after integration by parts
\begin{eqnarray*}
\int_{\Omega^{\pm}} \sigma_{L}(\bu^{\pm}) : \varepsilon(\bv)\d \Omega^{\pm}
- \left\langle \sigma_L(\bu^{\pm})\bn^{\pm} , \bv \right\rangle_{\mathbf{W},\mathbf{W}'} & = & 
\int_{\Omega^{\pm}} \bff \cdot \bv \d \Omega^{\pm} + \int_{\Gamma_T} \bv \cdot p\bn^{\pm} \d \Gamma_T.
\end{eqnarray*}
Thus we identify $\blambda = -\sigma_L(\bu^+)\bn^+$ and $\blambda = \sigma_L(\bu^-)\bn^-$.
\end{remark}

The variational problem derived from the functional $\mathcal{L}_0$ as first-order optimality conditions is then
\begin{eqnarray}
& & \text{Find $(\bu^+, \bu^-, \blambda)$ in $\mathbf{V}^+\times \mathbf{V}^- \times \mathbf{W}$ such that} \nonumber \\
& & \left\{ \begin{array} {lcl}
\mathcal{A}_0^{\pm}((\bu^+, \bu^-, \blambda);\bv) = l^{\pm}(\bv) & \quad & \forall \bv \in \mathbf{V}^{\pm}, \\
\mathcal{B}_0((\bu^+, \bu^-, \blambda);\bmu) = 0, & \quad & \forall \bmu \in \mathbf{W},
\end{array} \right. 
\label{pbfv0}
\end{eqnarray}
where we set
\begin{eqnarray}
\mathcal{A}_0^{\pm}((\bu^+, \bu^-, \blambda);\bv) & = & 
\int_{\Omega^{\pm}} \sigma_{L}(\bu^{\pm}) : \varepsilon(\bv)\d \Omega^{\pm}
\pm \left\langle \blambda , \bv \right\rangle_{\mathbf{W},\mathbf{W}'}, \nonumber\\
l^{\pm}(\bv) & = &  \int_{\Omega^{\pm}} \bff \cdot \bv  \d \Omega^{\pm}
+ \int_{\Gamma_T} \bv \cdot p\bn^{\pm} \d \Gamma_T, \label{Lrhs}\\
\mathcal{B}_0((\bu^+, \bu^-, \blambda);\bmu) & = & 
 \left\langle \bmu , \bu^+ \right\rangle_{\mathbf{W},\mathbf{W}'} - \left\langle \bmu , \bu^- \right\rangle_{\mathbf{W}, \mathbf{W}'}. \nonumber
\end{eqnarray}

\section{Discrete formulation} \label{sec3}
The discrete formulation is adapted from \cite{HaslR} and \cite{Court}: It is a {\it fictitious domain} method, in which the choice of the degrees of freedom for the multiplier on the boundary $\Gamma_0$ is made independently of the mesh. Let us first explain how we proceed to take into account degrees of freedom which do not originally lie on the edges of the mesh.

\subsection{The fictitious domain approach} \label{secfict}
Unknowns for the fictitious domains are first considered on the whole domain  $\Omega$. Let us consider some discrete finite element spaces, {$\tilde{\mathbf{V}}_h \subset \mathbf{H}^1(\Omega)$ and $\tilde{\mathbf{W}}_h \subset \mathbf{L}^2(\Omega)$}. These spaces can be defined on the same structured mesh of $\Omega$, which can be chosen Cartesian. We set
\begin{eqnarray}
\tilde{\mathbf{V}}_h & = & \left\{\bv_h \in C(\overline{\Omega})\mid {\bv_h}_{\left| \p \Omega\right.} = 0, \  {\bv_h}_{\left| T\right.} \in P(T), \ \forall T \in \mathcal{T}_h \right\}, \label{defvtilde}
\end{eqnarray}
where $P(T)$ is a finite dimensional space of regular functions, including polynomial functions of order $k \geq 1$ on a triangle $T$ in the set $\mathcal{T}_h$ of the triangles of the mesh. See \cite{Ern} for more details. The mesh parameter stands for $\displaystyle h = \max_{T\in \mathcal{T}_h} h_T$, where $h_T$ is the diameter of the triangle $T$. We define
\begin{eqnarray*}
\mathbf{V}^{+}_h := {\tilde{\mathbf{V}}{}_h}_{\left| \Omega^{+} \right.}, \quad 
\mathbf{V}^{-}_h := {\tilde{\mathbf{V}}{}_h}_{\left| \Omega^{-} \right.}, \quad  
\mathbf{W}_h := {\tilde{\mathbf{W}}{}_h}_{\left| \Gamma_0 \right.},
\end{eqnarray*}
which are natural respectively discretizations of $\mathbf{V}^+$, $\mathbf{V}^-$ and $\left(\mathbf{H}^{1/2}(\Gamma_0) \right)'$. The selection of degrees of freedom for these spaces is illustrated in Figure~\ref{figdof}. This approach is similar to the eXtended Finite Element Method \cite{MoesD}, except that the standard basis functions near the boundary $\Gamma$ are not enriched by singular functions but only multiplied by Heaviside functions ($H({\bf x}) = 1$ for ${\bf x} \in \Omega^{\pm}$ and $H({\bf x})=0$ for ${\bf x}\in \Omega \setminus \Omega^{\pm}$), and the corresponding products come up in the integrals of the variational formulation of the problem. This kind of strategy is also adopted in \cite{Gerstenberger2008} and \cite{Choi2010} for instance.

\begin{figure}[!h]
\begin{center}
\begin{tabular} {ccc}
\includegraphics[trim = 0cm 0cm 0cm 0cm, clip, scale=0.37]{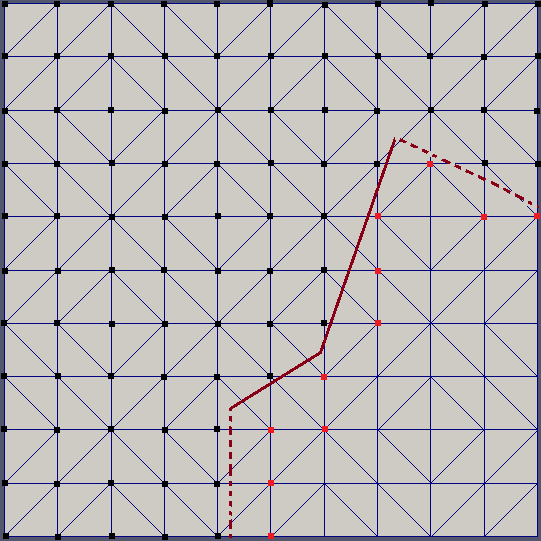}
&
\includegraphics[trim = 0cm 0cm 0cm 0cm, clip, scale=0.37]{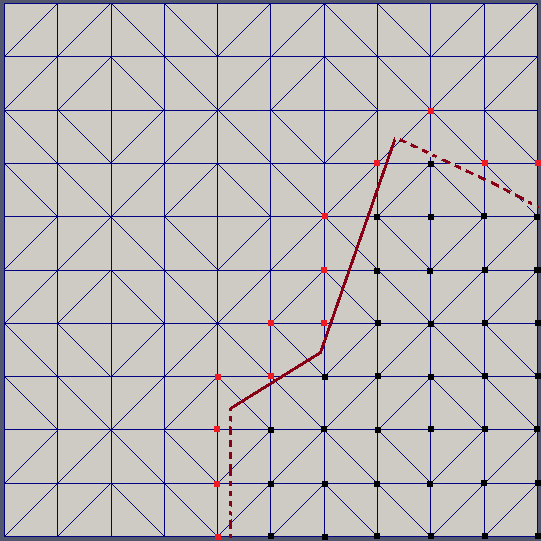}
&
\includegraphics[trim = 0cm 0cm 0cm 0cm, clip, scale=0.37]{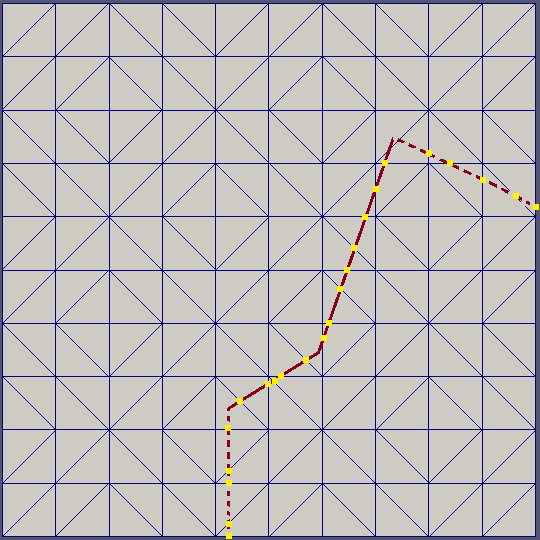}
\end{tabular}
\end{center}
\centering\caption{Selection of degrees of freedom: The black base nodes are kept for displacements $\bu^{\pm}$, the red ones are used for cutting the standard basis functions, and the yellow ones are those of the multiplier $\blambda$.
\label{figdof}}
\end{figure}
\FloatBarrier

An approximation of problem~\eqref{pbfv0} is given as follows
\begin{eqnarray}
& & \text{Find $(\bu^+_h, \bu^-_h, \blambda_h)$ in $\mathbf{V}^+_h\times \mathbf{V}^-_h \times \mathbf{W}_h$ such that} \nonumber \\
& & \left\{ \begin{array} {lcl}
a_0^{+}(\bu^{+}_h,\bv^+_h) + b_0(\blambda_h, \bv^+_h) = l^{+}(\bv^+_h) & \quad & \forall \bv^+_h \in \mathbf{V}^{+}_h, \\
a_0^{-}(\bu^{-}_h,\bv^-_h) - b_0(\blambda_h, \bv^-_h) = l^{-}(\bv^-_h) & \quad & \forall \bv^-_h \in \mathbf{V}^{-}_h, \\
b_0(\bu^+_h,\bmu_h) - b_0(\bu^-_h,\bmu_h) = 0, & \quad & \forall \bmu_h \in \mathbf{W}_h,
\end{array} \right. \label{discfv}
\end{eqnarray}
where
\begin{eqnarray}
a_0^{+}(\bu^{+},\bv^+) = \int_{\Omega^+} \sigma_{L}(\bu^+) : \varepsilon(\bv^+)\d \Omega^+, & \quad &
a_0^{-}(\bu^{-},\bv^-) = \int_{\Omega^-} \sigma_{L}(\bu^-) : \varepsilon(\bv^-)\d \Omega^-, \label{fbil1} \\
%b^+(\bmu_h,\bu^+_h) = \int_{\Gamma_0} \bmu_h \cdot \bu^+_h \d \Gamma_0, & \quad &
b_0(\bu^{\pm},\bmu_h) = \int_{\Gamma_0} \bmu \cdot \bu^{\pm} \d \Gamma_0.
\label{fbil2}
\end{eqnarray}
%where the expressions of $a_0{\pm}$ and $b_0$ are given by~\eqref{fbil1}-\eqref{fbil2}.\\
Note that the duality pairing between $\left(\mathbf{H}^{1/2}(\Gamma_0) \right)'$ and $\mathbf{H}^{1/2}(\Gamma_0)$ has been turned in the expression of~$b_0$ into the inner product of $\mathbf{L}^2(\Gamma_0)$. We could avoid this by using a Laplace-Beltrami operator, but under strong regularity assumptions, it is simpler to proceed like this. Thus we now consider that
\begin{eqnarray*}
\mathbf{W}' \equiv \mathbf{W}  =  \mathbf{L}^2(\Gamma_0),  &\quad &
\mathbf{W}_h  \subset \mathbf{L}^2(\Gamma_0),
\end{eqnarray*}
but we can keep the abstract formalism between $\mathbf{W}$ and $\mathbf{W}'$ in the mathematical analysis. 

\subsection{Theoretical convergences - limitations on the orders} \label{sectha0}
We make the following hypothesis:
\begin{eqnarray*}
(\mathbf{H}):  \qquad \qquad  \overline{\bmu}_h \in \mathbf{W}_h: 
\ b_0(\bv^{\pm},\overline{\bmu}_h) = 0 \ \forall \bv^{\pm} \in \mathbf{V}^{\pm}_h
& \Rightarrow & \overline{\bmu}_h = 0.
\end{eqnarray*}
This hypothesis is not as strong as an {\it Inf-sup condition} which would lead automatically to the optimal order of convergence for all the variables - primal and dual. See \cite{Fortin} for more details. It only requires that the discretization space for the displacement is at least as rich as the one chosen for the multiplier on $\Gamma_0$.

\begin{lemma} \label{lemmaco}
%Assume that $\mathbf{H0}$ is satisfied. 
%Assume that $\partial\Omega \cap \p \Omega^{\pm} \neq \emptyset$.
The bilinear forms $a_0^{+}$ and $a_0^-$ defined in~\eqref{fbil1} as
\begin{eqnarray*}
a_0^{\pm} : (\bu, \bv) \mapsto \int_{\Omega^{\pm}} \sigma_L(\bu):\varepsilon(\bv) \d \Omega^{\pm}
\end{eqnarray*}
are respectively uniformly $\mathbf{V}^+_h$-elliptic and $\mathbf{V}^-_h$-elliptic. Namely there exists $\alpha >0$ independent of $h$ such that for all $\bv^{\pm}_h \in \mathbf{V}_h^{\pm}$ we have
\begin{eqnarray*}
a_0^{\pm}(\bv^{\pm},\bv^{\pm}) & \geq & \alpha \left\| \bv^{\pm}_h\right\|^2_{\mathbf{V}^{\pm}}.
\end{eqnarray*}
\end{lemma}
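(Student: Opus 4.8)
The plan is to reduce the discrete ellipticity statement to the classical continuous one by exploiting conformity. The first thing I would observe is that the discrete spaces are \emph{conforming}: every $\bv^{\pm}_h \in \mathbf{V}^{\pm}_h$ is the restriction to $\Omega^{\pm}$ of some $\tilde{\bv}_h \in \tilde{\mathbf{V}}_h$, hence a continuous, piecewise-polynomial function of $\mathbf{H}^1(\Omega^{\pm})$ vanishing on $\p \Omega \cap \p \Omega^{\pm}$ (since $\tilde{\bv}_h$ already vanishes on all of $\p \Omega$). Consequently $\mathbf{V}^{\pm}_h \subset \mathbf{V}^{\pm}$. This inclusion is the decisive point: it lets me transfer a coercivity estimate proved on the fixed continuous space $\mathbf{V}^{\pm}$ to the whole family $(\mathbf{V}^{\pm}_h)_h$ with one and the same constant, so the uniformity in $h$ comes for free.

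Next I would expand the quadratic form through the Lam\'e tensor. Since $\sigma_L(\bv) = 2\mu_L\varepsilon(\bv) + \lambda_L(\div \bv)\I_{\R^d}$ and $\varepsilon(\bv):\I_{\R^d} = \div \bv$, one gets
\[
a_0^{\pm}(\bv,\bv) = \int_{\Omega^{\pm}} 2\mu_L\, \varepsilon(\bv):\varepsilon(\bv) + \lambda_L (\div \bv)^2 \,\d\Omega^{\pm}.
\]
Under the physical assumptions $\mu_L > 0$ and $\lambda_L \geq 0$, the second integrand is nonnegative and may be discarded, yielding the lower bound $a_0^{\pm}(\bv,\bv) \geq 2\mu_L \left\| \varepsilon(\bv)\right\|^2_{\mathbf{L}^2(\Omega^{\pm})}$.

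The remaining ingredient is Korn's inequality on the fixed domain $\Omega^{\pm}$. Because functions of $\mathbf{V}^{\pm}$ vanish on the portion $\p \Omega \cap \p \Omega^{\pm}$ of the boundary, which I assume to have positive $(d-1)$-dimensional measure so that the only rigid displacement compatible with this condition is the null one, there exists a constant $C^{\pm}_K > 0$ depending only on $\Omega^{\pm}$ such that $\left\| \varepsilon(\bv)\right\|^2_{\mathbf{L}^2(\Omega^{\pm})} \geq C^{\pm}_K \left\| \bv\right\|^2_{\mathbf{H}^1(\Omega^{\pm})}$ for every $\bv \in \mathbf{V}^{\pm}$. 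Chaining the two estimates and setting $\alpha = 2\mu_L \min(C^+_K, C^-_K)$ gives the claim for each $\bv^{\pm}_h \in \mathbf{V}^{\pm}_h \subset \mathbf{V}^{\pm}$, the norm $\left\| \cdot\right\|_{\mathbf{V}^{\pm}}$ being the $\mathbf{H}^1(\Omega^{\pm})$-norm.

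The step I expect to carry the real weight is the conformity observation $\mathbf{V}^{\pm}_h \subset \mathbf{V}^{\pm}$: once it is in place, the statement is essentially continuous Korn together with a positivity argument, and the delicate \emph{uniform in $h$} requirement dissolves. Had the discretization been nonconforming (for instance discontinuous across element interfaces), one would instead need a genuinely discrete Korn inequality with a mesh-dependent analysis, which is the usual source of difficulty for such estimates; here that difficulty is avoided precisely because the standard basis functions are only multiplied by Heaviside cut-offs and therefore remain $\mathbf{H}^1$-conforming on each subdomain.
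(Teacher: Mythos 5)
Your proof is correct and follows essentially the same route as the paper: both reduce the discrete statement to continuous coercivity on $\mathbf{V}^{\pm}$ via the conformity $\mathbf{V}^{\pm}_h \subset \mathbf{V}^{\pm}$, bound the Lam\'e quadratic form below by $2\mu_L\left\|\varepsilon(\bv)\right\|^2_{[\L^2(\Omega^{\pm})]^{d\times d}}$, and conclude with a Korn-type inequality for fields vanishing on $\p\Omega\cap\p\Omega^{\pm}$. The only difference is one of granularity, not substance: you cite Korn's first inequality with boundary conditions as a known result, whereas the paper derives that same estimate from Korn's second inequality by the standard contradiction--compactness argument (normalization, Rellich, Cauchy sequence in $\mathbf{H}^1$, and elimination of rigid displacements through the trace condition).
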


\begin{proof}
Since $\mathbf{V}_h^{\pm} \subset \mathbf{V}^{\pm}$, let us show the coerciveness of $a^{\pm}$ in $\mathbf{V}^{\pm}$. Arguing by contradiction, assume that there exists $\displaystyle \left(\bv^{\pm}_n\right)_n$ such that for all $n\in \N$ we have
\begin{eqnarray}
n\int_{\Omega^{\pm}} \sigma_L(\bv_n^{\pm}) : \varepsilon(\bv_n^{\pm}) \d \Omega^{\pm} 
& < & \left\| \bv_n^{\pm} \right\|^2_{\mathbf{V}^{\pm}}. \label{ineqn}
\end{eqnarray}
After a quick calculation we have
\begin{eqnarray*}
 \sigma_L(\bv_n^{\pm}) : \varepsilon(\bv_n^{\pm}) & = & 
 2\mu_L | \varepsilon(\bv_n^{\pm}) |^2_{\R^{d\times d}} + \lambda_L (\div \ \bv_n^{\pm})^2.
\end{eqnarray*}
On the other hand, because of the homogeneity of the inequality \eqref{ineqn}, we can assume that $\displaystyle \left\| \bv_n^{\pm} \right\|_{\mathbf{V}^{\pm}} = 1$, without loss of generality. Then, inequality~\eqref{ineqn} implies in particular that $\varepsilon(\bv_n^{\pm})$ tends to $0$ in $\displaystyle [ \L^2(\Omega^{\pm}) ]^{d\times d}$. The Rellich's theorem gives us a subsequence $\displaystyle \left(\bv_m^{\pm}\right)_m$ which has a limit in $\mathbf{L}^2(\Omega^{\pm})$. From the Korn's second inequality (see for instance \cite{Ern}, Theorem 3.78, or \cite[page 10]{Ciarlet97} for the proof) there exists $C > 0$ such that\footnote{In the following, $C$ denotes a generic positive constant independent of the mesh size~$h$.}
\begin{eqnarray*}
\left\| \bv^{\pm}_m - \bv^{\pm}_p \right\|_{\mathbf{H}^1(\Omega^{\pm})} & \leq & C
\left( \left\| \bv^{\pm}_m - \bv^{\pm}_p \right\|_{\mathbf{L}^2(\Omega^{\pm})} + 
\left\| \varepsilon(\bv^{\pm}_m) - \varepsilon(\bv^{\pm}_p) \right\|_{[ \L^2(\Omega^{\pm}) ]^{d\times d}}  \right).
\end{eqnarray*}
Thus $\displaystyle \left(\bv^{\pm}_m\right)_m$ is a Cauchy sequence in $\mathbf{H}^1(\Omega^{\pm})$, and so it tends to some $\bv^{\pm}_{\infty}$ which satisfies in particular $\varepsilon(\bv^{\pm}_{\infty}) = 0$. It is then well-known that $\bv^{\pm}_{\infty}$ reduces to a rigid displacement, that is to say $\bv^{\pm}_{\infty}$ is an affine function of type $\bv^{\pm}_{\infty}(\mathbf{x}) = \mathbf{l}^{\pm} + \mathbf{r}^{\pm}\wedge \mathbf{x}$ (see \cite[page 18]{Temam}). Besides, the trace theorem implies that we have also $\bv^{\pm}_{\infty} = 0$ on $\p \Omega^{\pm}$. It enables us to conclude that $\mathbf{l}^{\pm} = \mathbf{r}^{\pm} = 0$ and thus $\bv^{\pm}_{\infty} = 0$ in $\Omega^{\pm}$, which belies the assumption $\displaystyle \left\| \bv_n^{\pm} \right\|_{\mathbf{V}^{\pm}} = 1$.
\end{proof}

%\begin{remark}
%The assumption $\partial\Omega \cap \p \Omega^{\pm} \neq \emptyset$ is not restrictive
%\end{remark}

\begin{proposition}
Assume that the hypothesis $(\mathbf{H})$ holds. Then Problem~\eqref{discfv} admits a unique solution $(\bu_h^+,\bu_h^-,\blambda_h)$.
\end{proposition}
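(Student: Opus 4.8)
The plan is to exploit the fact that~\eqref{discfv} is a square linear system in finite dimension, so that existence and uniqueness are equivalent: it suffices to show that the associated homogeneous problem (obtained by setting $l^+ = l^- = 0$) admits only the trivial solution. I would therefore assume that a triple $(\bu_h^+, \bu_h^-, \blambda_h)$ solves~\eqref{discfv} with vanishing right-hand sides, and aim to prove that all three components vanish.

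First I would eliminate the multiplier from the energy balance. Choosing $\bv^+_h = \bu^+_h$ in the first equation and $\bv^-_h = \bu^-_h$ in the second, then adding the two identities, yields
\begin{eqnarray*}
a_0^+(\bu^+_h, \bu^+_h) + a_0^-(\bu^-_h, \bu^-_h) + b_0(\blambda_h, \bu^+_h) - b_0(\blambda_h, \bu^-_h) = 0.
\end{eqnarray*}
Since $b_0$ is merely the $\mathbf{L}^2(\Gamma_0)$ inner product, it is symmetric in its arguments, so the cross terms equal $b_0(\bu^+_h, \blambda_h) - b_0(\bu^-_h, \blambda_h)$, which is exactly the left-hand side of the third equation of~\eqref{discfv} tested against $\bmu_h = \blambda_h$, hence equal to zero. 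We are left with $a_0^+(\bu^+_h, \bu^+_h) + a_0^-(\bu^-_h, \bu^-_h) = 0$.

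Next I would invoke the uniform ellipticity established in Lemma~\ref{lemmaco}: each term is bounded below by $\alpha \| \bu^\pm_h \|^2_{\mathbf{V}^\pm}$ with $\alpha > 0$, and both are nonnegative, so their sum vanishes only if $\bu^+_h = 0$ and $\bu^-_h = 0$. It then remains to recover $\blambda_h = 0$. Substituting $\bu^+_h = 0$ into the first equation gives $b_0(\blambda_h, \bv^+_h) = 0$ for all $\bv^+_h \in \mathbf{V}^+_h$, and similarly the second equation gives $b_0(\blambda_h, \bv^-_h) = 0$ for all $\bv^-_h \in \mathbf{V}^-_h$. This is precisely the premise of condition $(\mathbf{H})$, which therefore forces $\blambda_h = 0$.

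The argument is essentially routine once Lemma~\ref{lemmaco} and $(\mathbf{H})$ are in hand, so I do not anticipate a genuine obstacle; the only conceptual point worth flagging is that coerciveness of the forms $a_0^\pm$ alone controls the displacements but says nothing about the multiplier, and it is the role of $(\mathbf{H})$ — \emph{not} a full inf-sup condition — to close the uniqueness argument for $\blambda_h$. Existence then comes for free from the finite-dimensional square structure of the system, which is why no inf-sup estimate is required at this stage, and indeed the absence of a uniform inf-sup bound is precisely what limits the orders of convergence discussed in the remainder of this section.
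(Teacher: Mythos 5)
Your proof is correct and follows essentially the same route as the paper: reduce to the homogeneous system by finite-dimensionality, test with $(\bu_h^+,\bu_h^-)$ and use the third equation to cancel the multiplier terms, apply the coerciveness of Lemma~\ref{lemmaco} to kill the displacements, and then invoke $(\mathbf{H})$ to conclude $\blambda_h=0$. Your explicit remark that the symmetry of $b_0$ is what lets the cross terms match the third equation is a detail the paper leaves implicit, but the argument is the same.
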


\begin{proof}
Problem~\eqref{discfv} is of finite dimension, so the existence of the solution is a consequence of its uniqueness. For proving uniqueness, it is sufficient to show that $(\bu_h^+,\bu_h^-,\blambda_h) = 0$ when $l^{\pm} \equiv 0$. If we add the first two equations of~\eqref{discfv} while choosing $\bv^+_h = \bu^-_h$ and $\bv^+_h = \bu^-_h$, the third equation of~\eqref{discfv} enables us to write
\begin{eqnarray*}
a_0^{+}(\bu^{+}_h,\bu^+_h) + a_0^{-}(\bu^{-}_h,\bu^-_h) & = & 0,
\end{eqnarray*}
and thus by Lemma~\ref{lemmaco} we get $\bu^+_h = \bu^-_h = 0$. Then, the first two equations of~\eqref{discfv} reduce to the conditions of Hypothesis $(\mathbf{H})$, and thus we can conclude that $\blambda_h = 0$.
\end{proof}

We now define the space
\begin{eqnarray*}
\mathbf{\mathbb{V}}_h^0 & = & \left\{
(\bv_h^+, \bv_h^-) \in \mathbf{V}^+_h \times \mathbf{V}^-_h \mid 
b_0(\bv_h^+ - \bv_h^-,\bmu_h ) = 0 \ \forall \bmu_h \in \mathbf{W}_h 
\right\}.
\end{eqnarray*}
Let us give the abstract error estimate for the displacement.

\begin{proposition}
Assume that the hypothesis $(\mathbf{H})$ is satisfied. Let $(\bu^+,\bu^-,\blambda)$ and $(\bu_h^+,\bu_h^-,\blambda_h)$ be the respective solutions of Problems~\eqref{pbfv0} and~\eqref{discfv}. There exists a constant $C>0$ - independent of $h$ - such that
\begin{eqnarray}
\left\| \bu^+ - \bu^+_h \right\|_{\mathbf{V}^+} + \left\| \bu^- - \bu^-_h \right\|_{\mathbf{V}^-} & \leq & 
C\left(
\inf_{(\bv_h^+, \bv_h^-) \in \mathbf{\mathbb{V}}^0_h}
\left( \left\| \bu^+ - \bv^+_h \right\|_{\mathbf{V}^+} + \left\| \bu^- - \bv^-_h \right\|_{\mathbf{V}^-} \right) \right. 
 \left. +
\inf_{\bmu_h\in \mathbf{W}_h} \left\| \blambda - \bmu_h \right\|_{\mathbf{W}}
\right). \nonumber \\ \label{estapriori}
\end{eqnarray}
\end{proposition}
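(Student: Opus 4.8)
The plan is to run a C\'ea-type argument for the constrained discrete problem, using the uniform ellipticity of Lemma~\ref{lemmaco} restricted to the discrete kernel $\mathbb{V}_h^0$, rather than a full Babuška--Brezzi inf-sup condition. This is precisely why only the primal error is controlled here, at the price of a best-approximation term for $\blambda$ on the right-hand side; the multiplier error would require the inf-sup property, which is not assumed at this stage (only $(\mathbf{H})$).

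First I would note that the discrete solution is kernel-conforming: the third equation of~\eqref{discfv} reads $b_0(\bu_h^+ - \bu_h^-, \bmu_h) = 0$ for all $\bmu_h \in \mathbf{W}_h$, so $(\bu_h^+, \bu_h^-) \in \mathbb{V}_h^0$. Fix an arbitrary pair $(\bv_h^+, \bv_h^-) \in \mathbb{V}_h^0$ and an arbitrary $\bmu_h \in \mathbf{W}_h$, and set $\bw_h^{\pm} := \bu_h^{\pm} - \bv_h^{\pm}$. Since $\mathbb{V}_h^0$ is a linear space, $(\bw_h^+, \bw_h^-) \in \mathbb{V}_h^0$ as well; this structural fact is used twice below to annihilate multiplier terms.

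Next I would derive the error relation. Testing the first two continuous equations of~\eqref{pbfv0} against $\bw_h^{\pm} \in \mathbf{V}_h^{\pm} \subset \mathbf{V}^{\pm}$ and the first two discrete equations of~\eqref{discfv} against the same $\bw_h^{\pm}$, summing and subtracting, and using that $b_0(\blambda_h, \bw_h^+ - \bw_h^-) = 0$ (because $\blambda_h \in \mathbf{W}_h$ and $(\bw_h^+,\bw_h^-)\in\mathbb{V}_h^0$), I obtain
\[
a_0^+(\bu^+ - \bu_h^+, \bw_h^+) + a_0^-(\bu^- - \bu_h^-, \bw_h^-) = -\,b_0(\blambda, \bw_h^+ - \bw_h^-).
\]
The right-hand side is the consistency defect, and it does \emph{not} vanish because in general $\blambda \notin \mathbf{W}_h$; this is the main obstacle. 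To handle it I insert $\bmu_h$ and exploit the kernel property once more: since $b_0(\bmu_h, \bw_h^+ - \bw_h^-) = 0$, we have $b_0(\blambda, \bw_h^+ - \bw_h^-) = b_0(\blambda - \bmu_h, \bw_h^+ - \bw_h^-)$, which is now controlled by the best approximation $\|\blambda - \bmu_h\|_{\mathbf{W}}$ through boundedness of $b_0$ together with the trace inequality $\|\bw_h^{\pm}\|_{\mathbf{L}^2(\Gamma_0)} \leq C\|\bw_h^{\pm}\|_{\mathbf{V}^{\pm}}$ (here one uses the identification $\mathbf{W} \equiv \mathbf{L}^2(\Gamma_0)$ adopted after~\eqref{fbil2}).

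Finally I would close the estimate. Writing $\bu^{\pm} - \bu_h^{\pm} = (\bu^{\pm} - \bv_h^{\pm}) - \bw_h^{\pm}$ in the relation above and rearranging yields
\[
a_0^+(\bw_h^+,\bw_h^+) + a_0^-(\bw_h^-,\bw_h^-) = a_0^+(\bu^+ - \bv_h^+, \bw_h^+) + a_0^-(\bu^- - \bv_h^-, \bw_h^-) + b_0(\blambda - \bmu_h, \bw_h^+ - \bw_h^-).
\]
Bounding the left member from below by $\alpha\bigl(\|\bw_h^+\|_{\mathbf{V}^+}^2 + \|\bw_h^-\|_{\mathbf{V}^-}^2\bigr)$ via Lemma~\ref{lemmaco}, bounding the right member from above by continuity of $a_0^{\pm}$ and $b_0$, and dividing by $\|\bw_h^+\|_{\mathbf{V}^+} + \|\bw_h^-\|_{\mathbf{V}^-}$, I get $\|\bw_h^+\|_{\mathbf{V}^+} + \|\bw_h^-\|_{\mathbf{V}^-} \leq C\bigl(\|\bu^+ - \bv_h^+\|_{\mathbf{V}^+} + \|\bu^- - \bv_h^-\|_{\mathbf{V}^-} + \|\blambda - \bmu_h\|_{\mathbf{W}}\bigr)$. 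A triangle inequality $\|\bu^{\pm} - \bu_h^{\pm}\|_{\mathbf{V}^{\pm}} \leq \|\bu^{\pm} - \bv_h^{\pm}\|_{\mathbf{V}^{\pm}} + \|\bw_h^{\pm}\|_{\mathbf{V}^{\pm}}$, followed by taking the infimum over $(\bv_h^+, \bv_h^-) \in \mathbb{V}_h^0$ and the independent infimum over $\bmu_h \in \mathbf{W}_h$, then gives~\eqref{estapriori}.
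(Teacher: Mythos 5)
Your proposal is correct and follows essentially the same route as the paper: a C\'ea-type argument on the discrete kernel $\mathbb{V}_h^0$, using the coercivity of Lemma~\ref{lemmaco}, the cancellation of the $b_0(\blambda_h,\cdot)$ and $b_0(\bmu_h,\cdot)$ terms against $\bw_h^+-\bw_h^-$, and a final triangle inequality. You are in fact slightly more explicit than the paper about the point that the discrete multiplier term only disappears after summing the $+$ and $-$ equations, but the argument is the same.
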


\begin{proof}
From Lemma~\ref{lemmaco}, for all test functions $(\bv^+_h,\bv^-_h) \in \mathbf{\mathbb{V}}_h^0$ we have
\begin{eqnarray*}
\alpha \left\| \bu_h^{\pm} - \bv^{\pm}_h \right\|^2_{\mathbf{V}^{\pm}} & \leq & 
a_0^{\pm}(\bu^{\pm}_h - \bv^{\pm}_h, \bu^{\pm}_h - \bv^{\pm}_h) \\
& = & a_0^{\pm}(\bu^{\pm} - \bv^{\pm}_h, \bu^{\pm}_h - \bv^{\pm}_h) +
l(\bu^{\pm}_h - \bv^{\pm}_h) - a_0^{\pm}(\bu^{\pm}, \bu^{\pm}_h - \bv^{\pm}_h)\\
& = & a_0^{\pm}(\bu^{\pm} - \bv^{\pm}_h, \bu^{\pm}_h - \bv^{\pm}_h) \pm
\left\langle \blambda, \bu^{\pm}_h - \bv^{\pm}_h \right\rangle_{\mathbf{W}, \mathbf{W}'}.
%\alpha \| \bu_h^{\pm} - \bv^{\pm}_h \|_{\mathbf{V}^{\pm}} & \leq &
%C\|\bu^{\pm} - \bv^{\pm}_h\|_{\mathbf{V}^{\pm}} \pm
%\frac{\left\langle \blambda, \bu^{\pm}_h - \bv^{\pm}_h \right\rangle_{\mathbf{W}; \mathbf{W}'}}{\| \bu^{\pm}_h - \bv^{\pm}_h\|_{\mathbf{V}^{\pm}}},
\end{eqnarray*}
Summing the two inequalities above which correspond to the symbols $+$ and $-$, and denoting $\bw_h^{\pm} = \bu^{\pm}_h - \bv^{\pm}_h$, we get
\begin{eqnarray}
\alpha \left( \left\| \bw^{+}_h \right\|^2_{\mathbf{V}^{+}} + 
 \left\| \bw^{-}_h \right\|^2_{\mathbf{V}^{-}} \right)  & \leq &
   a_0^{+}(\bu^{+} - \bv^{+}_h, \bw^{+}_h ) +
  a_0^{-}(\bu^{-} - \bv^{-}_h, \bw^{-}_h ) \nonumber \\
& &  +
\left\langle \blambda, (\bu_h^+ - \bu^-_h) - (\bv_h^+ - \bv^-_h) \right\rangle_{\mathbf{W}, \mathbf{W}'}. \label{ineqcea}
\end{eqnarray}
From the definition of $\mathbf{\mathbb{V}}_h^0$, and noticing that $(\bu_h^+,\bu_h^-) \in \mathbf{\mathbb{V}}_h^0$, we can write
\begin{eqnarray*}
\left\langle \blambda, (\bu_h^+ - \bu^-_h) - (\bv_h^+ - \bv^-_h) \right\rangle_{\mathbf{W}, \mathbf{W}'} & = & 
\left\langle \blambda - \bmu_h, (\bu_h^+ - \bu^-_h) - (\bv_h^+ - \bv^-_h) \right\rangle_{\mathbf{W}, \mathbf{W}'} \quad \forall \bmu_h \in \mathbf{W}_h.
\end{eqnarray*}
Thus, for $(\bv^+_h,\bv^-_h) \in \mathbf{\mathbb{V}}_h^0$ and $\bmu_h \in \mathbf{W}_h$, the inequality~\eqref{ineqcea} becomes
\begin{eqnarray*}
\alpha \left( \left\| \bw^{+}_h \right\|^2_{\mathbf{V}^{+}} + 
 \left\| \bw^{-}_h \right\|^2_{\mathbf{V}^{-}} \right)  & \leq &
   a_0^{+}(\bu^{+} - \bv^{+}_h, \bw^{+}_h ) +
  a_0^{-}(\bu^{-} - \bv^{-}_h, \bw^{-}_h ) +
\left\langle \blambda - \bmu_h, \bw_h^+  - \bw^-_h \right\rangle_{\mathbf{W}, \mathbf{W}'},
\\
\left\| \bw^{+}_h \right\|^2_{\mathbf{V}^{+}} + 
 \left\| \bw^{-}_h \right\|^2_{\mathbf{V}^{-}} & \leq & C\left(
\left\|\bu^{+} - \bv^{+}_h \right\|_{\mathbf{V}^+} + \left\|\bu^{-} - \bv^{-}_h \right\|_{\mathbf{V}^-}
+ \left\| \blambda - \bmu_h \right\|_{\mathbf{W}}
 \right) 
% \\ & &  \times 
 \left( \left\| \bw^+_h \right\|_{\mathbf{V}^+} + \left\| \bw^-_h \right\|_{\mathbf{V}^-}
 \right),
\end{eqnarray*}
and, recalling that $\bw^{\pm}_h = \bu^{\pm}_h - \bv_h^{\pm}$, we obtain \eqref{estapriori}.
%if we assume that $\bw_h^{\pm} := \bu^{\pm}_h - \bv^{\pm}_h$ is not equal to $0$. 
%By summing the two inequalities above which correspond to the symbols $+$ and $-$, we get
%\begin{eqnarray*}
% \| \bw^{+}_h \|_{\mathbf{V}^{+}} + 
% \| \bw^{-}_h \|_{\mathbf{V}^{-}}  & \leq &
%C\left(\|\bu^{+} - \bv^{+}_h\|_{\mathbf{V}^{+}} + \|\bu^{-} - \bv^{-}_h\|_{\mathbf{V}^{-}} \right)  + 
%\left\langle \blambda, \frac{\bw^{+}_h}{\|\bw^{+}_h\|_{\mathbf{V}^{+}}} - \frac{\bw^{-}_h}{\| \bw^{-}_h \|_{\mathbf{V}^-}} \right\rangle_{\mathbf{W}; \mathbf{W}'}.
%\end{eqnarray*}
%by denoting $\bw_h^{\pm} = \bu^{\pm}_h - \bv^{\pm}_h$.
\end{proof}

These results show us that, under Hypothesis $(\mathbf{H})$, Problem~\eqref{discfv} admits a unique solution, which satisfies the {\it a priori} estimate~\eqref{estapriori}. We have no such estimate for the approximated multiplier $\blambda_h$.\\
Besides, the estimation of the convergence rate for the primal unknown given in \cite{HaslR} for the Poisson problem can be transposed to our elasticity problem. Proposition 3 of \cite{HaslR} - page 1480 - gives an order of convergence at least equal to $1/2$. It can be adapted to our case as follows.

\begin{proposition}
Assume Hypothesis $(\mathbf{H})$. Let $(\bu^+, \bu^-, \blambda)$ be the solution of Problem~\eqref{pbfv0} such that $\bu^{\pm} \in \mathbf{H}^{d/2+1+\varepsilon}(\Omega^{\pm}) \cap \mathbf{V}^{\pm}$ for some $\varepsilon > 0$. Assume that
\begin{eqnarray*}
\inf_{\bmu_h \in \mathbf{W}_h} \left\| \blambda - \bmu_h \right\|_{\mathbf{W}} & \leq & Ch^{\delta}
\end{eqnarray*}
for some $\delta \geq 1/2$. Then we have
\begin{eqnarray*}
\left\| \bu^{\pm} - \bu_h^{\pm} \right\|_{\mathbf{V}^{\pm}} & \leq & C\sqrt{h}.
\end{eqnarray*}
\end{proposition}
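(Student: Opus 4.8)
The plan is to insert the two hypotheses into the abstract estimate~\eqref{estapriori} and to show that each of its two infima is $O(\sqrt h)$. The multiplier term is immediate: the standing assumption $\inf_{\bmu_h\in\mathbf{W}_h}\|\blambda-\bmu_h\|_{\mathbf{W}}\leq Ch^{\delta}$ together with $\delta\geq 1/2$ (and $h$ bounded) gives $\inf_{\bmu_h}\|\blambda-\bmu_h\|_{\mathbf{W}}\leq C\sqrt h$. Everything therefore reduces to bounding the constrained best-approximation $\inf_{(\bv_h^+,\bv_h^-)\in\mathbb{V}^0_h}\big(\|\bu^+-\bv_h^+\|_{\mathbf{V}^+}+\|\bu^--\bv_h^-\|_{\mathbf{V}^-}\big)$ by $C\sqrt h$; this is the elasticity counterpart of Proposition~3 of~\cite{HaslR} and is the only genuinely new point.

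The route I would take for this term is a Fortin-type reduction to the \emph{unconstrained} best approximation. Since $(\bu^+,\bu^-)$ solves~\eqref{pbfv0} it is continuous across $\Gamma_0$, i.e. $\bu^+=\bu^-$ there, so $b_0(\bu^+-\bu^-,\bmu_h)=0$ for every $\bmu_h\in\mathbf{W}_h$; thus $(\bu^+,\bu^-)$ lies in the continuous kernel. Denoting by $\beta_h$ the discrete inf-sup constant of $b_0$ on $\mathbf{V}^+_h\times\mathbf{V}^-_h\times\mathbf{W}_h$ (which Hypothesis $(\mathbf{H})$ guarantees to be strictly positive for each $h$), the standard argument applies: given any $(\tilde\bu_h^+,\tilde\bu_h^-)$ in the full discrete space, its constraint defect satisfies $b_0(\tilde\bu_h^+-\tilde\bu_h^-,\bmu_h)=b_0\big((\tilde\bu_h^+-\bu^+)-(\tilde\bu_h^--\bu^-),\bmu_h\big)$, and one solves $b_0(\bz_h^+-\bz_h^-,\bmu_h)=-\,b_0(\tilde\bu_h^+-\tilde\bu_h^-,\bmu_h)$ for a correction $(\bz_h^+,\bz_h^-)$ with $\|\bz_h^{\pm}\|_{\mathbf{V}^{\pm}}\leq\beta_h^{-1}\|b_0\|\,\big(\|\bu^+-\tilde\bu_h^+\|_{\mathbf{V}^+}+\|\bu^--\tilde\bu_h^-\|_{\mathbf{V}^-}\big)$. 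Then $(\tilde\bu_h^{\pm}-\bz_h^{\pm})\in\mathbb{V}^0_h$, and the triangle inequality yields $\inf_{\mathbb{V}^0_h}(\cdots)\leq(1+\beta_h^{-1}\|b_0\|)\inf_{(\bv_h^+,\bv_h^-)}(\cdots)$ over the full space, with $\|b_0\|$ bounded by the trace theorem.

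It then remains to feed in an interpolant and the rate of $\beta_h$. For the interpolant I would use the regularity: $d/2+1+\varepsilon$ lies above the embedding threshold, so $\mathbf{H}^{d/2+1+\varepsilon}(\Omega^{\pm})\hookrightarrow\mathcal{C}^1$, and after a continuous Sobolev extension of each $\bu^{\pm}$ to $\R^d$ the nodal interpolant $\tilde\bu_h^{\pm}\in\tilde{\mathbf{V}}_h$ is well defined on the unfitted Cartesian mesh even though some nodes fall outside $\overline{\Omega^{\pm}}$; restricting to $\Omega^{\pm}$, classical estimates give $\|\bu^{\pm}-\tilde\bu_h^{\pm}\|_{\mathbf{V}^{\pm}}\leq Ch$. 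Hence the full-space best approximation is $O(h)$, and the whole difficulty is concentrated in the factor $\beta_h^{-1}$.

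The main obstacle is precisely the quantitative behaviour of $\beta_h$, which is where the mismatch between the mesh and $\Gamma_0$ enters and where the attainable order is capped at $1/2$. Hypothesis $(\mathbf{H})$ only gives $\beta_h>0$; to obtain a rate one must exploit the geometry. Because the discretization is unfitted, the fields that pair efficiently with a given $\bmu_h\in\mathbf{W}_h$ are supported in a one-element-wide strip around $\Gamma_0$, and an inverse/trace inequality over such a strip shows $\beta_h\gtrsim\sqrt h$. Balancing this against the $O(h)$ interpolation error yields $\inf_{\mathbb{V}^0_h}(\cdots)\leq C\sqrt h$, and plugging this together with the multiplier bound into~\eqref{estapriori} gives $\|\bu^{\pm}-\bu_h^{\pm}\|_{\mathbf{V}^{\pm}}\leq C\sqrt h$. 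The transfer from~\cite{HaslR} is otherwise routine: their scalar coercivity is replaced by Lemma~\ref{lemmaco} and scalar by vector interpolation, while the geometric strip estimate for $\beta_h$ — the heart of the matter, and exactly the degeneracy that motivates the augmented-Lagrangian stabilization of Section~\ref{secaug} — carries over unchanged.
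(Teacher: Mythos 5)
Your overall strategy---splitting the abstract estimate~\eqref{estapriori} into its two infima, disposing of the multiplier term via the hypothesis $\delta\geq 1/2$, and concentrating all the work on the constrained best approximation over $\mathbb{V}_h^0$---is the same as the paper's. But your treatment of that constrained infimum diverges from the paper's, and it is where the gap sits. You reduce to the unconstrained best approximation by a Fortin-type correction, which places the entire burden on a \emph{quantitative} lower bound for the discrete inf-sup constant $\beta_h$ of $b_0$ on $\mathbf{V}_h^+\times\mathbf{V}_h^-\times\mathbf{W}_h$, and you then assert $\beta_h\gtrsim\sqrt h$ ``by an inverse/trace inequality over a one-element strip.'' That assertion is not proved, does not follow from Hypothesis $(\mathbf{H})$ (which only yields $\beta_h>0$ for each fixed $h$, with no rate), and is false in general for this unfitted pairing: when $\Gamma_0$ cuts off arbitrarily small slivers of elements, a multiplier basis function supported on such a sliver pairs with an arbitrarily small part of the discrete displacement space, and $\beta_h$ degenerates in a way governed by the mesh--interface intersection rather than by $h$ alone. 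This lack of robustness is exactly what Section~\ref{secrobust} exhibits numerically and what the stabilization of Section~\ref{secaug} (with Assumption $(\mathbf{A})$ and the triple norm of Lemma~\ref{lemmainfsup}) is designed to repair; it cannot be invoked for free in the unstabilized setting of this proposition.

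The paper sidesteps the inf-sup constant entirely by \emph{exhibiting} a competitor in $\mathbb{V}_h^0$: following Proposition 3 of \cite{HaslR}, one takes for $(\bv_h^+,\bv_h^-)$ standard interpolants of cut-off modifications $(1-\eta_h)\bu^{\pm}$ of the exact solution, where $\eta_h$ is a cut-off equal to $1$ on a band of width $3h/2$ covering every convex met by $\Gamma_0$. The jump $\bv_h^+-\bv_h^-$ then vanishes identically on all elements intersected by $\Gamma_0$, so membership in $\mathbb{V}_h^0$ holds for trivial reasons, independently of how the interface cuts the mesh; the price of the cut-off is precisely the $O(\sqrt h)$ loss coming from the $O(h)$ measure of the band (the $\mathcal{C}^1$ regularity afforded by $\bu^{\pm}\in\mathbf{H}^{d/2+1+\varepsilon}(\Omega^{\pm})$ is what controls the cut-off terms there). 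Your unconstrained $O(h)$ interpolation step and your handling of the multiplier term are fine; to complete your route you would have to either prove the $\sqrt h$ bound on $\beta_h$ under an additional geometric hypothesis on the mesh--interface intersection, or replace the Fortin reduction by the explicit cut-off construction.
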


\begin{proof}
As it is shown in the proof of Proposition 3 of \cite{HaslR}, page 1481, for any $\bu^{\pm} \in  \mathbf{H}^{d/2+1+\varepsilon}(\Omega^{\pm}) \cap \mathbf{V}^{\pm}$, we can show that there exists two finite element interpolating functions $(\bv_h^{+},\bv_h^-) \in \mathbf{\mathbb{V}}_h^{0}$ such that
\begin{eqnarray}
\left\| \bu^{\pm} - \bv_h^{\pm}\right\|_{\mathbf{V}^{\pm}} & \leq & C\sqrt{h}. \label{estsqrt}
\end{eqnarray}
Such functions $\bv_h^{\pm}$ can be constructed as standard interpolating vectors of $(1-\eta_h)\bu^{\pm}$, where $\eta_h$ is a cut-off function equal to $1$ in a vicinity of the boundary $\Gamma_0$. More precisely, in a band of width $3h/2$, $\bv_h^{+}-\bv_h^-$ vanishes on all the convexes intersected by $\Gamma_0$. So we can be sure that $\bv^{+}_h - \bv^-_h$ vanishes on $\Gamma_0$, and that $(\bv_h^{+},\bv_h^-) \in \mathbf{\mathbb{V}}_h^{0}$. The announced estimate can then be deduced from~\eqref{estapriori} combined with~\eqref{estsqrt} and the hypothesis on the interpolating functions $\bmu_h$ in the statement of this proposition.
\end{proof}
%\subsection{Matrix formulation} \label{secmat0}
This theoretical limitation of the order of convergence is very common for fictitious domain methods (without special treatments as those provided by stabilization techniques). However, the numerical tests provided in Section~\ref{secexp0} indicate that this rate of convergence is not sharp enough; Indeed, a test which would show the optimality of this estimate is hard to provide: It has been provided in \cite{HaslR} in a very specific configuration, and merely not found in \cite{Court}. In our case, we are not able to observe numerically this limitation when the outer boundary $\p \Omega$ fits the mesh, but we observe it easily when this boundary and the Dirichlet condition we impose on it are taken into account with a fictitious domain method in addition to the conditions considered on $\Gamma$. We do not comment furthermore this technical point, since it is beyond the scope of this paper.

\subsection{Stabilization technique} \label{secsuperstab} \label{secaug} \label{secinfsup}

\paragraph{The augmented Lagrangian technique} \hfill \\
%\label{secaug}
In order to enforce convergence on the multiplier $\blambda$ for all fracture configurations, we develop a stabilization method \`a la {\it Barbosa-Hughes} \cite{Barbosa1, Barbosa2}; The idea is to use the Lagrangian functional $\mathcal{L}_0$ given in~\eqref{L0} with an additional term which takes into account the required value for $\blambda$. Thus we now consider the Lagrangian functional below
\begin{eqnarray}
\mathcal{L}(\bu^+, \bu^-, \blambda) & = & \mathcal{L}_0(\bu^+, \bu^-, \blambda)
- \frac{\gamma}{2} \int_{\Gamma_0} \left| \blambda + \sigma_L(\bu^+)\bn^+\right|^2 \d\Gamma_0
- \frac{\gamma}{2} \int_{\Gamma_0} \left| \blambda - \sigma_L(\bu^-)\bn^-\right|^2 \d\Gamma_0. \label{expLL}
\end{eqnarray}
Observe that this extended Lagrangian is equal to the previous one for an exact solution. The additional quadratic term tends to force $\blambda$ to reach the wanted value of the surface force $\sigma_L(\bu^-)\bn^- = -\sigma_L(\bu^+)\bn^+$ (see Remark~\ref{remarkcont}). The parameter $\gamma > 0$ indicates the importance given to this requirement. This additional term deteriorates the positivity of the Lagrangian $\mathcal{L}$, and thus the coerciveness of the formulation which stems from it. As we cannot choose a too large $\gamma$, this technique is not a penalization method. This choice is discussed in Section~\ref{secgamma}.\\

A formal calculation of the first variations gives us
\begin{eqnarray*}
\frac{\delta \mathcal{L}}{\delta \bu^+}(\bv) & = & \frac{\delta \mathcal{L}_0}{\delta \bu^+}(\bv)
-\gamma \int_{\Gamma_0}\blambda \cdot \sigma_L(\bv)\bn^+ \d \Gamma_0
-\gamma \int_{\Gamma_0} \sigma_L(\bu^+)\bn^+  \cdot \sigma_L(\bv)\bn^+ \d \Gamma_0
,\\
\frac{\delta \mathcal{L}}{\delta \bu^-}(\bv) & = & \frac{\delta \mathcal{L}_0}{\delta \bu^-}(\bv)
+\gamma \int_{\Gamma_0}\blambda \cdot \sigma_L(\bv)\bn^- \d \Gamma_0
-\gamma \int_{\Gamma_0} \sigma_L(\bu^-)\bn^-  \cdot \sigma_L(\bv)\bn^- \d \Gamma_0
,\\
\frac{\delta \mathcal{L}}{\delta \blambda}(\bmu) & = & \frac{\delta \mathcal{L}_0}{\delta \blambda}(\bv)
-2\gamma \int_{\Gamma_0} \blambda \cdot \bmu \d \Gamma_0.
\end{eqnarray*}

The stabilized discrete formulation is then:
\begin{eqnarray}
& & \text{Find $(\bu_h^+, \bu_h^-, \blambda_h)$ in $\mathbf{V}_h^+\times \mathbf{V}_h^- \times \mathbf{W}_h$ such that} \nonumber \\
& & \left\{ \begin{array} {lcl}
\mathcal{A}^{\pm}((\bu_h^+, \bu_h^-, \blambda_h);\bv_h) = l^{\pm}(\bv_h) & \quad & \forall \bv_h \in \mathbf{V}_h^{\pm}, \\
\mathcal{B}((\bu_h^+, \bu_h^-, \blambda_h);\bmu_h) = 0, & \quad & \forall \bmu_h \in \mathbf{W}_h,
\end{array} \right. 
\label{pbfv}
\end{eqnarray}
where
\begin{eqnarray*}
\mathcal{A}^{\pm}((\bu^+, \bu^-, \blambda);\bv) & = & 
\int_{\Omega^{\pm}} \sigma_{L}(\bu^{\pm}) : \varepsilon(\bv)\d \Omega^{\pm}
\pm \int_{\Gamma_0} \blambda \cdot \bv \d\Gamma_0 \nonumber\\
& & \mp \gamma \int_{\Gamma_0}\blambda \cdot \sigma_L(\bv)\bn^{\pm} \d \Gamma_0
-\gamma \int_{\Gamma_0} \sigma_L(\bu^{\pm})\bn^{\pm}  \cdot \sigma_L(\bv)\bn^{\pm} \d \Gamma_0, \nonumber \\
\mathcal{B}((\bu^+, \bu^-, \blambda);\bmu) & = & 
 \int_{\Gamma_0} \bmu \cdot \bu^+ \d \Gamma_0 -\int_{\Gamma_0} \bmu \cdot \bu^- \d \Gamma_0 \nonumber \\
& &  - \gamma \int_{\Gamma_0}\bmu \cdot \sigma_L(\bu^+)\bn^{+} \d \Gamma_0
 + \gamma \int_{\Gamma_0}\bmu \cdot \sigma_L(\bu^-)\bn^{-} \d \Gamma_0
 -2\gamma \int_{\Gamma_0} \blambda \cdot \bmu \d \Gamma_0. \nonumber
\end{eqnarray*}

\paragraph{The discrete Inf-Sup condition} \hfill \\
%\label{secinfsup}
Let us choose $\gamma = \gamma_0 h$, where $\gamma_0 >0$ is independent of $h$. Note that Problem~\eqref{pbfv} can be rewritten as follows:
\begin{eqnarray*}
& & \text{Find } (\bu_h^+,\bu_h^-, \blambda_h) \in \mathbf{V}^+_h \times \mathbf{V}_h^- \times \mathbf{W}_h \text{ such that} \\
& & \mathcal{M}((\bu_h^+,\bu_h^-, \blambda_h) ; (\bv_h^+,\bv_h^-, \bmu_h)) =
\mathcal{F}(\bv_h^+,\bv_h^-, \bmu_h), \quad \forall 
(\bv_h^+,\bv_h^-, \bmu_h) \in \mathbf{V}^+_h \times \mathbf{V}_h^- \times \mathbf{W}_h,
\end{eqnarray*}
where
\begin{eqnarray*}
& & \mathcal{M}((\bu^+,\bu^-, \blambda) ; (\bv^+,\bv^-, \bmu)) = 
 \int_{\Omega^+} \sigma_L(\bu^+):\varepsilon(\bv^+)\d\Omega^+ +
 \int_{\Omega^-} \sigma_L(\bu^-):\varepsilon(\bv^-)\d\Omega^- \\
& & + \int_{\Gamma_0} \blambda \cdot ( \bv^+-\bv^-) \d\Gamma_0 +
\int_{\Gamma_0} \bmu \cdot ( \bu^+ -\bu^-) \d\Gamma_0 \\
& & -\gamma_0 h \int_{\Gamma_0}(\blambda + \sigma_L(\bu^+)\bn^+ ) \cdot 
(\bmu + \sigma_L(\bv^+)\bn^+ )\d \Gamma_0 
 -\gamma_0 h \int_{\Gamma_0}(\blambda - \sigma_L(\bu^-)\bn^- ) \cdot 
(\bmu - \sigma_L(\bv^-)\bn^-)\d \Gamma_0, \\
& & \mathcal{F}(\bv^+,\bv^-, \bmu)  =  \int_{\Omega^+} \bff \cdot \bv^+ \d \Omega^+
+ \int_{\Omega^-} \bff \cdot \bv^-\d \Omega^- 
+ \int_{\Gamma_T} \bv^+ \cdot p\bn^+ \d \Gamma_T
+ \int_{\Gamma_T} \bv^- \cdot p\bn^- \d \Gamma_T.
\end{eqnarray*}

In the following, we will need an assumption for the theoretical result stated in Lemma~\ref{lemmainfsup}:
\begin{eqnarray*}
(\mathbf{A}):  \qquad \qquad  h\left\| \sigma_L(\bv_h^{\pm})\bn^{\pm} \right\|^2_{\mathbf{L}^2(\Gamma_0)} & \leq & C\left\|\bv_h^{\pm} \right\|^2_{\mathbf{V}^{\pm}} \quad 
\forall \bv^{\pm}_h \in \mathbf{V}^{\pm}_h.
\end{eqnarray*}
This assumption is of same type as the ones considered in \cite{HaslR} (page 1482, the equation (27)) for the Poisson problem, or in \cite{Court} (page 84, Assumption $\mathbf{A1}$ stated in Section 4.2) for the Stokes problem, both for the theoretical study of the fictitious domain approach stabilized \`a la Barbosa-Hughes.\\

We will also use an estimate for the $\L^2$-orthogonal projection from $\mathbf{H}^{1/2}(\Gamma_0)$ on $\mathbf{W}_h$. It has been established and proved in \cite{Court} (page 85, Lemma 3):

\begin{lemma} \label{lemmaproj}
For all $\bv \in \mathbf{H}^{1/2}(\Gamma_0)$ we have
\begin{eqnarray*}
\left\| \mathcal{P}_h \bv - \bv \right\|_{\mathbf{L}^2(\Gamma_0)} & \leq & 
C \sqrt{h}\left\| \bv \right\|_{\mathbf{H}^{1/2}(\Gamma_0)},
\end{eqnarray*}
where $\mathcal{P}_h$ denotes the $L^2$-orthogonal projection from $\mathbf{H}^{1/2}(\Gamma_0)$ on $\mathbf{W}_h$.
\end{lemma}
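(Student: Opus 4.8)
The plan is to derive the $\sqrt{h}$ rate by interpolation, viewing the error operator $\mathrm{Id} - \mathcal{P}_h$ as a linear map into $\mathbf{L}^2(\Gamma_0)$ and combining two endpoint estimates: a zeroth-order stability bound on $\mathbf{L}^2(\Gamma_0)$ and a first-order bound on $\mathbf{H}^1(\Gamma_0)$. Since $\mathbf{H}^{1/2}(\Gamma_0)$ is, up to norm equivalence, the real-interpolation space of index $1/2$ between $\mathbf{L}^2(\Gamma_0)$ and $\mathbf{H}^1(\Gamma_0)$, the interpolation theorem for linear operators then delivers exactly the exponent $h^{1/2}$.

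First I would prove the $\mathbf{L}^2$-stability bound $\|\mathcal{P}_h\bv - \bv\|_{\mathbf{L}^2(\Gamma_0)} \le \|\bv\|_{\mathbf{L}^2(\Gamma_0)}$, which is immediate from orthogonality: since $\bv - \mathcal{P}_h\bv$ is $\mathbf{L}^2(\Gamma_0)$-orthogonal to $\mathbf{W}_h \ni \mathcal{P}_h\bv$, Pythagoras gives $\|\bv - \mathcal{P}_h\bv\|^2 = \|\bv\|^2 - \|\mathcal{P}_h\bv\|^2 \le \|\bv\|^2$. Next I would establish the first-order estimate $\|\mathcal{P}_h\bv - \bv\|_{\mathbf{L}^2(\Gamma_0)} \le Ch\,\|\bv\|_{\mathbf{H}^1(\Gamma_0)}$ for $\bv \in \mathbf{H}^1(\Gamma_0)$. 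Here the best-approximation property of the $\mathbf{L}^2$-projection is decisive: for any competitor $\bw_h \in \mathbf{W}_h$ one has $\|\mathcal{P}_h\bv - \bv\|_{\mathbf{L}^2(\Gamma_0)} \le \|\bw_h - \bv\|_{\mathbf{L}^2(\Gamma_0)}$, so it suffices to exhibit a single element of $\mathbf{W}_h$ that approximates $\bv$ to order $h$. Since $\mathbf{W}_h$ consists of traces on $\Gamma_0$ of the background functions of $\tilde{\mathbf{W}}_h$, I would take a bulk extension of $\bv$, apply a standard Cl\'ement/Scott--Zhang quasi-interpolant on the background mesh, restrict it to $\Gamma_0$, and control the resulting $\mathbf{L}^2(\Gamma_0)$ error by combining the bulk approximation estimate with a trace inequality.

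Finally I would invoke the identity $[\mathbf{L}^2(\Gamma_0),\mathbf{H}^1(\Gamma_0)]_{1/2} = \mathbf{H}^{1/2}(\Gamma_0)$ together with the fact that a linear operator mapping $\mathbf{L}^2$ into $\mathbf{L}^2$ with norm $\le M_0$ and $\mathbf{H}^1$ into $\mathbf{L}^2$ with norm $\le M_1$ maps the $\theta$-interpolate into $\mathbf{L}^2$ with norm $\le C\,M_0^{1-\theta}M_1^{\theta}$. Taking $\mathrm{Id} - \mathcal{P}_h$, $\theta = 1/2$, $M_0 \le 1$ and $M_1 \le Ch$ yields $\|\mathcal{P}_h\bv - \bv\|_{\mathbf{L}^2(\Gamma_0)} \le C\sqrt{h}\,\|\bv\|_{\mathbf{H}^{1/2}(\Gamma_0)}$, which is the claim.

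The interpolation machinery is routine, so the main obstacle will be the first-order endpoint estimate in this fictitious-domain setting. The difficulty is that $\mathbf{W}_h$ is \emph{not} a finite element space built on a conforming mesh of $\Gamma_0$: the background triangles are cut arbitrarily by $\Gamma_0$, so the "elements" seen by $\Gamma_0$ are not shape-regular and the usual scaling and trace arguments posed on $\Gamma_0$ do not apply directly. The remedy is to keep every estimate on the shape-regular background mesh — approximate a bulk extension of $\bv$ by the bulk quasi-interpolant and transfer the error to $\mathbf{L}^2(\Gamma_0)$ through a trace inequality on the uncut background elements — so that the $O(h)$ rate is inherited from the volume mesh rather than proved intrinsically on the cut interface. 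A secondary point to verify is the interpolation-space identity itself: since $\Gamma_0$ is only a piece of the fracture extension and is moreover not connected, one needs $\Gamma_0$ to be regular enough (for instance Lipschitz) for $\mathbf{H}^{1/2}(\Gamma_0)$ to coincide with the real interpolant, a regularity hypothesis that is consistent with the strong-regularity framework adopted throughout.
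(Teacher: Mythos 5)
The paper does not prove this lemma itself: it imports it verbatim from \cite{Court} (Lemma~3 there), whose proof is a one-step direct argument. One writes $\left\| \mathcal{P}_h \bv - \bv \right\|_{\mathbf{L}^2(\Gamma_0)} \leq \left\| r_h\tilde{\bv} - \bv \right\|_{\mathbf{L}^2(\Gamma_0)}$ by best approximation, where $\tilde{\bv} \in \mathbf{H}^1(\Omega)$ is a lift of $\bv$ with $\|\tilde{\bv}\|_{\mathbf{H}^1(\Omega)} \leq C \|\bv\|_{\mathbf{H}^{1/2}(\Gamma_0)}$ and $r_h$ is a Cl\'ement-type interpolant on the background mesh; the element-wise trace inequality $\|\bw\|^2_{\mathbf{L}^2(\Gamma_0 \cap T)} \leq C ( h^{-1}\|\bw\|^2_{\mathbf{L}^2(T)} + h |\bw|^2_{\mathbf{H}^1(T)} )$ then gives $h^{-1}\cdot h^2 + h \cdot 1 \sim h$ per element and hence the $\sqrt{h}$ rate in one pass. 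Your route is genuinely different: two endpoint bounds for $\mathrm{Id}-\mathcal{P}_h$ plus real interpolation of operators. The $\mathbf{L}^2$ endpoint and the interpolation step are fine (granting enough regularity of $\Gamma_0$ for $[\mathbf{L}^2,\mathbf{H}^1]_{1/2,2} = \mathbf{H}^{1/2}$, which is reasonable here since $\Gamma_0$ is a constructed, piecewise-smooth extension).

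The weak point is the first-order endpoint, which is strictly stronger than the lemma and is where your plan can actually fail. Proved the way you describe (bulk quasi-interpolant plus trace inequality), it needs two things the direct proof does not. First, a lift of $\bv \in \mathbf{H}^1(\Gamma_0)$ into $\mathbf{H}^{3/2}(\Omega)$, i.e.\ more smoothness of $\Gamma_0$ than the Lipschitz regularity you invoke. Second, and more seriously, a background space $\tilde{\mathbf{W}}_h$ of polynomial degree at least one: for the P0/Q0 multiplier spaces that the paper uses in most of its numerical tests, the $\mathbf{H}^1$-seminorm of the bulk interpolation error does not decay at all, so the trace inequality yields only $h^{-1}\cdot h^2 + h\cdot 1 \sim h$ per element, i.e.\ $O(\sqrt{h})$ in $\mathbf{L}^2(\Gamma_0)$ --- not the $O(h)$ your interpolation argument requires at that endpoint. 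To rescue it for piecewise constants you would have to argue intrinsically on $\Gamma_0$ via Poincar\'e--Wirtinger on the cut pieces $\Gamma_0 \cap T$, which is exactly the on-interface reasoning you announce you will avoid (and which raises its own issues of connectedness of the cut pieces). In short: your strategy proves more than is needed and pays for it with extra hypotheses; the cited direct argument reaches $\sqrt{h}$ from the $\mathbf{H}^{1/2}$ data in a single step, works for P0 multipliers, and only needs the standard $\mathbf{H}^1(\Omega)$ lift. If you keep your route, you must either restrict to degree $\geq 1$ multiplier spaces or supply the intrinsic Poincar\'e argument for the $\mathbf{H}^1$ endpoint.
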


The main purpose of this subsection is to prove the following inf-sup condition.

\begin{lemma} \label{lemmainfsup}
Assume that $(\mathbf{A})$ holds. For $\gamma_0$ small enough, there exists $C>0$ independent of $h$ such that:
\begin{eqnarray*}
\inf_{\left(\bu_h^+,\bu_h^-, \blambda_h\right) \in \mathbf{V}^+_h \times \mathbf{V}_h^- \times \mathbf{W}_h} \sup_{\left(\bv_h^+,\bv_h^-, \bmu_h\right)\in \mathbf{V}^+_h \times \mathbf{V}_h^- \times \mathbf{W}_h} \frac{\mathcal{M}((\bu_h^+,\bu_h^-, \blambda_h) ; (\bv_h^+,\bv_h^-, \bmu_h))}
{\left|\left|\left|\bu_h^+,\bu_h^-, \blambda_h\right|\right|\right| \ 
\left|\left|\left|\bv_h^+,\bv_h^-, \bmu_h\right|\right|\right|}
& \geq & C.
\end{eqnarray*}
where the norm $||| \cdot |||$ is defined as follows:
\begin{eqnarray*}
\left|\left|\left|\bu^+,\bu^-, \blambda\right|\right|\right|^2 & = & 
\left\| \bu^+\right\|^2_{\mathbf{V}^+} + \left\| \bu^-\right\|^2_{\mathbf{V}^-}
 + \frac{1}{h} \left\|\bu^+ - \bu^- \right\|^2_{\mathbf{L}^2(\Gamma_0)}
 + h\left\|\blambda \right\|_{\mathbf{L}^2(\Gamma_0)}^2 .
%& & + h\| \sigma_L(\bu^+)\bn^+ \|_{\mathbf{L}^2(\Gamma_0)}^2 + h\| \sigma_L(\bu^-)\bn^- \|_{\mathbf{L}^2(\Gamma_0)}^2 + h\|\blambda \|_{\mathbf{L}^2(\Gamma_0)}^2 .
\end{eqnarray*}
\end{lemma}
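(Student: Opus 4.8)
The plan is to prove the Inf-Sup condition by the classical Fortin-type construction: for an arbitrary triple $(\bu_h^+,\bu_h^-,\blambda_h)$ I exhibit an explicit test triple $(\bv_h^+,\bv_h^-,\bmu_h)$ such that $\mathcal{M}((\bu_h^+,\bu_h^-,\blambda_h);(\bv_h^+,\bv_h^-,\bmu_h)) \geq C\,|||\bu_h^+,\bu_h^-,\blambda_h|||^2$ while at the same time $|||\bv_h^+,\bv_h^-,\bmu_h||| \leq C'\,|||\bu_h^+,\bu_h^-,\blambda_h|||$; dividing the two estimates yields the claim. The test triple will be a combination of two elementary directions, following the strategy used in \cite{HaslR} and \cite{Court}.

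The first direction is the diagonal one, $(\bv_h^+,\bv_h^-,\bmu_h) = (\bu_h^+,\bu_h^-,-\blambda_h)$. With this choice the two coupling integrals $\int_{\Gamma_0}\blambda_h\cdot(\bv_h^+-\bv_h^-)$ and $\int_{\Gamma_0}\bmu_h\cdot(\bu_h^+-\bu_h^-)$ cancel, and each stabilization bracket collapses into a difference of squares, leaving the surface contribution $2\gamma_0 h\|\blambda_h\|^2_{\mathbf{L}^2(\Gamma_0)} - \gamma_0 h\big(\|\sigma_L(\bu_h^+)\bn^+\|^2_{\mathbf{L}^2(\Gamma_0)} + \|\sigma_L(\bu_h^-)\bn^-\|^2_{\mathbf{L}^2(\Gamma_0)}\big)$. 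Invoking Lemma~\ref{lemmaco} for the two bulk terms and absorbing the two surface norms of $\sigma_L(\bu_h^{\pm})\bn^{\pm}$ into $\|\bu_h^{\pm}\|^2_{\mathbf{V}^{\pm}}$ via assumption $(\mathbf{A})$, this direction produces, for $\gamma_0$ small enough, a lower bound of the form $(\alpha - C\gamma_0)\big(\|\bu_h^+\|^2_{\mathbf{V}^+}+\|\bu_h^-\|^2_{\mathbf{V}^-}\big) + 2\gamma_0 h\|\blambda_h\|^2_{\mathbf{L}^2(\Gamma_0)}$, i.e. control of the elastic energy and of the multiplier term $h\|\blambda_h\|^2$. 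It does not yet see the jump term $\tfrac1h\|\bu_h^+-\bu_h^-\|^2$.

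The second direction recovers the jump: I take $(\bv_h^+,\bv_h^-,\bmu_h) = (0,0,\bmu_h^{\ast})$ with $\bmu_h^{\ast} = \tfrac1h\,\mathcal{P}_h(\bu_h^+-\bu_h^-) \in \mathbf{W}_h$. The leading term is then $\tfrac1h\|\mathcal{P}_h(\bu_h^+-\bu_h^-)\|^2_{\mathbf{L}^2(\Gamma_0)}$, while the surviving stabilization part pairs $\bmu_h^{\ast}$ against $2\blambda_h + \sigma_L(\bu_h^+)\bn^+ - \sigma_L(\bu_h^-)\bn^-$; by Cauchy--Schwarz, assumption $(\mathbf{A})$ and Young's inequality these cross terms are split, one part absorbed into the quantities already controlled by the first direction and the other part back into $\tfrac1h\|\mathcal{P}_h(\bu_h^+-\bu_h^-)\|^2$. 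The crucial point, that the triple norm contains the \emph{full} jump and not merely its $\mathbf{W}_h$-projection, is settled independently: writing $\bu_h^+-\bu_h^- = \mathcal{P}_h(\bu_h^+-\bu_h^-) + (I-\mathcal{P}_h)(\bu_h^+-\bu_h^-)$, Lemma~\ref{lemmaproj} gives $\|(I-\mathcal{P}_h)(\bu_h^+-\bu_h^-)\|_{\mathbf{L}^2(\Gamma_0)} \leq C\sqrt{h}\,\|\bu_h^+-\bu_h^-\|_{\mathbf{H}^{1/2}(\Gamma_0)}$, so the trace theorem yields $\tfrac1h\|(I-\mathcal{P}_h)(\bu_h^+-\bu_h^-)\|^2_{\mathbf{L}^2(\Gamma_0)} \leq C\big(\|\bu_h^+\|^2_{\mathbf{V}^+}+\|\bu_h^-\|^2_{\mathbf{V}^-}\big)$. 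Thus the component of the jump orthogonal to $\mathbf{W}_h$ is already dominated by the elastic energy, and only its projection needs to be generated by the second direction.

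Finally, I choose $(\bv_h^+,\bv_h^-,\bmu_h) = (\bu_h^+,\bu_h^-,-\blambda_h+\theta\bmu_h^{\ast})$ for a small fixed $\theta>0$, i.e. the first direction plus $\theta$ times the second. By linearity the two lower bounds add; taking $\theta$ small enough keeps every coefficient positive once the cross terms are absorbed, and the jump decomposition above lets me replace $\tfrac1h\|\mathcal{P}_h(\cdot)\|^2$ by $\tfrac1h\|\bu_h^+-\bu_h^-\|^2$ at the cost of an extra energy term that the still-positive energy coefficient swallows. This gives $\mathcal{M}((\bu_h^+,\bu_h^-,\blambda_h);(\bv_h^+,\bv_h^-,\bmu_h)) \geq C\,|||\bu_h^+,\bu_h^-,\blambda_h|||^2$. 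The continuity bound $|||\bv_h^+,\bv_h^-,\bmu_h||| \leq C'\,|||\bu_h^+,\bu_h^-,\blambda_h|||$ is immediate, since $h\|\bmu_h^{\ast}\|^2_{\mathbf{L}^2(\Gamma_0)} = \tfrac1h\|\mathcal{P}_h(\bu_h^+-\bu_h^-)\|^2_{\mathbf{L}^2(\Gamma_0)} \leq \tfrac1h\|\bu_h^+-\bu_h^-\|^2_{\mathbf{L}^2(\Gamma_0)}$, $\mathcal{P}_h$ being an $\mathbf{L}^2$-contraction. I expect the main obstacle to be twofold: controlling the full $\mathbf{L}^2$-jump when the form only ever exposes its projection (handled by Lemma~\ref{lemmaproj} together with the trace theorem), and the delicate bookkeeping of constants, namely choosing $\gamma_0$ small for coercivity of the diagonal direction and then $\theta$ small relative to the resulting constants so that adding the jump direction preserves positivity while absorbing all cross terms by Young's inequality and assumption $(\mathbf{A})$.
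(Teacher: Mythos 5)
Your proposal is correct and follows essentially the same route as the paper's own proof: the diagonal test triple $(\bu_h^+,\bu_h^-,-\blambda_h)$ combined with $\eta$ times $(0,0,\tfrac{1}{h}\mathcal{P}_h(\bu_h^+-\bu_h^-))$, coercivity via Lemma~\ref{lemmaco} and assumption $(\mathbf{A})$, recovery of the full jump from its projection via Lemma~\ref{lemmaproj} and the trace theorem, and the continuity bound from the $\mathbf{L}^2$-contractivity of $\mathcal{P}_h$. No substantive differences from the paper's argument.
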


\begin{proof}
Ideas for this proof are similar to the ones used in \cite{HaslR} (page 1482, Lemma 3) or \cite{Court} (page 85, Lemma 4), and inspired by \cite{Stenberg} (page 144, Lemma 6). First, we estimate
\begin{eqnarray*}
\mathcal{M}((\bu_h^+,\bu_h^-, \blambda_h);(\bu_h^+,\bu_h^-, -\blambda_h)) & = &
a^+_0(\bu_h^+,\bu_h^+) + a_0^-(\bu_h^-,\bu_h^-)  +2\gamma_0h \int_{\Gamma_0} \left|\blambda_h \right|^2\d \Gamma_0\\ & &
 - \gamma_0h\int_{\Gamma_0} \left|\sigma_L(\bu^+_h)\bn^+\right|^2\d\Gamma_0 
 - \gamma_0h\int_{\Gamma_0} \left|\sigma_L(\bu^-_h)\bn^-\right|^2\d\Gamma_0 \\ 
 & \geq & \alpha \left( \left\| \bu^+_h \right\|^2_{\mathbf{V}^+} + \left\| \bu^-_h \right\|^2_{\mathbf{V}^-} \right)
 +2\gamma_0h \int_{\Gamma_0} \left|\blambda_h \right|^2\d \Gamma_0
 \\
& &   -C\gamma_0 \left( \left\| \bu^+_h \right\|^2_{\mathbf{V}^+} + \left\| \bu^-_h \right\|^2_{\mathbf{V}^-} \right) \\
 & \geq & \frac{\alpha}{2} \left( \left\| \bu^+_h \right\|^2_{\mathbf{V}^+} + 
 \left\| \bu^-_h \right\|^2_{\mathbf{V}^-} \right)  
 +2\gamma_0h  \left\|\blambda_h \right\|_{\mathbf{L}^2(\Gamma_0)}^2,
\end{eqnarray*}
by using Lemma~\ref{lemmaco} and Assumption $(\mathbf{A})$, and then by choosing $\gamma_0$ small enough. 
%Now, for $(\bv_{h,0}^+, \bv^-_{h,0}) \in \mathbf{\mathbb{V}}_h^0$, let us estimate
%\begin{eqnarray*}
%\mathcal{M}((\bu_h^+,\bu_h^-, \blambda_h);(\bv_{h,0}^+, \bv^-_{h,0},0)) & = &
%  \int_{\Omega^+} \sigma_L(\bu_h^+):\varepsilon(\bv_{h,0}^+)\d\Omega^+ +
%  \int_{\Omega^-} \sigma_L(\bu_h^-):\varepsilon(\bv_{h,0}^-)\d\Omega^- \\
%& & -\gamma_0h\int_{\Gamma_0}(\blambda_h + \sigma_L(\bu^+_h)\bn^-)\cdot \sigma_L(\bv_{h,0}^+)\bn^+ \d \Gamma_0 \\
%& & -\gamma_0h\int_{\Gamma_0}(\blambda_h - \sigma_L(\bu^-_h)\bn^-)\cdot \sigma_L(\bv_{h,0}^-)\bn^- \d \Gamma_0 \\
%& \geq & - C\beta\left( \| \bu^+_h \|^2_{\mathbf{V}^+} + \| \bu^-_h \|^2_{\mathbf{V}^-} \right) - \frac{C}{\beta} \left( \| \bv^+_{h,0} \|^2_{\mathbf{V}^+} + \| \bv^-_{h,0} \|^2_{\mathbf{V}^-} \right) \\
%& & 
%-C\beta\gamma_0 h\left( \| \blambda_h + \sigma_L(\bu^+_h)\bn^+ \|^2_{\mathbf{L}^2(\Gamma_0)}
%+ \| \blambda_h - \sigma_L(\bu^-_h)\bn^- \|^2_{\mathbf{L}^2(\Gamma_0)} \right)\\
%& & -\frac{C\gamma_0}{\beta} h \left( \| \sigma_L(\bv^+_{h,0})\bn^+ \|^2_{\mathbf{L}^2(\Gamma_0)}
%+ \|\sigma_L(\bv^-_{h,0})\bn^- \|^2_{\mathbf{L}^2(\Gamma_0)}
%\right) \\
%\end{eqnarray*}
%We have used here the Young inequality while introducing some parameter $\beta > 0$ which will be chosen large enough later in the proof. Using Assumption ($\mathbf{A}$), this inequality leads us to
%\begin{eqnarray*}
%\mathcal{M}((\bu_h^+,\bu_h^-, \blambda_h);(\bv_{h,0}^+, \bv^-_{h,0},0)) & \geq & 
%\end{eqnarray*}
Next, let us consider $\overline{\bmu}_h = \frac{1}{h}\mathcal{P}_h(\bu_h^+ - \bu_h^-)$. In view of Assumption $(\mathbf{A})$, the Cauchy-Schwarz and Young inequalities, we have
\begin{eqnarray*}
\mathcal{M}((\bu_h^+,\bu_h^-, \blambda_h);(0,0,\overline{\bmu}_h)) & = &
\frac{1}{h}\left\| \mathcal{P}_h(\bu_h^+ - \bu_h^-) \right\|^2_{\mathbf{L}^2(\Gamma_0)} 
 - \gamma_0 h \int_{\Gamma_0} \sigma_L(\bu^+)\bn^+ \cdot \overline{\bmu}_h \d \Gamma_0 \\
& &  + \gamma_0 h \int_{\Gamma_0} \sigma_L(\bu^-)\bn^- \cdot \overline{\bmu}_h \d \Gamma_0 -2\gamma_0 h \int_{\Gamma_0} \blambda_h \cdot \overline{\bmu}_h \d \Gamma_0 \\
& \geq & \frac{1}{h}\left\| \mathcal{P}_h(\bu_h^+ - \bu_h^-) \right\|^2_{\mathbf{L}^2(\Gamma_0)}
-2\gamma_0 \sqrt{h} \left\| \blambda_h \right\|_{\mathbf{L}^2(\Gamma_0)} \sqrt{h}
\left\| \overline{\bmu}_h \right\|_{\mathbf{L}^2(\Gamma_0)} \\
& & - \gamma_0 \left( \sqrt{h} \left\| \sigma_L(\bu^+)\bn^+ \right\|_{\mathbf{L}^2(\Gamma_0)} + 
\sqrt{h} \left\| \sigma_L(\bu^-)\bn^- \right\|_{\mathbf{L}^2(\Gamma_0)}
 \right)\sqrt{h}
 \left\| \overline{\bmu}_h \right\|_{\mathbf{L}^2(\Gamma_0)} \\
 & \geq & \frac{1}{h}\left\| \mathcal{P}_h(\bu_h^+ - \bu_h^-) \right\|^2_{\mathbf{L}^2(\Gamma_0)}
 - \gamma_0 h \left\| \blambda_h \right\|^2_{\mathbf{L}^2(\Gamma_0)}
 \\
 & &  -\frac{3\gamma_0}{2h}\left\| \mathcal{P}_h(\bu_h^+ - \bu_h^-) \right\|^2_{\mathbf{L}^2(\Gamma_0)} -C\gamma_0 \left( \left\| \bu^+_h \right\|^2_{\mathbf{V}^+} + \left\| \bu^-_h \right\|^2_{\mathbf{V}^-} \right)\\
  & \geq & \frac{1}{2h}\left\| \mathcal{P}_h(\bu_h^+ - \bu_h^-) \right\|^2_{\mathbf{L}^2(\Gamma_0)}
  - \gamma_0 h \left\| \blambda_h \right\|^2_{\mathbf{L}^2(\Gamma_0)} 
  -C\gamma_0 \left( \left\| \bu^+_h \right\|^2_{\mathbf{V}^+} + \left\| \bu^-_h \right\|^2_{\mathbf{V}^-} \right),
\end{eqnarray*}
for $\gamma_0$ small enough. Now, choosing $\left(\bv_h^+,\bv_h^-, \bmu_h\right) = \left(\bu_h^+,\bu_h^-, -\blambda_h + \eta \overline{\bmu}_h\right)$ - where $\eta > 0$ is supposed to be small enough - the previous inequalities above yield
\begin{eqnarray}
%\mathcal{M}((\bu_h^+,\bu_h^-, \blambda_h);(\bv_h^+,\bv_h^-, \bmu_h)) & = &
%\mathcal{M}((\bu_h^+,\bu_h^-, \blambda_h);(\bv_h^+,\bv_h^-, -\blambda_h)) +
%\eta\mathcal{M}((\bu_h^+,\bu_h^-, \blambda_h);(\bv_h^+,\bv_h^-, \overline{\bmu}_h)) \nonumber\\
\mathcal{M}((\bu_h^+,\bu_h^-, \blambda_h);(\bv_h^+,\bv_h^-, \bmu_h)) & \geq &
 \frac{\eta}{2h}\left\| \mathcal{P}_h(\bu_h^+ - \bu_h^-) \right\|^2_{\mathbf{L}^2(\Gamma_0)}
 +(2\gamma_0h - \eta \gamma_0 h)\left\| \blambda_h \right\|^2_{\mathbf{L}^2(\Gamma_0)} \nonumber \\
 & & +\left( \frac{\alpha}{2} - C\eta\gamma_0 \right)\left( \left\| \bu^+_h \right\|^2_{\mathbf{V}^+} + \left\| \bu^-_h \right\|^2_{\mathbf{V}^-} \right). \label{ineqproj0}
\end{eqnarray}
The projection term can be handled by using Lemma~\eqref{lemmaproj} as follows:
\begin{eqnarray}
\left\| \mathcal{P}_h(\bu_h^+ - \bu_h^-) \right\|^2_{\mathbf{L}^2(\Gamma_0)} & \geq &  
\left\| \mathcal{P}_h(\bu_h^+ - \bu_h^-) \right\|^2_{\mathbf{L}^2(\Gamma_0)} \nonumber \\
& = & \left\| \bu_h^+ - \bu_h^- \right\|^2_{\mathbf{L}^2(\Gamma_0)} - 
\left\| \mathcal{P}_h(\bu_h^+ - \bu_h^-) - (\bu_h^+ - \bu_h^-) \right\|^2_{\mathbf{L}^2(\Gamma_0)} \nonumber \\
& \geq &  \left\| \bu_h^+ - \bu_h^- \right\|^2_{\mathbf{L}^2(\Gamma_0)} - Ch
 \left\| \bu_h^+ - \bu_h^- \right\|^2_{\mathbf{H}^{1/2}(\Gamma_0)} \nonumber \\
 & \geq & \left\| \bu_h^+ - \bu_h^- \right\|^2_{\mathbf{L}^2(\Gamma_0)} - 2 Ch
 \left( \left\| \bu_h^+ \right\|^2_{\mathbf{V}^{+}} + \left\| \bu_h^- \right\|^2_{\mathbf{V}^{-}} \right). \label{ineqproj1}
\end{eqnarray}
Then, in view of~\eqref{ineqproj0} and~\eqref{ineqproj1}, we have
\begin{eqnarray*}
\mathcal{M}((\bu_h^+,\bu_h^-, \blambda_h);(\bv_h^+,\bv_h^-, \bmu_h)) & \geq &
 \left( \frac{\alpha}{2} - C\eta\gamma_0 - 2\eta C \right)\left( \left\| \bu^+_h \right\|^2_{\mathbf{V}^+} + \left\| \bu^-_h \right\|^2_{\mathbf{V}^-} \right)
 \nonumber \\
 & & +\frac{\eta}{h} \left\| \bu_h^+ - \bu_h^- \right\|^2_{\mathbf{L}^2(\Gamma_0)}
 + \gamma_0(2 - \eta )h\left\| \blambda_h \right\|^2_{\mathbf{L}^2(\Gamma_0)},
\end{eqnarray*}
and thus, recalling that $\left(\bv_h^+,\bv_h^-, \bmu_h\right) = \left(\bu_h^+,\bu_h^-, -\blambda_h + \eta \overline{\bmu}_h\right)$, and choosing $\gamma_0 >0$ and $\eta>0$ small enough we obtain
\begin{eqnarray}
\mathcal{M}((\bu_h^+,\bu_h^-, \blambda_h);(\bv_h^+,\bv_h^-, \bmu_h)) & \geq &
C\left|\left|\left| \bu_h^+,\bu_h^-, \blambda_h \right|\right|\right|^2, \label{ineqM1}
\end{eqnarray}
where $C >0$ is independent of $h$. On the other hand, we have
\begin{eqnarray}
\left|\left|\left| \bv_h^+,\bv_h^-, \bmu_h \right|\right|\right| & \leq & 
M\left|\left|\left| \bu_h^+,\bu_h^-, \blambda_h \right|\right|\right|
\label{ineqM2}
\end{eqnarray}
where $M>0$ is independent of $h$ too. Indeed,
\begin{eqnarray*}
\left|\left|\left| \bv_h^+,\bv_h^-, \bmu_h \right|\right|\right| & \leq & \left|\left|\left| \bu_h^+,\bu_h^-, \blambda_h \right|\right|\right| +
\eta \left|\left|\left| 0,0, \bmu_h \right|\right|\right| \\
& = & \left|\left|\left| \bu_h^+,\bu_h^-, \blambda_h \right|\right|\right| + \frac{\eta}{\sqrt{h}}
\left\| \mathcal{P}_h(\bu_h^+ - \bu_h^-) \right\|_{\mathbf{L}^2(\Gamma_0)} \\
& \leq & \left|\left|\left| \bu_h^+,\bu_h^-, \blambda_h \right|\right|\right| + \frac{\eta}{\sqrt{h}}
\left\| \bu_h^+ - \bu_h^- \right\|_{\mathbf{L}^2(\Gamma_0)} \\
& \leq & (1+\eta)\left|\left|\left| \bu_h^+,\bu_h^-, \blambda_h \right|\right|\right|.
\end{eqnarray*}
Thus, combining~\eqref{ineqM1} and~\eqref{ineqM2} gives the inequality
\begin{eqnarray*}
\frac{\mathcal{M}((\bu_h^+,\bu_h^-, \blambda_h);(\bv_h^+,\bv_h^-, \bmu_h))}
{\left|\left|\left| \bv_h^+,\bv_h^-, \bmu_h \right|\right|\right|} & \geq & 
\frac{c}{M}\left|\left|\left| \bu_h^+,\bu_h^-, \blambda_h \right|\right|\right|
\end{eqnarray*}
which leads to the desired result.
\end{proof}

Note that the bilinear form $\mathcal{M}$ is bounded for the triple norm on $\mathbf{V}^+ \times \mathbf{V}^- \times \mathbf{W}$ uniformly with respect to the mesh size~$h$. Then Lemma~\ref{lemmainfsup} leads to the following error estimate with a C\'ea type lemma (see \cite{Ern} for instance):
\begin{eqnarray*}
\left|\left|\left| \bu^{+} - \bu^{+}_h, \bu^{-} - \bu^{-}_h , \blambda - \blambda_h \right|\right|\right| & \leq &
C \inf_{(\bv_h^+,\bv_h^-, \bmu_h)\in \mathbf{V}^+_h \times \mathbf{V}_h^- \times \mathbf{W}_h}
\left|\left|\left| \bu^{+} - \bv^{+}_h, \bu^{-} - \bv^{-}_h , \blambda - \bmu_h \right|\right|\right|. 
%\label{ineqabst}
\end{eqnarray*}
From that we can invoke the extension theorem for the Sobolev spaces, the standard estimates for the nodal finite element interpolation operators and the trace inequality
\begin{eqnarray*}
\| \bv \|_{\mathbf{L}^2(\Gamma_0)} & \leq & C\left(
h\| \bv \|_{\mathbf{L}^2(T)}
+ \frac{1}{h} \| \bv \|_{\mathbf{L}^2(T)}
\right)
\end{eqnarray*}
on any convex $T \in \mathcal{T}_h$ (see Appendix A of \cite{HaslR}, page 1496, for more details). As a consequence, it yields the following abstract error estimate
\begin{eqnarray*}
&  & \max(\left\|\bu - \bu_h^+\right\|_{\mathbf{V}^+}, \left\|\bu - \bu_h^+\right\|_{\mathbf{V}^+}, \left\|\blambda - \blambda_h^+\right\|_{\mathbf{L}^2(\Gamma_0)} ) \leq 
\left|\left|\left| \bu^{+} - \bu^{+}_h, \bu^{-} - \bu^{-}_h , \blambda - \blambda_h \right|\right|\right| \\
& & \leq C\left( h^{k_u}\left\|\bu^+\right\|_{\mathbf{H}^{k_u+1}(\Omega^+)} +  
h^{k_u}\left\|\bu^-\right\|_{\mathbf{H}^{k_{\bu}+1}(\Omega^-)} 
+ h^{k_{\blambda}+1} \left\|\blambda\right\|_{\mathbf{H}^{k_{\blambda}+1}(\Gamma_0)}
\right),
\end{eqnarray*}
where $k_{\bu}$ and $k_{\blambda}$ are the respective degrees of standard finite elements used for the displacements $\bu_h^+$, $\bu_h^-$ and the multiplier $\blambda$.

\section{Practical remarks on the implementation} \label{secimplem}

\subsection{Matrix formulations} \label{secmatrixform}
In matrix notation, the formulation~\eqref{discfv} gives
\begin{eqnarray*}
\left( \begin{array} {cc|c}
A_0^+ & 0 & {B_0^+}^T \\
0 & A_0^- & -{B_0^-}^T \\
\hline 
B_0^+ & -B_0^+ & 0
\end{array}
\right)
\left( \begin{array} {c}
\mathbf{U}^+ \\
\mathbf{U}^- \\
\hline 
\bLambda
\end{array}
\right)
& = &
\left( \begin{array} {c}
\mathbf{F}^+ \\
\mathbf{F}^- \\
\hline 
0
\end{array}
\right),
\end{eqnarray*}
where $\mathbf{U}^+$, $\mathbf{U}^-$ and $\bLambda$ are the degrees of freedom of $\bu^+_h$, $\bu^-_h$ and $\blambda_h$ respectively. The matrices $A_0^+$, $A_0^-$, $B_0^+$ and $B_0^-$ are the discretization of~\eqref{fbil1}-\eqref{fbil2}. If we denote the basis functions of the spaces $\mathbf{V}^+_h$, $\mathbf{V}^-_h$ and $\mathbf{W}_h$ by $\left\{ \bvarphi^+_i \right\}$, $\left\{ \bvarphi^-_i \right\}$ and $\left\{ \bpsi_i \right\}$ respectively, we have:
\begin{eqnarray*}
\left( A_0^+ \right)_{ij} = \int_{\Omega^+} \sigma_{L}(\bvarphi^+_i) : \varepsilon(\bvarphi^+_j)\d \Omega^+, & \quad & 
\left( A_0^- \right)_{ij} = \int_{\Omega^-} \sigma_{L}(\bvarphi^-_i) : \varepsilon(\bvarphi^-_j)\d \Omega^-, \\
\left( B_0^+ \right)_{ij} = \int_{\Gamma_0} \bpsi_i \cdot \bvarphi^+_j \d \Gamma_0, & \quad &
\left( B_0^- \right)_{ij} = \int_{\Gamma_0} \bpsi_i \cdot \bvarphi^-_j \d \Gamma_0.
\end{eqnarray*}
Vectors $\mathbf{F}^{\pm}$ are the discretization of $\eqref{Lrhs}$.\\
In matrix writing, the stabilized formulation~\eqref{pbfv} corresponds to
\begin{eqnarray}
\left( \begin{array} {cc|c}
A^+ & 0 & {B^+}^T \\
0 & A^- & -{B^-}^T \\
\hline 
B^+ & -B^+ & -C
\end{array}
\right)
\left( \begin{array} {c}
\mathbf{U}^+ \\
\mathbf{U}^- \\
\hline 
\bLambda
\end{array}
\right)
& = &
\left( \begin{array} {c}
\mathbf{F}^+ \\
\mathbf{F}^- \\
\hline 
0
\end{array}
\right). \label{systdec}
\end{eqnarray}
%where $\mathbf{U}^+$, $\mathbf{U}^-$ and $\bLambda$ have been introduced in Section~\ref{secexp0}. 
The matrices $A^{\pm}$, $B^{\pm}$ and $C$ introduced here are the respective discretizations of the following bilinear forms:
\begin{eqnarray}
a^{\pm}(\bu^{\pm},\bv) & = & \int_{\Omega^{\pm}} \sigma_{L}(\bu^{\pm}) : \varepsilon(\bv)\d \Omega^{\pm}
-\gamma \int_{\Gamma_0} \sigma_L(\bu^{\pm})\bn^{\pm}  \cdot \sigma_L(\bv)\bn^{\pm} \d \Gamma_0, \label{fbstabext1}
\\
b(\bv, \blambda) & = &  \int_{\Gamma_0} \blambda \cdot \bv \d\Gamma_0 
\mp \gamma \int_{\Gamma_0}\blambda \cdot \sigma_L(\bv)\bn^{\pm} \d \Gamma_0,
\\
c(\blambda, \bmu) & = & 2\gamma \int_{\Gamma_0} \blambda \cdot \bmu \d \Gamma_0.
\label{fbstabext3}
\end{eqnarray}

\subsection{Solving the whole system} \label{secuzawa}
Let us recall the extended stabilized formulation introduced in Section~\ref{secaug}:
\begin{eqnarray}
& & \text{Find $(\bu^+, \bu^-, \blambda)$ in $\mathbf{V}^+\times \mathbf{V}^- \times \mathbf{W}$ such that} \nonumber \\
& & \left\{ \begin{array} {lcl}
a^{+}(\bu^{+},\bv^+) = - b(\blambda, \bv^+) + l^{+}(\bv^+) & \quad & \forall \bv^+ \in \mathbf{V}^{+}, \\
a^{-}(\bu^{-},\bv^-) = b(\blambda, \bv^-) + l^{-}(\bv^-) & \quad & \forall \bv^- \in \mathbf{V}^{-}, \\
b(\bu^+,\bmu) - b(\bu^-,\bmu) - c(\blambda, \bmu) = 0, & \quad & \forall \bmu \in \mathbf{W},
\end{array} \right. \label{fvextx}
\end{eqnarray}
where the linear forms $l^{\pm}$ are given by~\eqref{Lrhs} and the bilinear forms $a^{\pm}$, $b$ and $c$ are given by~\eqref{fbstabext1}--\eqref{fbstabext3}. Recall also that in the discrete formulation~\eqref{discfv} we considered for the bilinear form~\eqref{fbil2} the inner product in $\L^2(\Gamma_0)$ instead of the duality pairing $\displaystyle \langle \ \cdot \ , \ \cdot \ \rangle_{\mathbf{W},\mathbf{W}'}$, leading to consider that $\mathbf{W} \equiv \mathbf{W}' = \mathbf{L}^2(\Gamma_0)$.\\
In view of the formulation~\eqref{fvextx}, we deduce that the mapping $\blambda \mapsto \bu^{\pm}(\blambda)$ is affine, and $\bu^{\pm}(\blambda + t\overline{\bmu}) = \bu^{\pm}(\blambda) + t \bomega^{\pm}$, where $\bomega^{\pm} \in \mathbf{V}^{\pm}$ are such that
\begin{eqnarray}
a^{+}(\bomega^{+},\bv^+) = - b(\overline{\bmu}, \bv^+) & \quad & \forall \bv^+ \in \mathbf{V}^{+}, \label{sens1}\\
a^{-}(\bomega^{-},\bv^-) = b(\overline{\bmu}, \bv^-) & \quad & \forall \bv^- \in \mathbf{V}^{-},
\label{sens2}\\
c(\overline{\bmu},\bmu) = 0 & \quad & \forall \bmu \in \mathbf{W}.
\nonumber
\end{eqnarray}
If we set $\bv^{\pm} = \bu^{\pm}$ in the two first equations of~\eqref{fvextx}, we get after summation
\begin{eqnarray*}
a^+(\bu^+, \bu^+) + a^-(\bu^-,\bu^-) & = & -b(\blambda,\bu^+) + b(\blambda,\bu^-) + l^+(\bu^+)  + l^-(\bu^-).
\end{eqnarray*} 
Substituting this equality in the expressions of the Lagrangian~\eqref{expLL} based on the expression~\eqref{L0}, we obtain the following dual functional
\begin{eqnarray*}
J^{\ast}(\blambda) & = & \mathcal{L}(\bu^+(\blambda), \bu^-(\blambda), \blambda) \\
& = &  -\frac{1}{2}a^+(\bu^+,\bu^+) - \frac{1}{2}a^-(\bu^-, \bu^-)
-\frac{1}{2}c(\blambda, \blambda). \label{dualJ}
\end{eqnarray*}
The dual problem is therefore the maximization problem below:
\begin{eqnarray}
 \text{Find $\blambda^{\ast} \in \mathbf{W}$ such that:}
& & J^{\ast}(\blambda^{\ast}) \geq J^{\ast}(\bmu), \quad \forall \bmu \in \mathbf{W}.
\label{pboptJ}
\end{eqnarray}
For a given direction $\overline{\bmu}$, the directional derivative for $J^{\ast}$ is given by
\begin{eqnarray*}
DJ^{\ast}(\blambda).\overline{\bmu} & = & 
-a^+(\bu^+,\bomega^+) - a^-(\bu^-,\bomega^-). % - c(\blambda,\overline{\bmu}).
\end{eqnarray*}
In view of the sensitivity system~\eqref{sens1}-\eqref{sens2} with $\bv^{\pm} = \bu^{\pm}$, and in view of Remark~\ref{remarkcont}, this derivative can be expressed as
\begin{eqnarray*}
DJ^{\ast}(\blambda).\overline{\bmu} & = & b^+(\bu^+,\overline{\bmu}) - b^-(\bu^-,\overline{\bmu}) = \left\langle \overline{\bmu} , (\bu^+ - \bu^-) \right\rangle_{\mathbf{W},\mathbf{W}'}.
\end{eqnarray*}
Thus we deduce that the gradient of $J^{\ast}$ - for the $\mathbf{L}^2(\Gamma_0)$-inner product - is $D J^{\ast}(\blambda) = \left[ \bu \right]$. Moreover, if $\overline{\bmu}$ is a search direction for $J^{\ast}$, we obtain
\begin{eqnarray*}
DJ^{\ast}(\blambda + t \overline{\bmu}).\overline{\bmu} & = & \left\langle \overline{\bmu} , \left[ \bu \right] + t \left[ \bomega \right] \right\rangle_{\mathbf{W},\mathbf{W}'}.
\end{eqnarray*} 
The optimal step-size $t^{\ast}$ in the search direction $\overline{\bmu}$ is therefore
\begin{eqnarray*}
t^{\ast} & = & - \frac{\left\langle \overline{\bmu} , \left[ \bu \right] \right\rangle_{\mathbf{W},\mathbf{W}'}}{\left\langle \overline{\bmu} , \left[ \bomega \right] \right\rangle_{\mathbf{W},\mathbf{W}'}}.
\end{eqnarray*}
Since $J^{\ast}$ is quadratic and (strongly) concave, a good algorithm for solving~\eqref{pboptJ} is a conjugate gradient algorithm. The Fletcher-Reeves conjugate gradient algorithm for the maximization of the dual functional~\eqref{dualJ} is summarized in Algorithm~\ref{UCG}. 

\begin{algorithm}[htpb]
\begin{description}
\item[Initialization: $k=0$.] For $\blambda_0$ given, compute $\bu_0$ such that
\begin{eqnarray*}
a^{+}(\bu^{+}_0,\bv^+) =- b(\bv^+,\blambda_0) + l^{+}(\bv^+) & \quad & \forall \bv^+ \in \mathbf{V}^{+},  \\[.25pc]
a^{-}(\bu^{-}_0,\bv^-) = b(\bv^-,\blambda_0) + l^{-}(\bv^-) & \quad &  \forall \bv^- \in \mathbf{V}^{-}.   
\end{eqnarray*}
Set $\bg_0 = [\bu_0]$ on $\Gamma_0$ (gradient), and $\overline{\bmu}_0 = \bg_0$ (search direction).
\item[Iteration $k\geq 0$.] Assume that $\bu_k$, $\bg_k$, $\overline{\bmu}_k$ are known.
\begin{description}
   \item[Sensitivity:] Compute $\bomega^{\pm}_k$ such that
\begin{eqnarray*}
 a^{+}(\bomega^{+}_k,\bv^+) = - b(\bv^+,\overline{\bmu}_k) & \quad & \forall \bv^+ \in \mathbf{V}^{+},  \\[.25pc]
a^{-}(\bomega^{-}_k,\bv^-) = b(\bv^-,\overline{\bmu}_k) & \quad & \forall \bv^- \in \mathbf{V}^{-} 
\end{eqnarray*}
\item[Step size:] $t_k=-(\overline{\bmu}_k,[\bu_k])_{\mathbf{L}^2(\Gamma_0)}/(\overline{\bmu}_k,[\bomega_k])_{\mathbf{L}^2(\Gamma_0)}$
\item[Update:] $\bu^{\pm}_{k+1}=\bu^{\pm}_{k}+t_k\bomega^{\pm}_k$
\item[Gradient:] $\bg_{k+1}=\bg_k+t_k[\bomega_k]$
\item[Direction:] $\overline{\bmu}_{k+1}=\bg_{k+1}+\beta_k \overline{\bmu}_k$, where 
  $\beta_k=(\bg_{k+1},\bg_{k+1})_{\mathbf{L}^2(\Gamma_0)}/(\bg_{k},\bg_{k})_{\mathbf{L}^2(\Gamma_0)}$
\end{description}
\end{description}
\caption{Uzawa conjugate gradient algorithm}\label{UCG}
\end{algorithm}
\FloatBarrier

We iterate until the norm of the gradient becomes sufficiently small, namely
\begin{eqnarray*}
\left( \bg_k ; \bg_k \right)_{\mathbf{L}^2(\Gamma_0)} & < & \varepsilon
\left( \bg_0 ; \bg_0 \right)_{\mathbf{L}^2(\Gamma_0)},
\end{eqnarray*}
for some tolerance $\varepsilon >0$. The main characteristic of the dual algorithm is that, at each iteration, we solve one uncoupled problem.

\subsection{Libraries used in the implementation}
Numerical implementation is done with the code developed under the {\sc Getfem++} Library (see \cite{Getfem}). As mentioned previously, our program is based on the approach initially introduced for the Poisson problem (see \cite{HaslR}). In dimension 2, solving the global system can be made by using the library SuperLU \cite{SuperLU}; In dimension 3 the Uzawa iterative algorithm given in Section~\ref{secuzawa} and adapted for system of type~\eqref{systdec} is carried out while using the {\sc Gmm++} Library (included into the {\sc Getfem++} library).\\
For this kind of boundary value problems, in {\sc Getfem++} several difficulties have been tackled. These include:
\begin{itemize}
\item[--] Defining basis functions of ${\mathbf{W}}_h$ from traces on $\Gamma_0$ of the basis functions of $\tilde{{\mathbf{W}}}_h$. In particular, the independence of these functions is not automatically ensured, and redundant degrees of freedom have to be eliminated to avoid handling non-invertible systems.
\item[--] Localizing the interfaces corresponding to the crack and its extension. For that purpose, a level-set function method is implemented in the library.

\item[--] Accurately computing the integrals over elements at these interfaces during the assembly procedure, this involves calling the {\sc Qhull} Library (see \cite{qhull}).
\end{itemize}

\subsection{Tools for defining and updating the geometric configuration}
In practice, cracks are numerically described with level-set functions. A first one $ls_1$ describes the whole boundary $\Gamma = \Gamma_T \cup \Gamma_0$ with the equation $ls_1 = 0$. Then, we cut $\Gamma$ into $\Gamma_T$ with two auxiliary functions $ls_2$ and $ls_3$: The degrees of freedom concerned by $\Gamma_T$ will have to obey simultaneously the inequalities $ls_2 < 0$ and $ls_3 < 0$, while those concerned by $\Gamma_0 = \Gamma \setminus \Gamma_T$ will be the one of $\Gamma$ which satisfy $ls_2 \geq 0$ or $ls_3 \geq 0$. Using level-set functions, the position and shape of cracks can be modified when running inversions and cracks can be extended to study their propagation.\\
\hfill \\
To gain computation time when the crack geometry is updated several processes are followed.
%Let us recall that the main interest of our approach is the flexibility of the solver when the geometry - here the position/shape of the crack - has to change. The goal is then to update the less things we need, in order to spare time computation and resources. We describe in this subsection the process we can carry out when this situation occurs.\\
Let us denote by $\left\{ \bvarphi_i \right\}$ the standard uncut basis functions of some discrete finite element space $\tilde{\mathbf{V}}_h \subset \mathbf{H}^1(\Omega)$ introduced in Section~\ref{secfict}. This space is made of basis functions lying in the whole domain $\Omega$, which are uncut by the boundary $\Gamma$. We denote by
\begin{eqnarray*}
\left(A_0\right)_{ij} & = & \int_{\Omega} \sigma_L(\bvarphi_i) : \varepsilon(\bvarphi_j) \d \Omega
\end{eqnarray*}
the symmetric stiffness matrix in $\Omega$, which is independent of the crack and its extension. The integration method for computing each of these integrals is standard too. This matrix can be stored as the same time as some of its decompositions which enables us to invert it quickly and efficiently. From that, the basis functions $\left\{ \bvarphi_i^{\pm} \right\}$ of the spaces $\mathbf{V}^{\pm}_h$ (see Section~\ref{secfict} for more details) can be deduced with the help of {\it reduction} and {\it extension} matrices that we denote by $R^{\pm}$ and $E^{\pm}$ respectively: The reduction matrices $R^{\pm}$ enables to select the indexes of family $\left\{ \bvarphi_i \right\}$ used to define family $\left\{ \bvarphi_i^{\pm} \right\}$; The matrices $E^{\pm}$ have the inverse role. Note that these matrices - $R^{\pm}$ and $E^{\pm}$ - are sparse and binary, and thus can be easily manipulated. Moreover, they satisfy the following useful properties:
\begin{eqnarray*}
R^+E^+ = \I, \quad R^+ = {E^+}^T, \quad R^-E^- = \I, \quad R^- = {E^-}^T.
\end{eqnarray*}
At this stage, we do not have to get the functions $\bvarphi_i^{\pm}$ multiplied by Heaviside functions (mentioned in Section~\ref{secfict}) as they are only used in the integrations over $\Gamma$. Now we define
\begin{eqnarray*}
\tilde{A}_0^+ = R^+A_0, & \quad & \tilde{A}_0^- = R^-A_0.
\end{eqnarray*}
Recovery of stiffness matrices $A_0^{\pm}$ in the domains $\Omega^{\pm}$ (see Section~\ref{secmatrixform}) can be done from a local re-assembly of the integration terms for the matrices $\tilde{A}_0^{\pm}$, by including the Heaviside functions for the terms whose basis functions are intersected by the level-set representing $\Gamma$. In practice we have easily access to the indexes which correspond to the local re-assemblies.

\section{Numerical tests of convergence} \label{secnumcurve}

\subsection{Numerical experiments without stabilization in 2D} \label{secexp0} \label{subsubtest0}

\paragraph{Rates of convergence for displacement in 2D tests} \hfill \\
Given a square $\Omega = [0 ; 1] \times [0 ; 1]$, we consider for $\Gamma$ a straight inclined line splitting $\Omega$ into two parts. We choose for $\Gamma$ the set represented by the level-set function
\begin{eqnarray*}
ls_1(x,y) & = & y -2(x-x_0).
\end{eqnarray*}
The crack $\Gamma_T$ is then chosen to be delimited by the secondary level-set functions:
\begin{eqnarray*}
ls_2(x,y) = x_A - x, & \quad & ls_2(x,y) = x - x_B,
\end{eqnarray*}
%$\Gamma = [0;1] \times \{ 0.53 \}$, and
%\begin{eqnarray*}
%\Gamma_T & = & [0.47 ; 0.57] \times \{ 0.53 \}, \\
%\Gamma_0 = \Gamma \setminus \Gamma_T & = & [0.00 ; 0.47 [ \times \{ 0.53 \}
%\ \cup \ ]0.57 ; 1.00 [ \times \{ 0.53 \}.
%\end{eqnarray*}
as the points of $\Gamma_0$ satisfying $ls_2 < 0$ and $ls_3 < 0$. For instance we can take $x_0 = 0.317$, $x_A = 0.47$ and $x_B = 0.52$. Let us recall that we have defined $\Gamma_0 = \Gamma \setminus \Gamma_T$. For the displacement, we consider the following exact solution
\begin{eqnarray*}
\bu_{ex}(x,y) & = & \left( \begin{array} {ll}
(x+y)\cos(x) \\
(x-y)\sin(y)
\end{array}
\right) \text{ if $ls_1(x,y) > 0$},\\
\bu_{ex}(x,y) & = & \left( \begin{array} {ll}
(x+y)\cos(x) - D_1(x,y)\\
(x-y)\sin(y) - D_2(x,y)
\end{array}
\right) \text{ if $ls_1(x,y) \leq 0$}.
\end{eqnarray*}
%Since the difficulty of our problem lies in the implementation of the jump condition, we only take a look at $\Gamma_0$, and so $\Gamma_T = \emptyset$. The imposed jump on $\Gamma_0$ is chosen to be constant. 
The jump $D = (D_1,D_2)^T$ across $\Gamma_0$ is no longer equal to zero. However, it is chosen to be constant, in order to avoid introducing jumps in normal derivatives across this boundary (see Remark~\ref{remarkcont}).\\
In the figures below we show the results of computation of the relative errors on the displacement, for different choices of the finite element spaces $\tilde{\mathbf{V}}_h$ and $\mathbf{W}_h$. For instance, the couple P2/P0 indicates that we have considered a standard continuous P2 element for both partial displacements $\bu^+_h$ and $\bu^-_h$, and a discontinuous P0 element for the multiplier $\blambda$. We deduce an approximation of the order of convergence for the global displacement reconstructed afterwards from the partial ones $\bu^+_h$ and $\bu^-_h$.

\begin{figure}[!h]
\begin{tabular} {c|c}
\includegraphics[trim = 1cm 7cm 2cm 7cm, clip, scale=0.45]{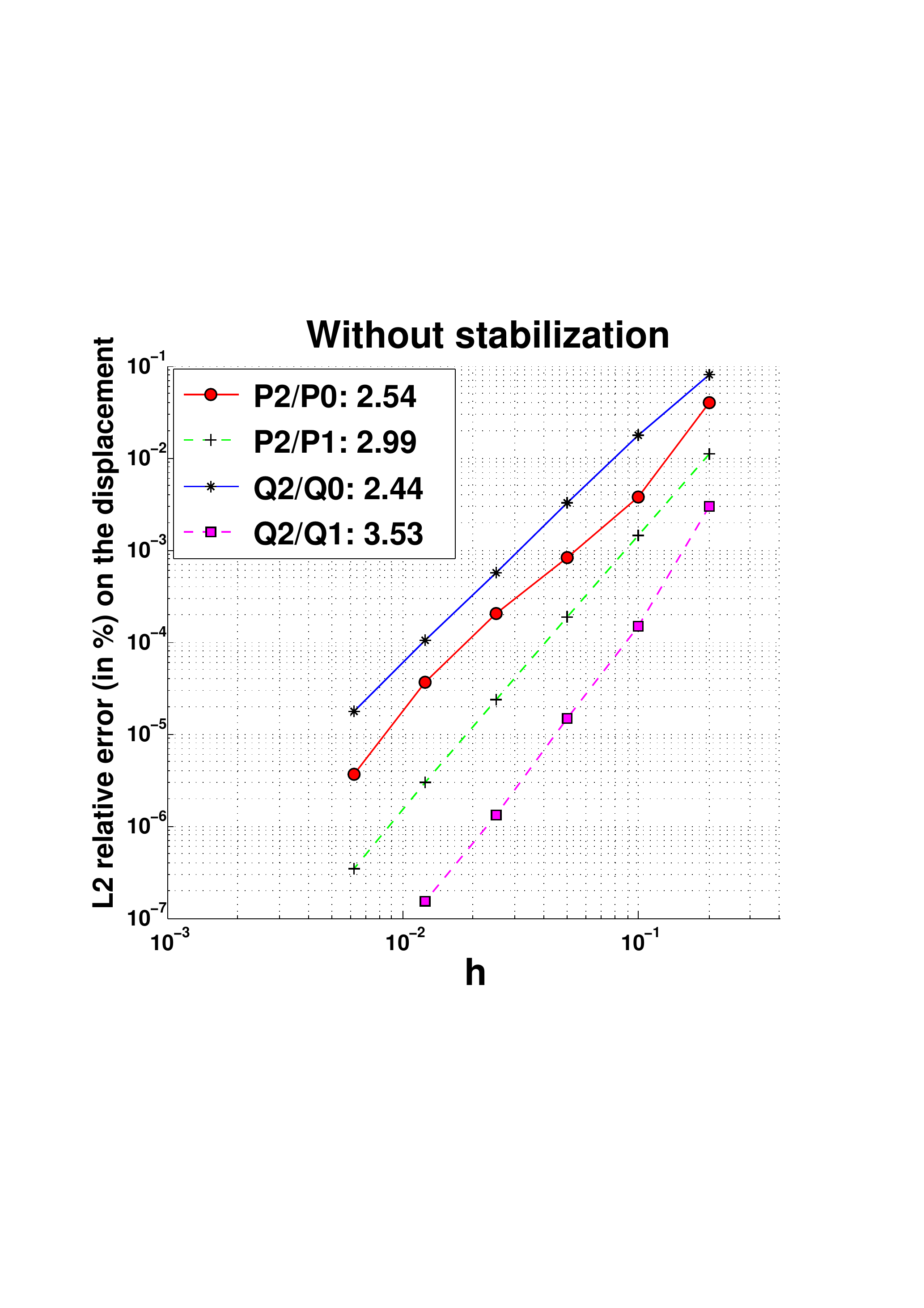}
&
\includegraphics[trim = 1cm 7cm 2cm 7cm, clip, scale=0.45]{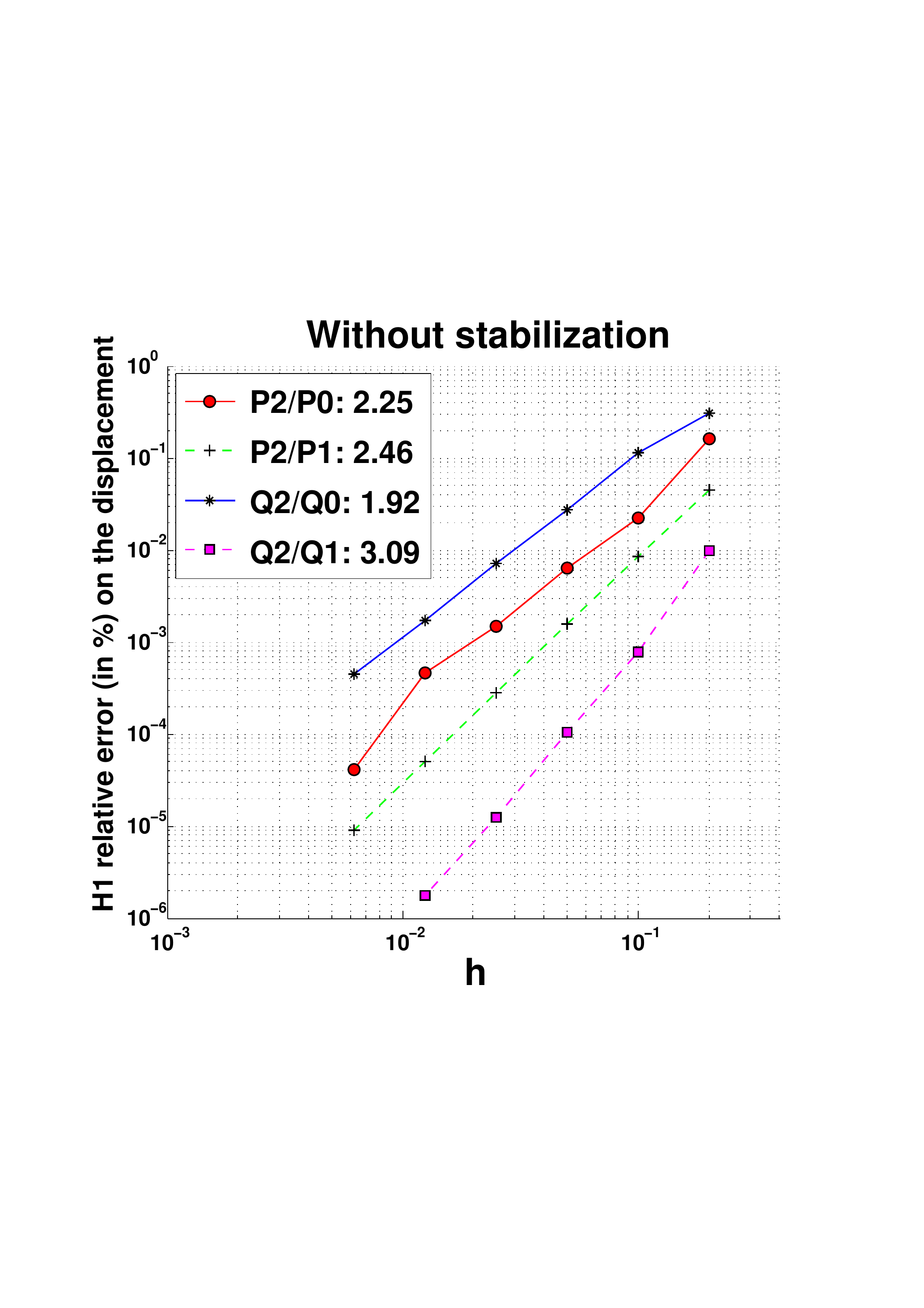}
\\
\includegraphics[trim = 1cm 7cm 2cm 7cm, clip, scale=0.45]{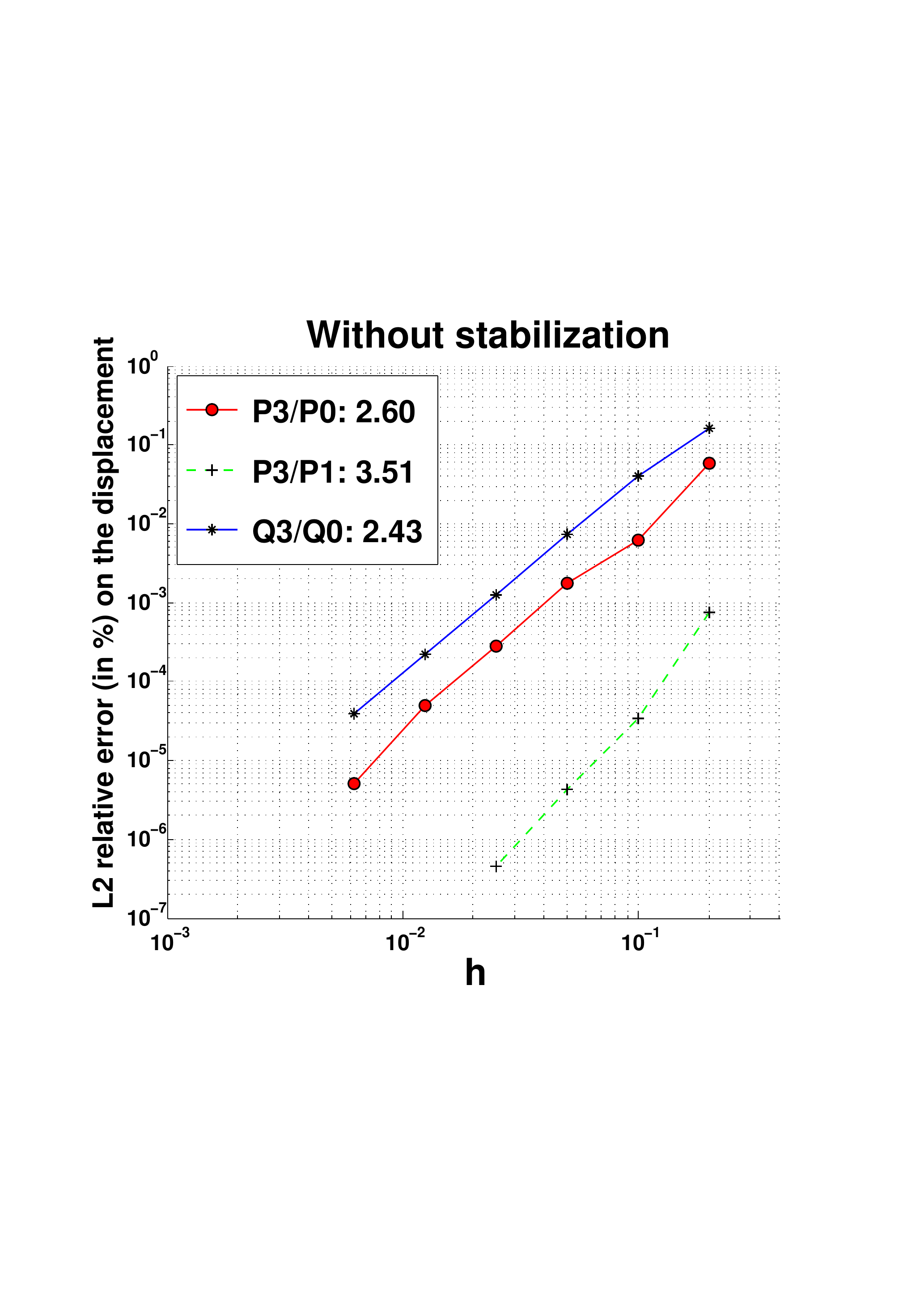}
&
\includegraphics[trim = 1cm 7cm 2cm 7cm, clip, scale=0.45]{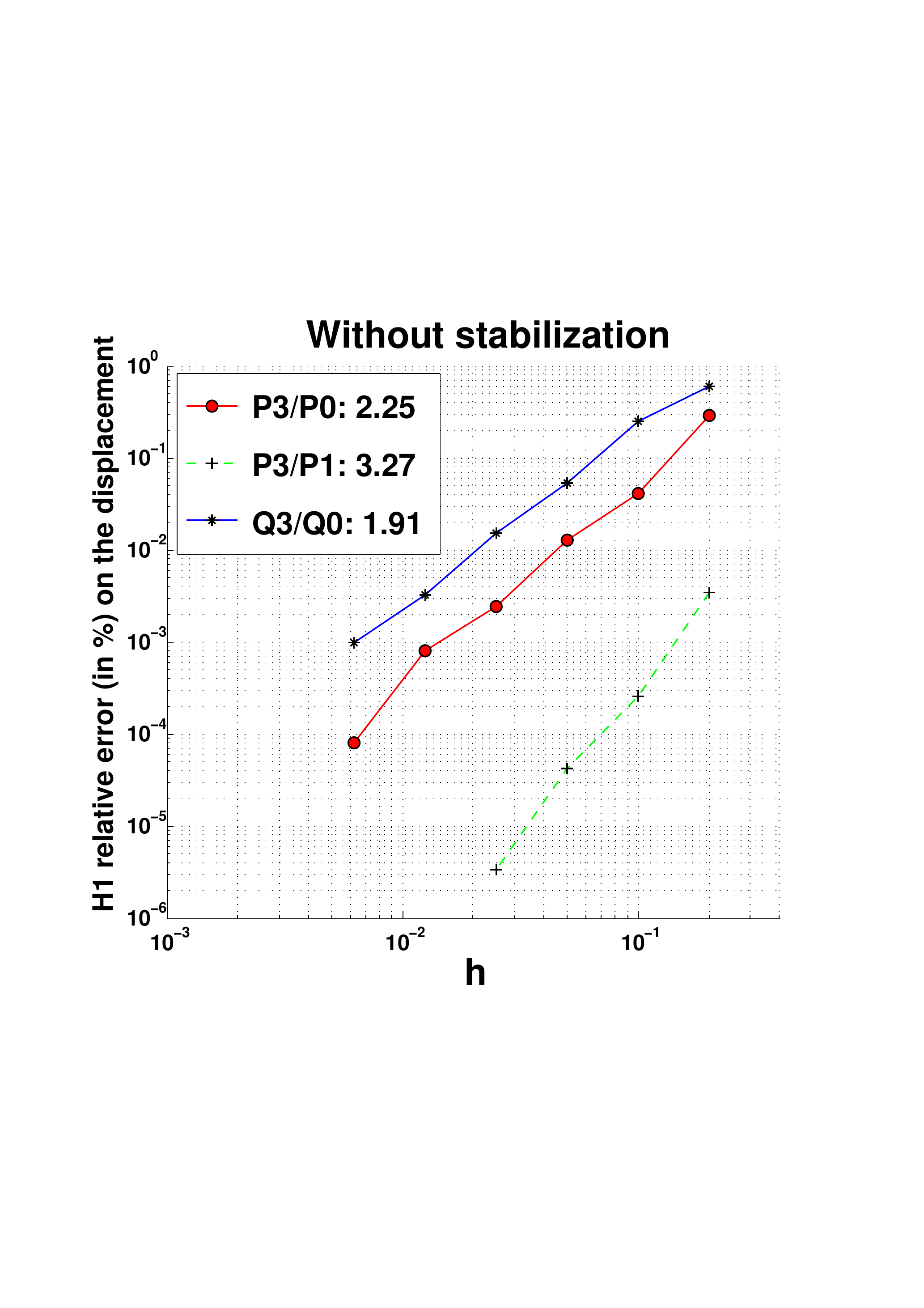}
\end{tabular}
\centering\caption{$\mathbf{L}^2(\Omega)$ and $\mathbf{H}^1(\Omega)$-relative errors (in \%) on the displacement in function of the mesh size~$h$, without stabilization, and estimation of convergence rates from the slope of the curves by linear regression.
\label{figrate0}}
\end{figure}
\FloatBarrier

We observe (in Figure~\ref{figrate0}) that the optimal rate of convergence for the displacement seems to be better reached - up to round numbers errors - when P1 or Q1 elements are chosen for the multiplier instead of discontinuous P0 or Q0 elements, for the $\mathbf{L}^2(\Omega)$-relative errors as well as for the $\mathbf{H}^1(\Omega)$-relative errors. Furthermore, we notice that the rates of convergence of the $\mathbf{H}^1(\Omega)$-norm for the displacement are better than expected when P1 or Q1 elements are chosen for the multiplier. Last, note that some computations are missing - namely tests with Q3/Q1 elements and some tests with P3/P1 elements - because in these cases computing errors implies handling quantities which are close to the engine precision, making these tests irrelevant.

\paragraph{Rates of convergence for the multiplier}\hfill \\
We denote by $\mathbf{U}^+_{ex}$, $\mathbf{U}^-_{ex}$ and $\bLambda_{ex}$ the discrete vectors interpolating the functions $\bu^+_{ex}:= {\bu^+_{ex}}_{| \Omega^+}$, $\bu^-_{ex} :={\bu^-_{ex}}_{| \Omega^-}$ and $\blambda_{ex} = \mp \sigma_{L}(\bu^{\pm}_{ex})\bn^{\pm}$ respectively. The $\mathbf{L}^2$-type error introduced by the approximation $\bLambda$ of the exact vector $\bLambda_{ex}$ is considered as the square root of
\begin{eqnarray*}
\int_{\Gamma_0} \left|\sigma_L(\mathbf{U}^+_{ex})\bn^+ + \bLambda \right|^2 \d\Gamma_0 +
\int_{\Gamma_0} \left|\sigma_L(\mathbf{U}^-_{ex})\bn^- - \bLambda \right|^2 \d\Gamma_0.
\end{eqnarray*}
For practical purposes, we compute this error by developing the underlying inner product as
\begin{eqnarray}
\langle A^+_{\bu \bu} \mathbf{U}^+_{ex}, \mathbf{U}^+_{ex} \rangle 
+ \langle A^-_{\bu \bu} \mathbf{U}^-_{ex}, \mathbf{U}^-_{ex} \rangle 
+ 2 \langle A^+_{\bu \blambda} \mathbf{U}^+_{ex}, \bLambda_{ex} \rangle
- 2 \langle A^-_{\bu \blambda} \mathbf{U}^-_{ex}, \bLambda_{ex} \rangle
+ 2 \langle A_{\blambda \blambda} \bLambda, \bLambda \rangle \label{scmat}
\end{eqnarray}
where, the matrices $A^{\pm}_{\bu \bu}$, $A^{\pm}_{\bu \blambda}$ and $A_{\blambda \blambda}$ are the discretized representations of the respective following bilinear forms
\begin{eqnarray*}
& & \mathcal{A}^{\pm}_{\bu \bu} : (\bu, \bv)  \mapsto 
\int_{\Gamma_0}\sigma_L(\bu)\bn^{\pm} \cdot \sigma_L(\bv)\bn^{\pm} \d \Gamma_0, \quad 
\mathcal{A}^{\pm}_{\bu \blambda} : (\bu, \bv)  \mapsto 
\int_{\Gamma_0}\sigma_L(\bu)\bn^{\pm} \cdot \blambda \d \Gamma_0, \\
& & \mathcal{A}_{\blambda \blambda} : (\blambda, \bmu)  \mapsto 
\int_{\Gamma_0}\blambda \cdot \bmu \d \Gamma_0. 
\end{eqnarray*}
Relative errors provided in Figure~\ref{figmult0} are thus computed as the square root of the inner product~\eqref{scmat} divided by $\displaystyle \left\| \sigma_L(\mathbf{U}^+_{ex})\bn^+ \right\|_{\mathbf{L}^2(\Gamma_0)}^2 + \left\| \sigma_L(\mathbf{U}^-_{ex})\bn^- \right\|_{\mathbf{L}^2(\Gamma_0)}^2 = \langle A^+_{\bu \bu} \mathbf{U}^+_{ex}, \mathbf{U}^+_{ex} \rangle + \langle A^-_{\bu \bu} \mathbf{U}^-_{ex}, \mathbf{U}^-_{ex} \rangle$.

\begin{figure}[!h]
\begin{tabular} {c|c}
\includegraphics[trim = 1cm 7cm 2cm 7cm, clip, scale=0.45]{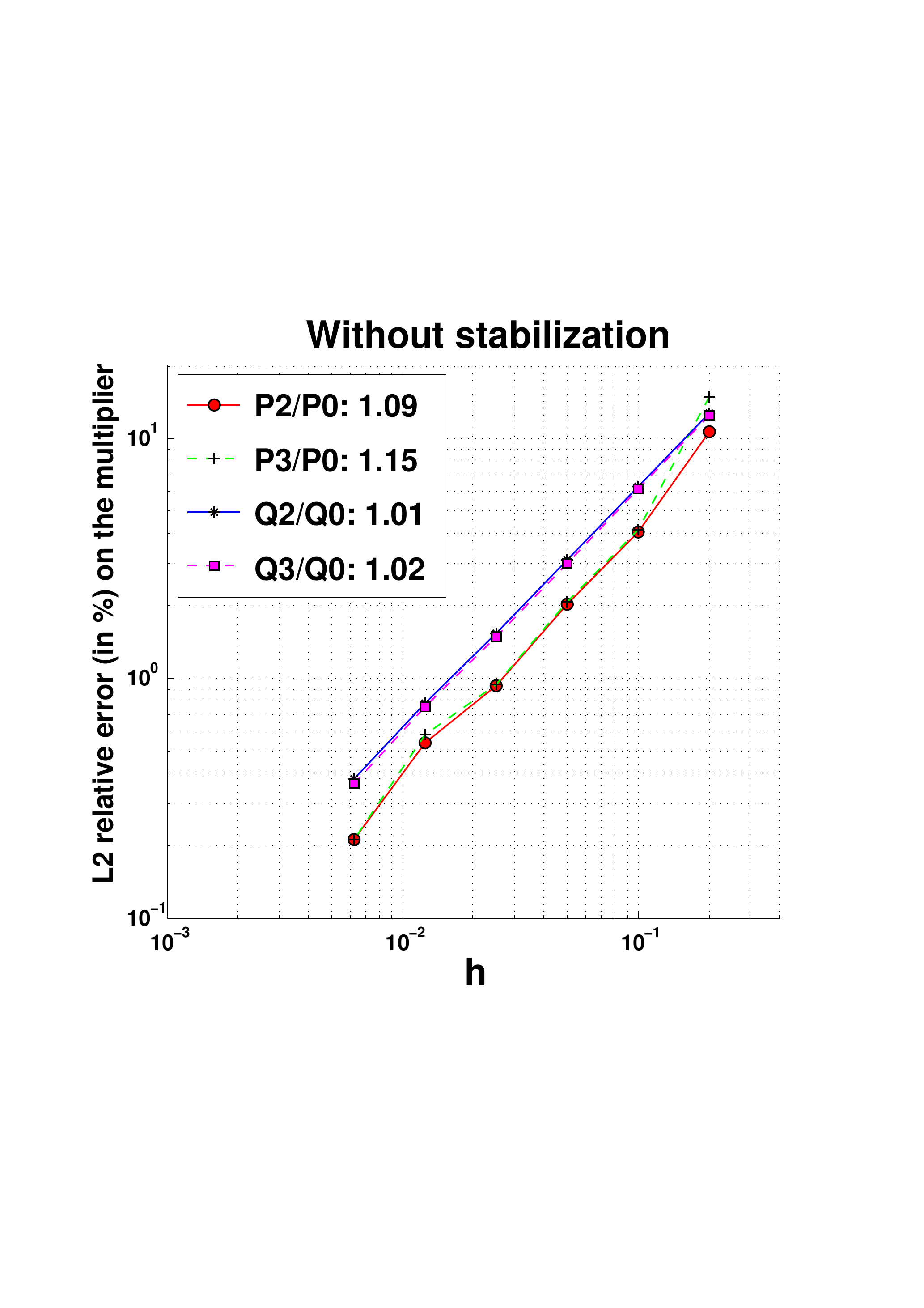}
&
\includegraphics[trim = 1cm 7cm 2cm 7cm, clip, scale=0.45]{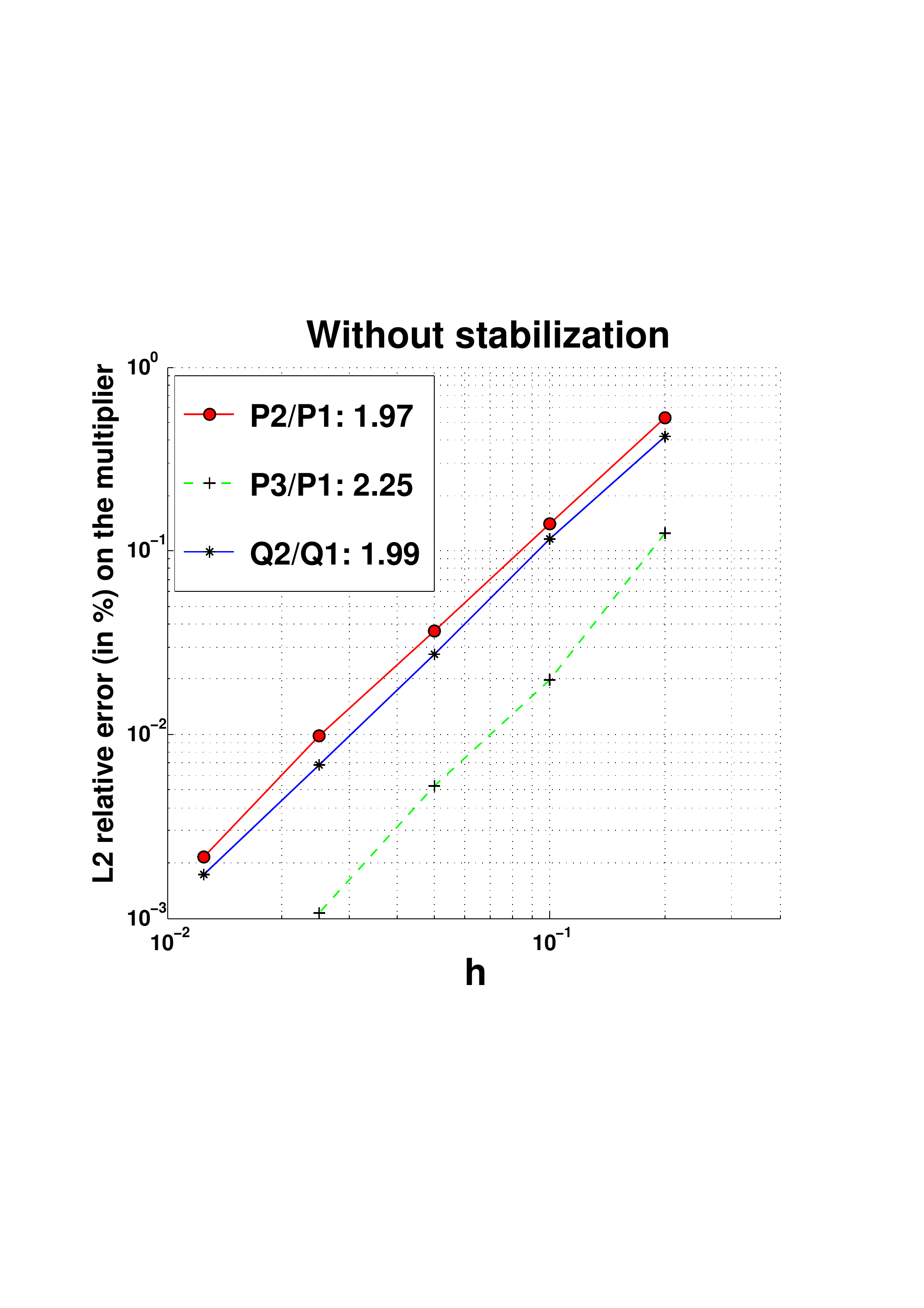}
\end{tabular}
\centering\caption{$\mathbf{L}^2(\Omega)$-relative error (in \%) on the multiplier in function of the mesh size~$h$, without stabilization, and estimation of convergence rates from the slope of the curves by linear regression.\label{figmult0}}
\end{figure}
\FloatBarrier

In Figure~\ref{figmult0} the optimal rate of convergence for the multiplier is obtained for P0 or Q0 elements as well as for P1 or Q1 elements. Note that the theoretical analysis for this fictitious domain approach does not guarantee {\it a priori} such a good convergence for dual variables. Anyway, without performing any stabilization, it seems that, for most geometric configurations the method provides us a good approximation for the values of this multiplier. Pathological configurations which highlight the necessity of a special treatment are in general hard to find. However, we show in Section~\ref{secrobust} that the stabilization technique brings us a gain of robustness with respect to the geometry, as it is shown in Figure~\ref{figrobmL2}.\\
Besides, as expected in view of Remark~\ref{remarkcont}, we observed numerically that the quantity
\begin{eqnarray*}
\displaystyle \left\| \sigma_L(\mathbf{U}^+_{ex})\bn^+  +  \sigma_L(\mathbf{U}^-_{ex})\bn^- \right\|_{\mathbf{L}^2(\Gamma_0)}^2 
\end{eqnarray*}
is negligible compared with $\displaystyle \left\| \sigma_L(\mathbf{U}^+_{ex})\bn^+ \right\|_{\mathbf{L}^2(\Gamma_0)}^2 + \left\| \sigma_L(\mathbf{U}^-_{ex})\bn^- \right\|_{\mathbf{L}^2(\Gamma_0)}^2$, without imposing {\it a priori} the condition $\left[ \sigma_L(\bu) \right]\bn^+$ across $\Gamma_0$.

\paragraph{Illustration in 2D: Deformation in volcanic rift zones} \hfill \\ %\label{secpollard}
In order to illustrate the method on realistic 2D tests, we consider the models treated in \cite{Pollard}, namely the computation of characteristics of displacements due to inside cracks. We consider the rectangular domain $[0;100]\times [0;50]$ endowed with a Cartesian mesh. The only right-hand-side is a constant pressure applied on both sides of the fracture as a Neumann-condition. The fracture position is determined by the points of coordinates $(x,y)$ satisfying
\begin{eqnarray*}
y = 2(x-48.0) + 35.0, & \quad & 35 \leq y \leq 45. 
\end{eqnarray*}
Standard P2 elements are chosen for the displacement, and P0 elements are chosen for the multiplier. Stabilization is not performed, since here we are not interested in computing the multiplier. Illustrations are presented in Figure~\ref{figpollard} and Figure~\ref{figstream}.

\begin{figure}[!h]
\begin{center}
\begin{tabular} {c|c}
\includegraphics[trim = 1cm 0.5cm 1cm 1cm, clip, scale=0.18]{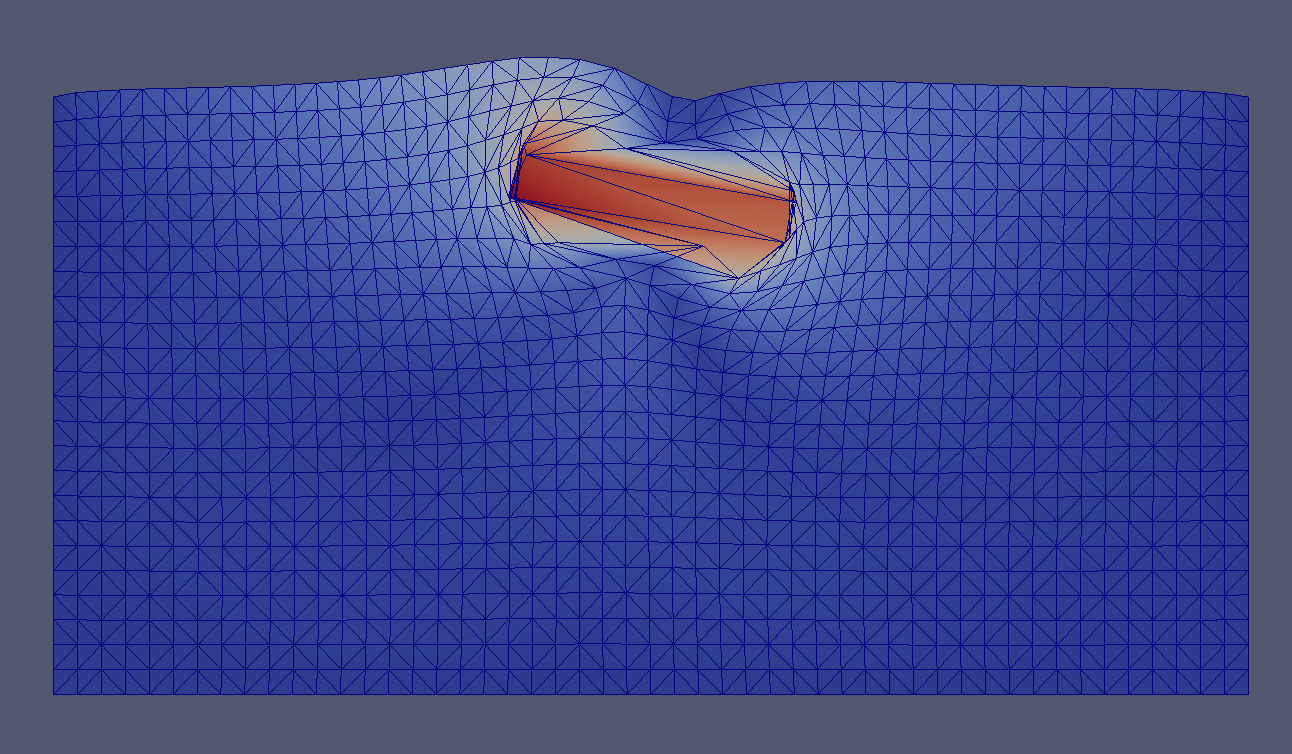}
&
\includegraphics[trim = 1cm 0.5cm 1cm 1cm, clip, scale=0.18]{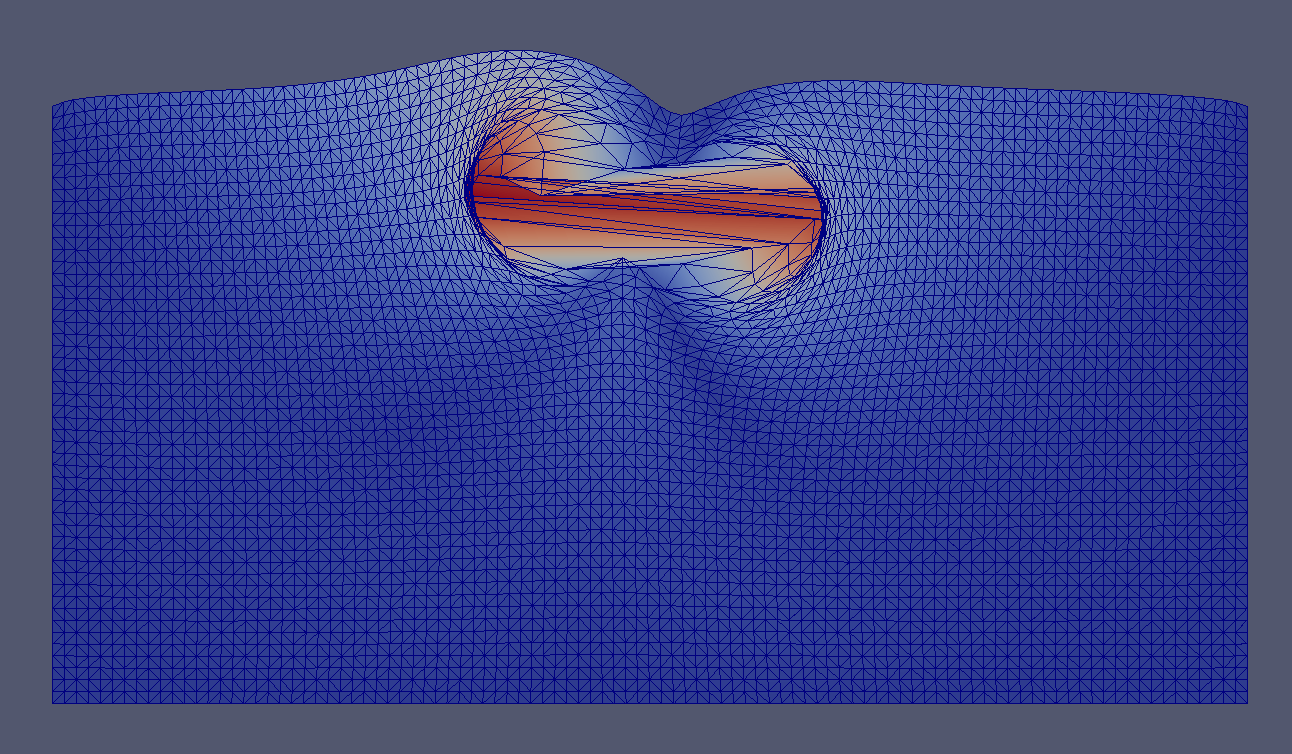}
\\
\hline
\includegraphics[trim = 1cm 0.5cm 1cm 1cm, clip, scale=0.18]{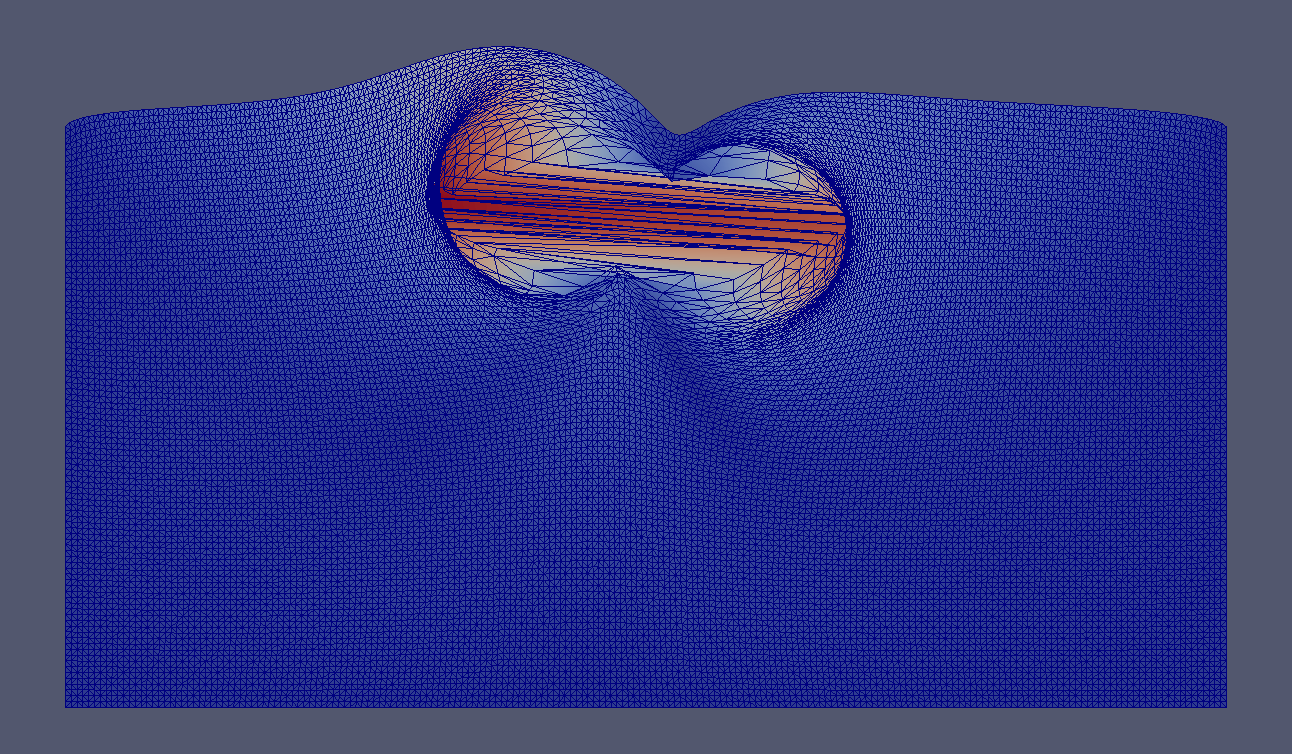}
&
\includegraphics[trim = 1cm 0.5cm 1cm 1cm, clip, scale=0.18]{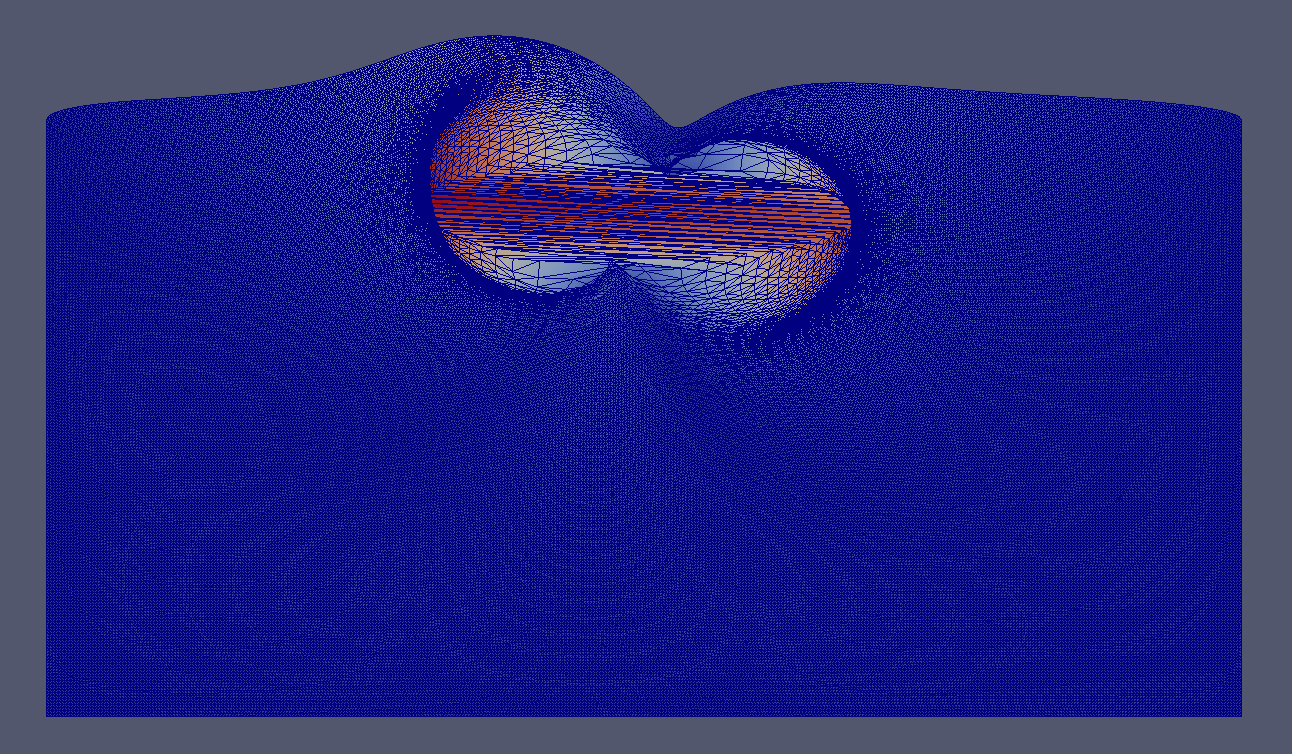}
\end{tabular}
\end{center}
\centering\caption{Representation of the intensity of displacement due to an inclined straight fracture, on a Cartesian mesh warped with respect to the deformation, for different numbers of subdivisions, respectively: $25\times 12$, $50 \times 25$, $100\times 50$ and $200\times 100$.\label{figpollard}}
\end{figure}
\FloatBarrier

\begin{figure}[!h]
\begin{center}
\includegraphics[trim = 1cm 1.6cm 0cm 2.3cm, clip, scale=0.50]{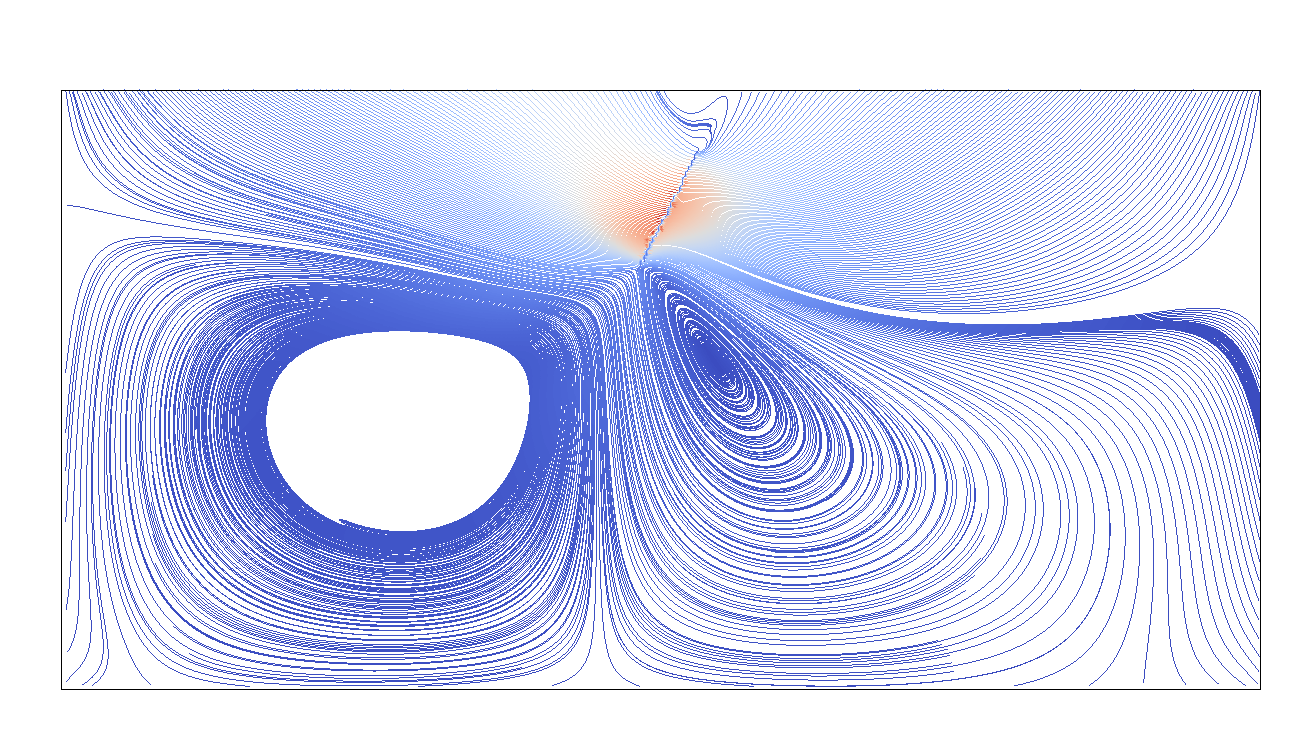}
\end{center}
\centering\caption{Outline of level curves for the displacement due to an inclined straight fracture.\label{figstream}}
\end{figure}
\FloatBarrier

\subsection{Choice of the stabilization parameter $\gamma$} \label{secgamma}
We are now interested in the stabilized problem~\eqref{pbfv} whose the explicit matrix formulation is given by~\eqref{systdec}. Like in \cite{HaslR} and \cite{Court}, recall that we choose $\gamma = \gamma_0*h$, with $\gamma_0 > 0$ constant, independent of the mesh size~$h$. This constant has to be chosen judiciously. On one hand it represents the importance given to the quality of the approximation for the multiplier, but on the other hand, the stabilization term degrades the coerciveness of the whole system, so $\gamma_0$ has to remain moderate. In Figure~\ref{figgamma} we provide a test in which the relative error on the multiplier $\blambda$ is computed for a range of values for $\gamma_0$.

\begin{figure}[!h]
\begin{tabular} {c|c}
\includegraphics[trim = 1cm 7cm 2cm 7cm, clip, scale=0.45]{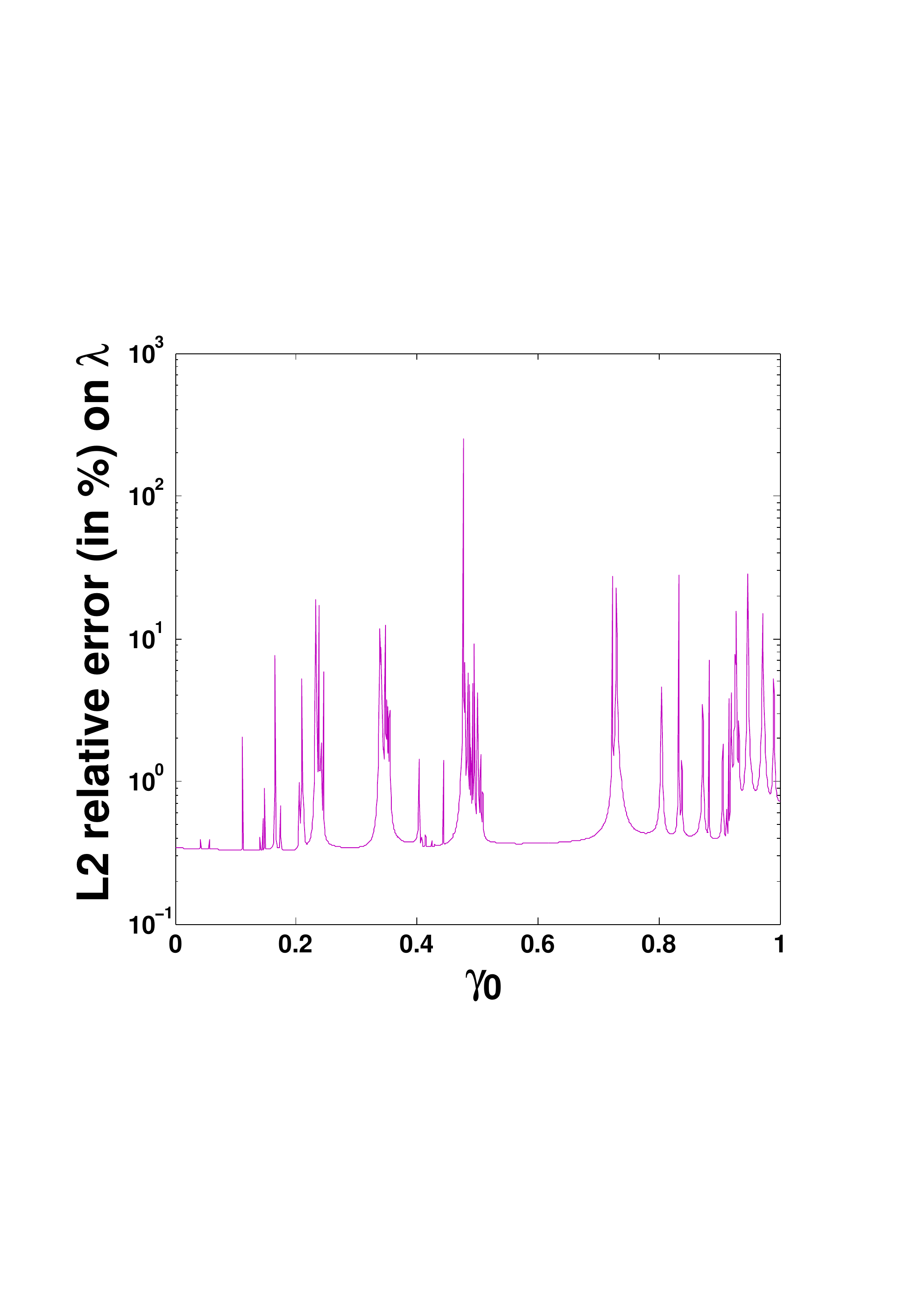} &
\includegraphics[trim = 1cm 7cm 2cm 7cm, clip, scale=0.45]{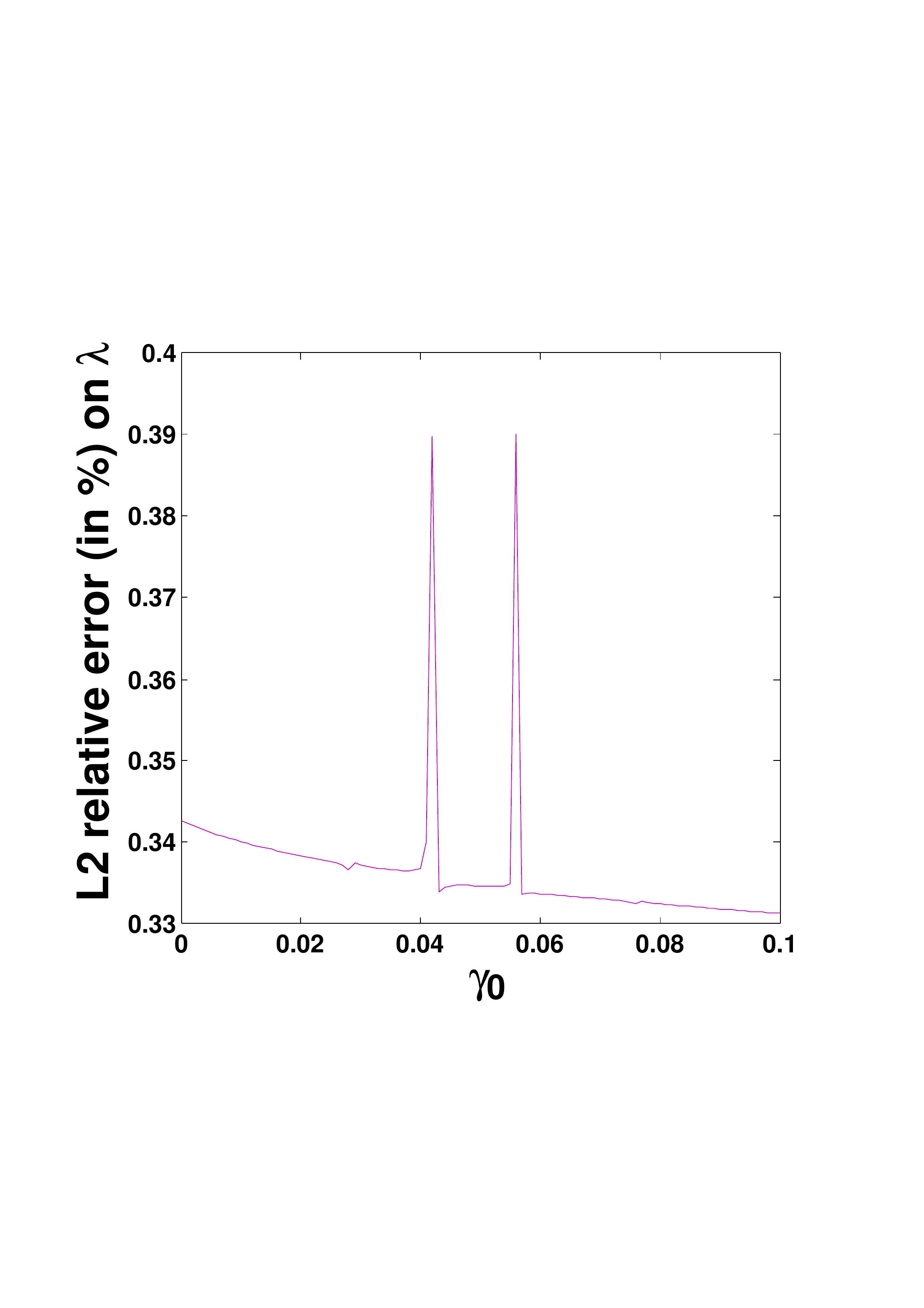}
\end{tabular}
\centering\caption{$\mathbf{L}^2(\Gamma_0)$-relative error (in \%) on $\blambda$ for different position of the crack $\Gamma_T$.\label{figgamma}}
\end{figure}
\FloatBarrier

Note in the graph on the left of Figure~\ref{figgamma} that the first singular approximations on the multiplier occur for $\gamma_0 < 0.1$, and become more frequent and chaotic for larger values of $\gamma_0$. In the graph on the right we examine the error on $\blambda$ for more precise values of $\gamma_0 < 0.1$; We notice that the approximation of $\blambda$ gets better when $\gamma_0$ increases, until some values of $\gamma_0 > 0.04$ which generate the first picks. Thus we choose $\gamma_0 \leq 0.03$ for the rest of the study. 

\subsection{Numerical orders of convergence with stabilization} \label{secrates1}
In this subsection, we provide the same tests as performed without stabilization in Section~\ref{secexp0}. The stabilization technique is performed with the parameter $\gamma_0 = 0.03$. The results are given in Figure~\ref{figcvstab1} and Figure~\ref{figmult1}. As expected for a given geometry, we observe nearly the same rates of convergence as obtained without any stabilization, particularly for the multiplier. 

%\subsubsection{Convergence curves for the displacement}

\begin{figure}[!h]
\begin{tabular} {c|c}
\includegraphics[trim = 1cm 7cm 2cm 7cm, clip, scale=0.45]{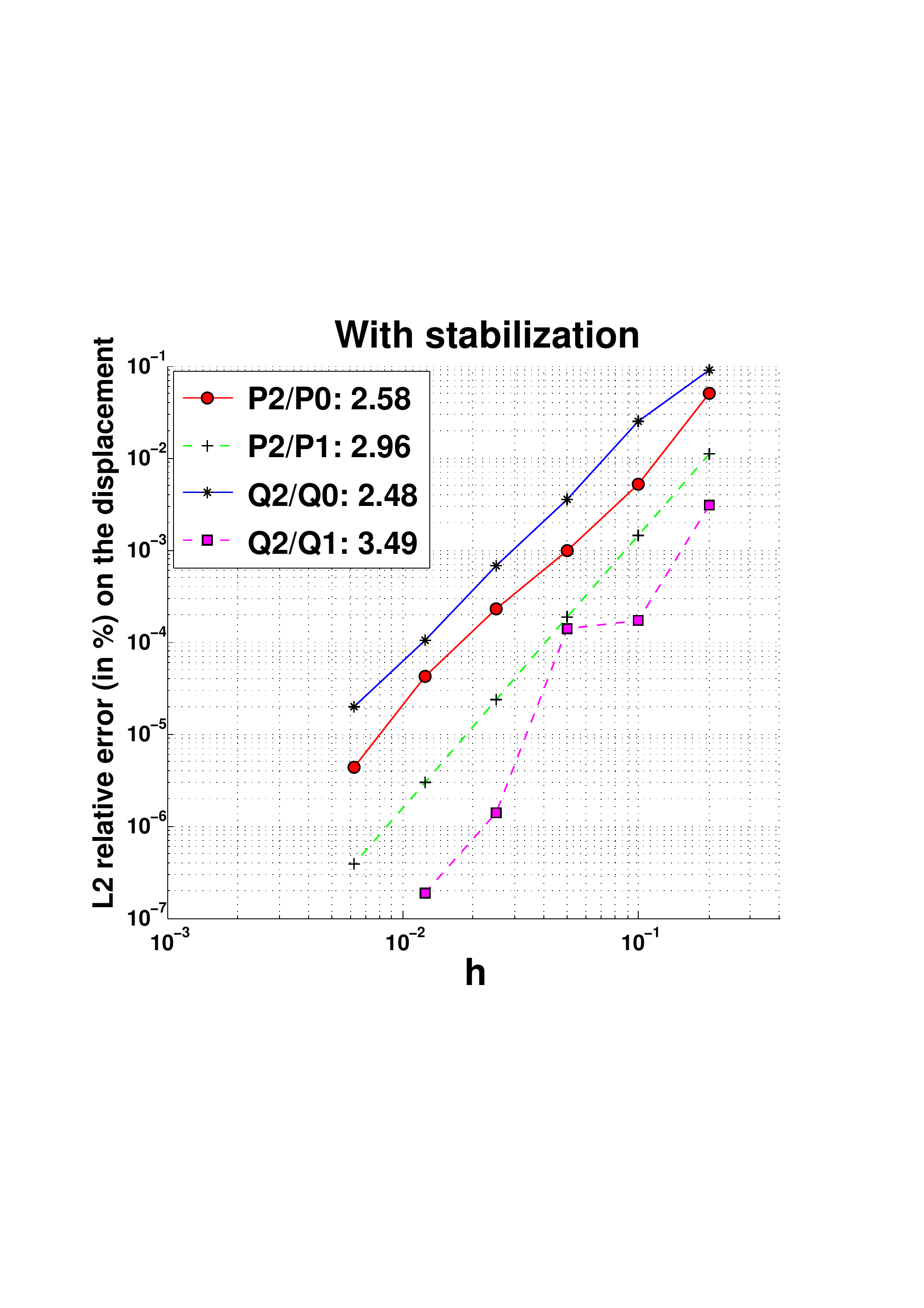}
&
\includegraphics[trim = 1cm 7cm 2cm 7cm, clip, scale=0.45]{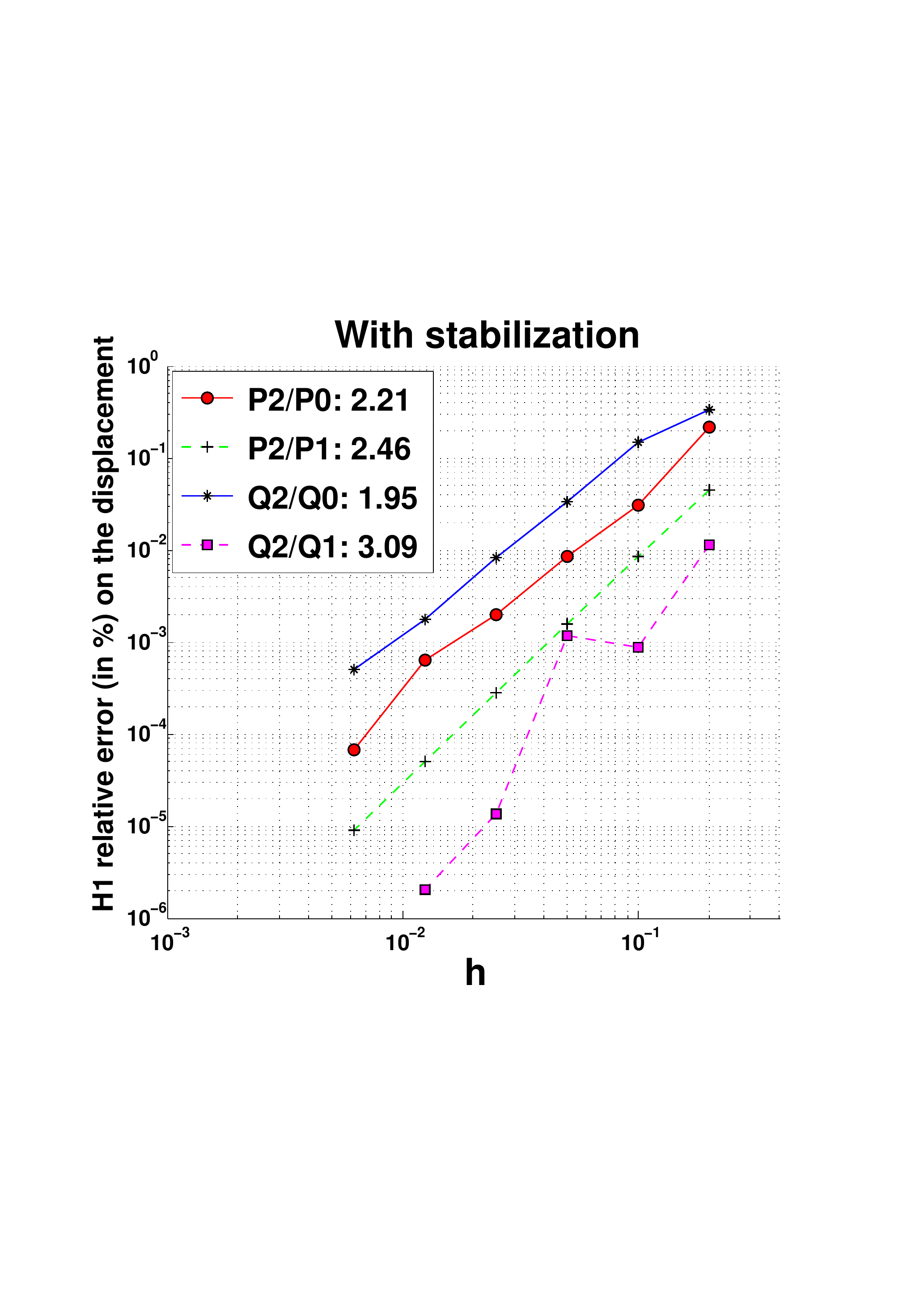}
\\
\includegraphics[trim = 1cm 7cm 2cm 7cm, clip, scale=0.45]{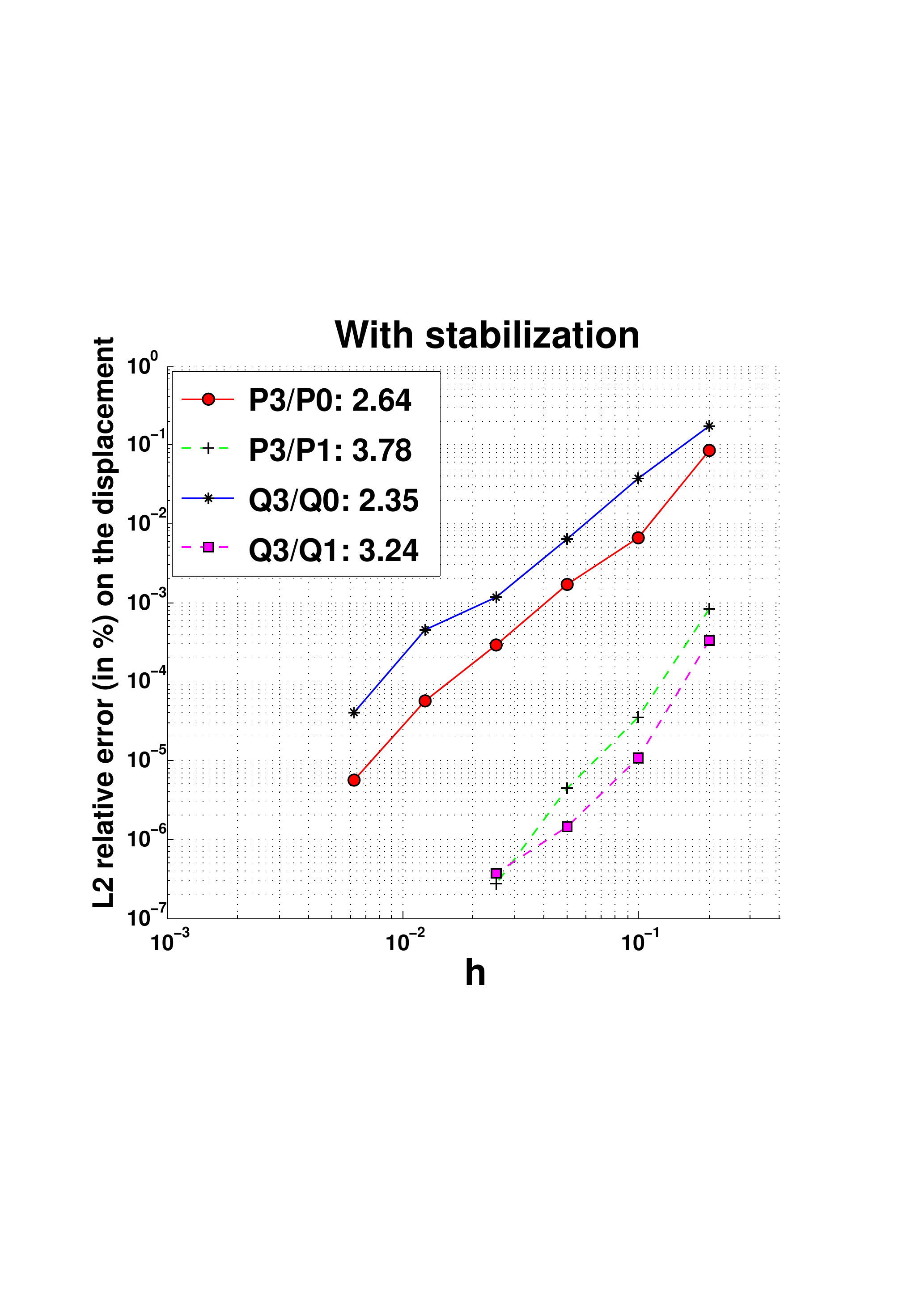}
&
\includegraphics[trim = 1cm 7cm 2cm 7cm, clip, scale=0.45]{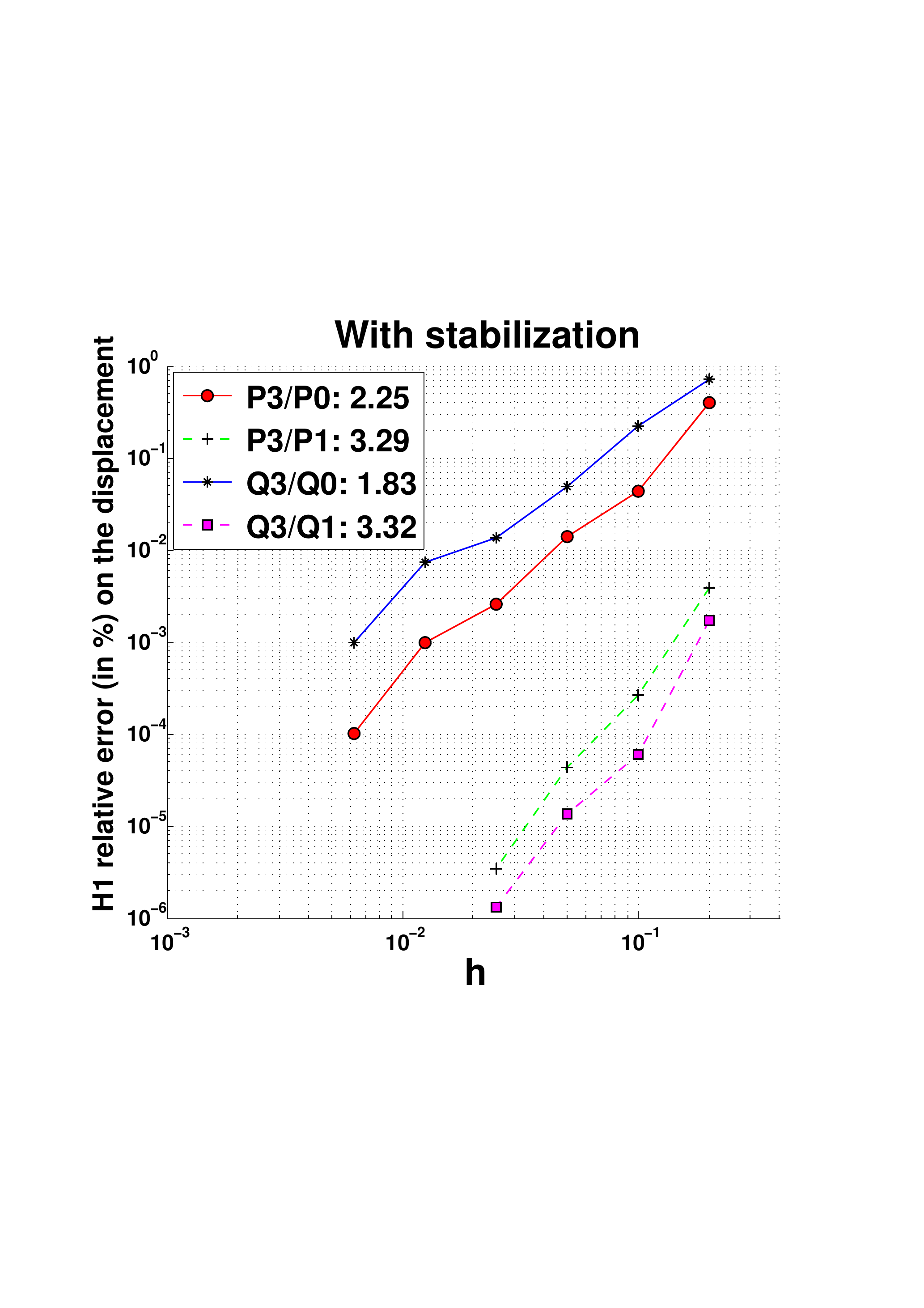}
\end{tabular}
\centering\caption{$\mathbf{L}^2(\Omega)$ and $\mathbf{H}^1(\Omega)$-relative errors (in \%) on the displacement in function of the mesh size~$h$, with stabilization, and estimation of convergence rates from the slope of the curves by linear regression.\label{figcvstab1}}
\end{figure}
\FloatBarrier

%\subsubsection{Convergence curves for the multiplier}

\begin{figure}[!h]
\begin{tabular} {c|c}
\includegraphics[trim = 1cm 7cm 2cm 7cm, clip, scale=0.45]{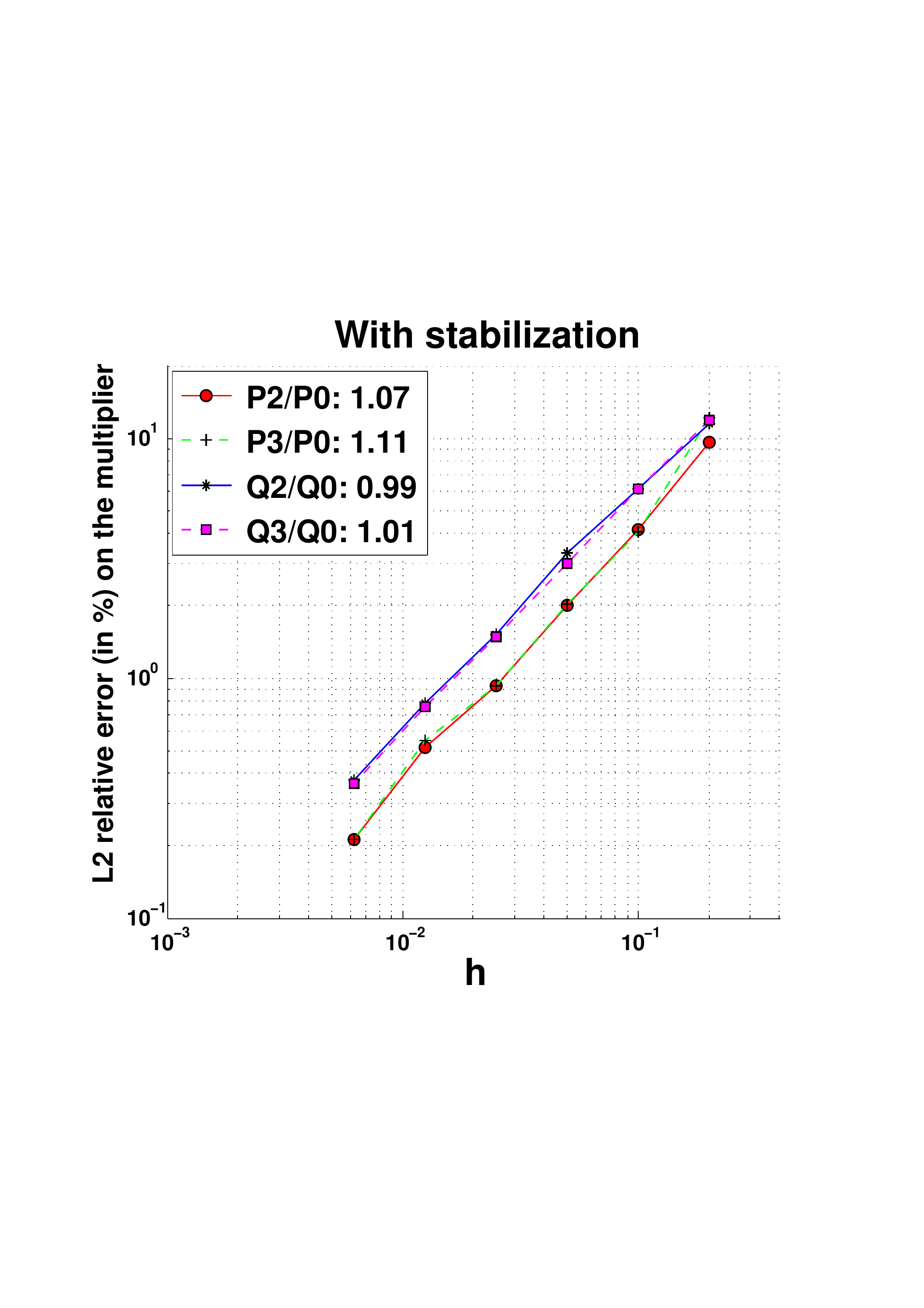}
&
\includegraphics[trim = 1cm 7cm 2cm 7cm, clip, scale=0.45]{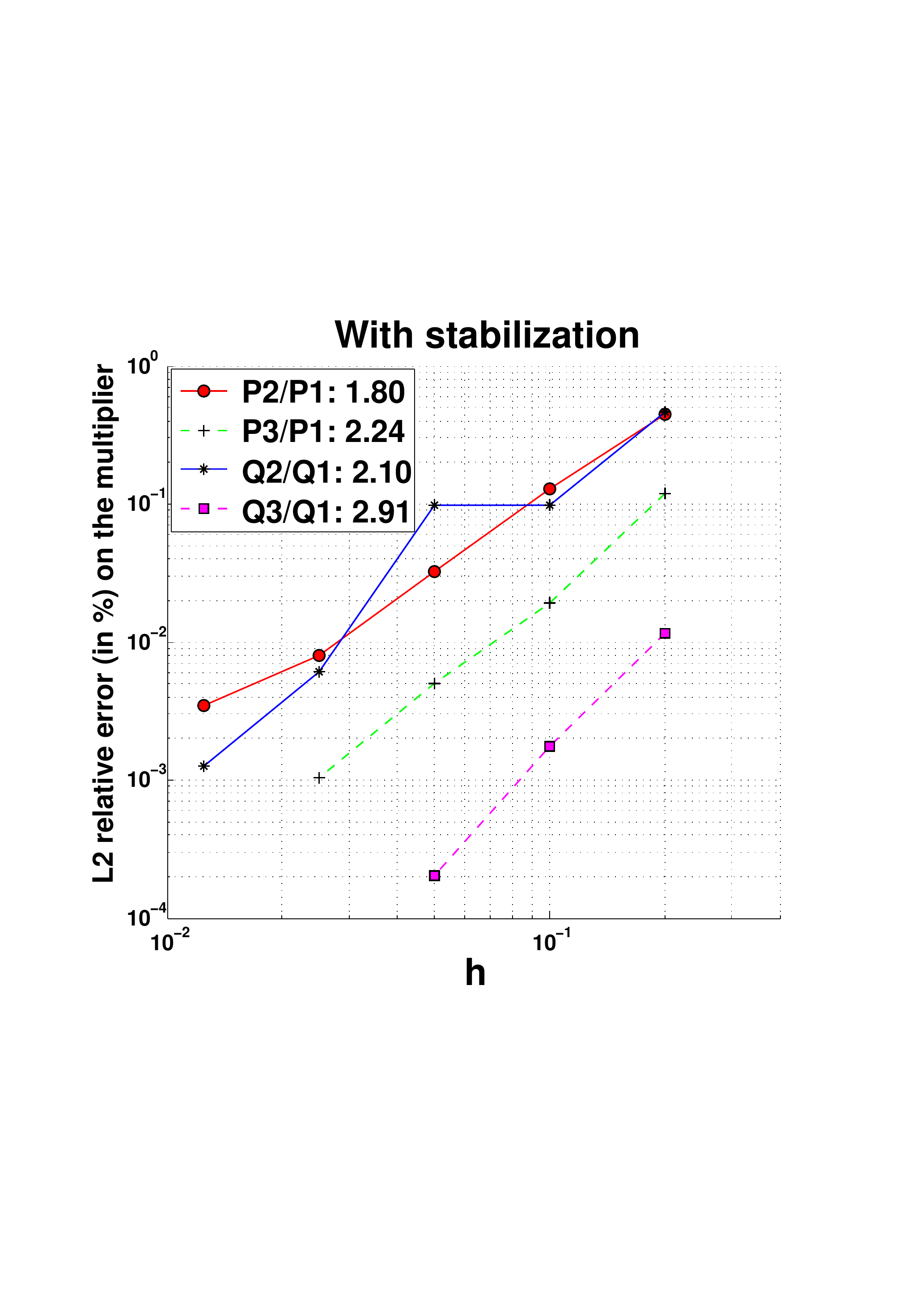}
\end{tabular}
\centering\caption{$\mathbf{L}^2(\Omega)$-relative error (in \%) on the multiplier in function of the mesh size~$h$, with stabilization, and estimation of convergence rates from the slope of the curves by linear regression.\label{figmult1}}
\end{figure}
%\FloatBarrier
%As expected for a given geometry, we observe slightly the same rates of convergence as obtained without any stabilization, particularly for the multiplier. 

%Actually we expect to observe a difference concerning the robustness of the geometry: The stabilization technique brings a substantial improvement, as it is described in Section~\ref{secrobust}.

\section{Robustness with respect to the geometry} \label{secrobust}
In order to highlight the main interest of the stabilization, we test the convergence behavior for different geometries, namely when the crack intersect the global mesh in different manners. For that, we perform two tests for which we analyze the relative error for the multiplier of $\mathbf{L}^2(\Gamma_0)$ as well as relative errors on global displacement, in $\mathbf{L}^2(\Omega)$ and $\mathbf{H}^1(\Omega)$, with and without using the stabilization technique.\\
In the first tests we make the length of the crack vary, keeping its position, and in the second tests the length is kept constant, but the position is varied. In these first tests, we do not observe any significant difference whether we perform the stabilization or not: Results are good and similar, quite better for the multiplier with stabilization. These results are not shown. The second tests show the $\mathbf{L}^2(\Gamma_0)$-relative error for the multiplier with and without stabilization; They are represented in Figure~\ref{figrobmL2}. In these second tests the computation of errors on the displacement do not reveal any significant improvement due to the stabilization technique, and so they are not presented either.

\begin{figure}[!h]
\begin{tabular} {c|c}
\includegraphics[trim = 1cm 7cm 2cm 7cm, clip, scale=0.45]{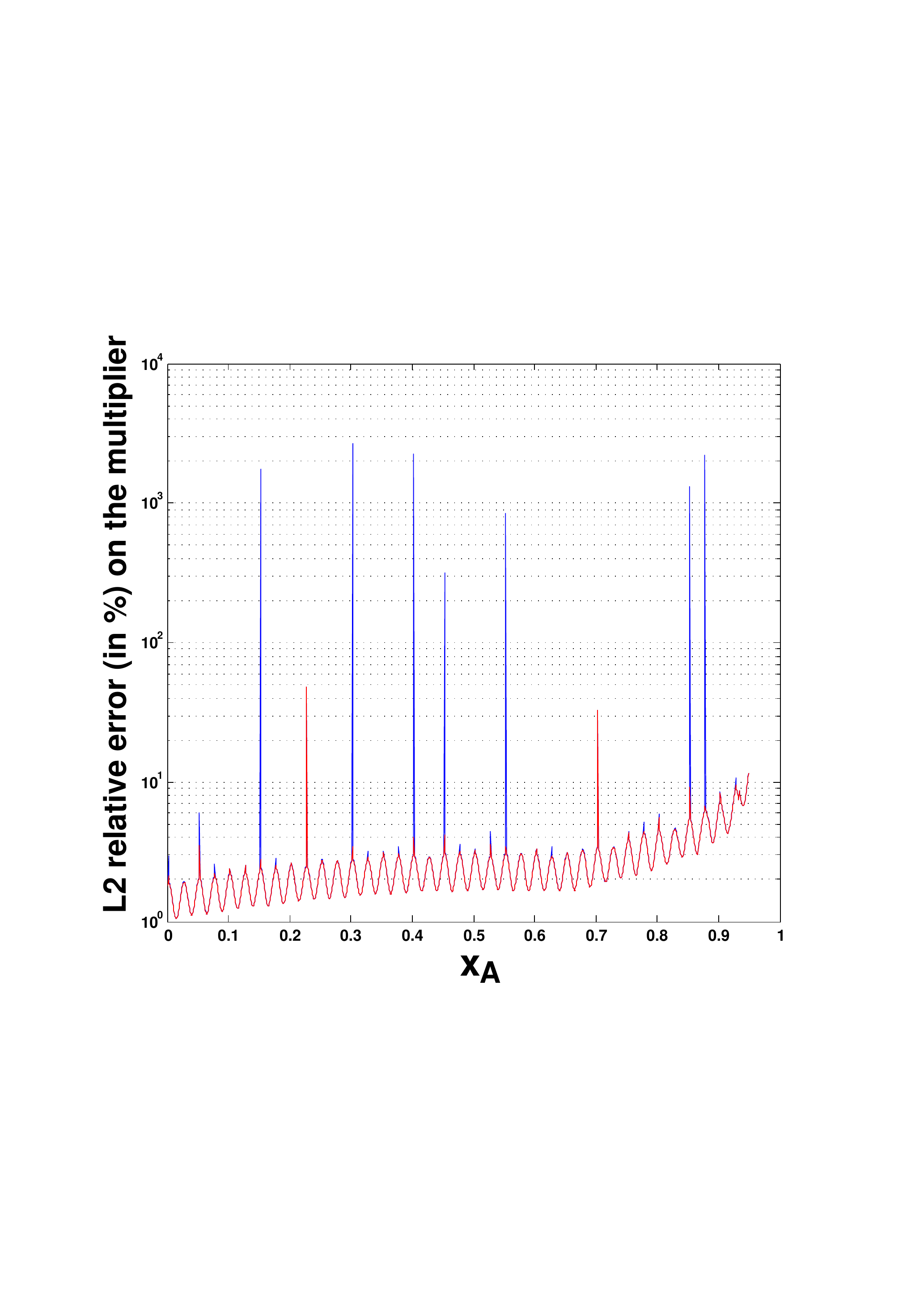}
&
\includegraphics[trim = 1cm 7cm 2cm 7cm, clip, scale=0.45]{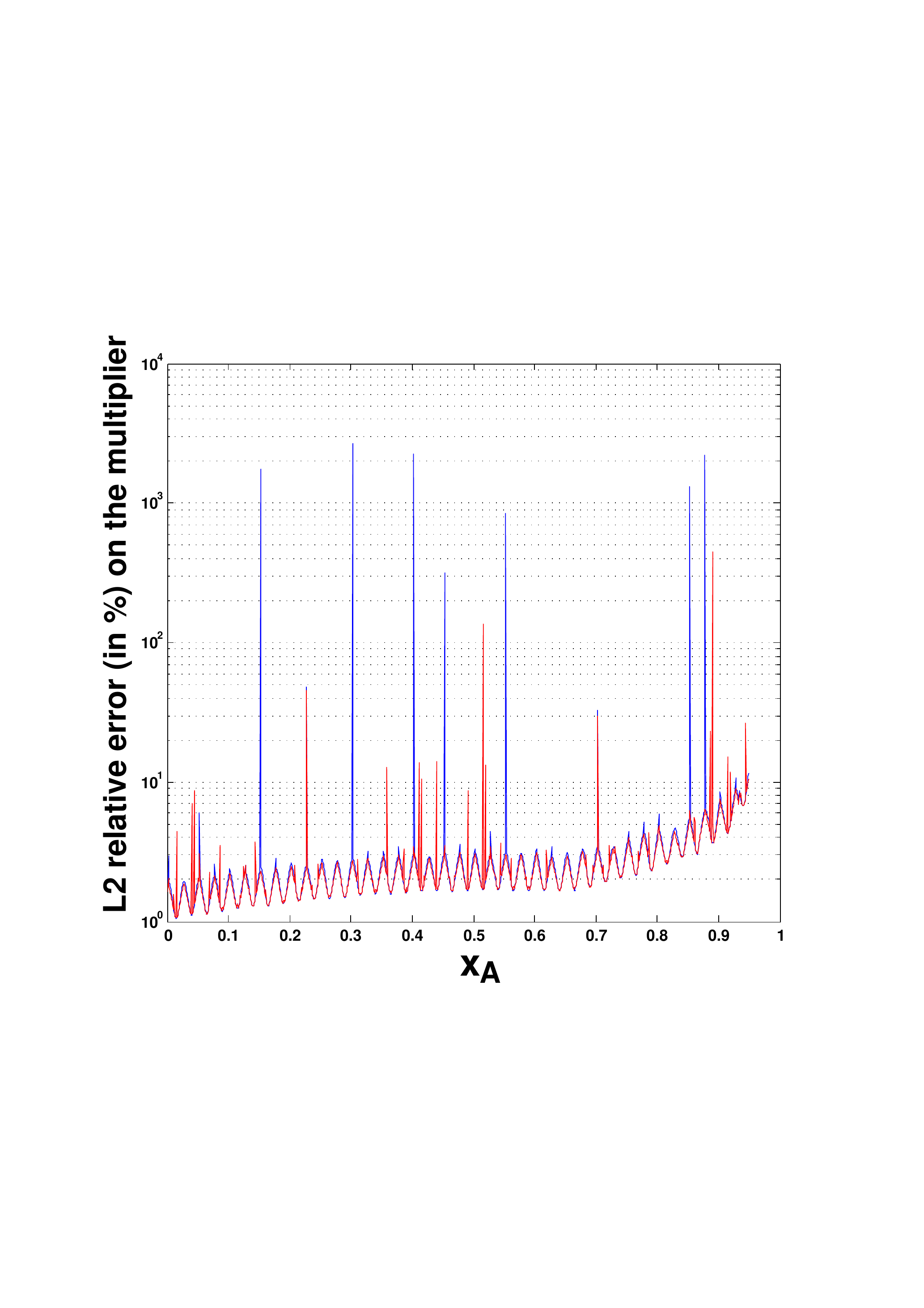}
\end{tabular}
\centering\caption{$\mathbf{L}^2(\Gamma_0)$-relative error (in \%) on the multiplier without stabilization (in blue) and with stabilization (in red), for various positions of the crack, with $\gamma_0 = 0.0005$ (left) and $\gamma_0 = 0.03$ (right).\label{figrobmL2}}
\end{figure}
\FloatBarrier

In Figure~\ref{figrobmL2}, the crack determined by the points of coordinates $(x,y)$ satisfying
\begin{eqnarray*}
y -2(x-x_0) = 0, \quad x_A - x < 0,  \quad  x - x_B < 0
\end{eqnarray*}
has been considered in the square $[0;1] \times [0;1]$, corresponding to the exact solutions given in Section~\ref{subsubtest0}. The abscissas $x_0$, $x_A$ and $x_B$ have been changed simultaneously, with $x_A$ going from $0.000$ to $0.950$, while keeping $x_B = x_A + 0.050$ and $x_0 = x_A - 0.153$. Finite elements P2 and P0 have been chosen for the displacement and the multiplier respectively.\\
We observe that bad computations - corresponding to huge $\mathbf{L}^2(\Gamma_0)$-relative error - occur in fewer cases with the stabilization technique, and have a smaller $\mathbf{L}^2(\Gamma_0)$-relative error. Moreover, the plots show that it is delicate to choose the stabilization parameter $\gamma_0$; If it is chosen too large (like for instance $\gamma_0 = 0.03$), the problematic situations occur more frequently, and thus it is preferable to choose a quite small $\gamma_0$ (like $\gamma_0 = 0.0005$), even if the system is close to a system without any stabilization terms.

\section{Realistic numerical simulations: Deformation of a volcano} \label{secPiton}
The code developed in 2D has been extended to dimension 3, and validated with the same type of convergence curves as the ones given in Section~\ref{secnumcurve} by considering exact solutions. As an illustration, we consider the geophysical problem of a pressurized crack inside a volcano.\\

A 3D magma-filled crack (also called a {\it dike}) is approximated with a set of triangles, as represented in Figure~\ref{figdike}. 

\begin{figure}[!h]
\begin{tabular} {c|c}
\includegraphics[trim = 1cm 0cm 0cm 0cm, clip, scale=0.40]{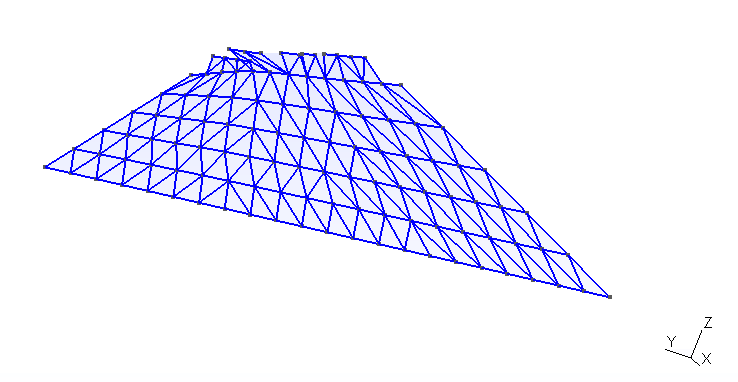} &
\includegraphics[trim = 1cm 0cm 0cm 0cm, clip, scale=0.40]{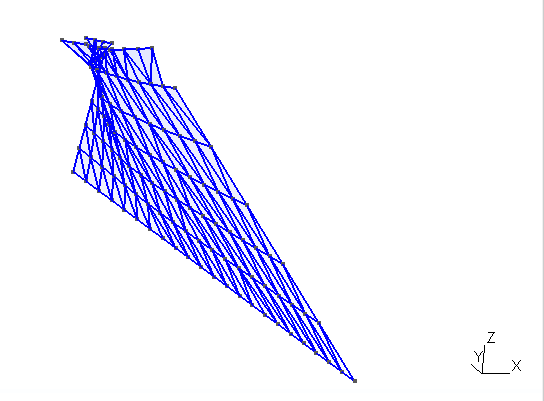}
\end{tabular}
\centering\caption{3D magma-filled crack, called {\it dike}, represented by a set of triangles.\label{figdike}}
\end{figure}
\FloatBarrier
Note that we do not consider this crack as a mesh: This way of representing the crack is just a practical means for describing its geometry from degrees of freedom - the vertices of the triangles - which are independent of the global mesh.

\subsection{Extension of the crack} \label{secextension}
The purpose of this subsection is to explain how to construct an extension of the crack which splits the 3D computational domain, when the crack is represented by a set of triangles, such as the one given in Figure~\ref{figdike}.\\
The process of construction is the following: Given a triangle $ABC$ whose the centroid is denoted by $G$, we define a point $S$ whose orthogonal projection on the triangle is $G$. The distance between $S$ and $G$ is empirically chosen to be of the same order as the size of the triangle (namely the square root of the triangle area). We next define the cone of apex $S$ based on the triangle $ABC$, and we cut this cone with $ABC$. Thus it enables us to define the extension of the triangle as the remaining boundary of the initial cone, and also a region  delimited by the crack and its extension. See Figure~\ref{figextension}.

\begin{figure}[!h]
\begin{tabular} {c|c|c}
\raisebox{33pt}{\includegraphics[trim = 0cm 0cm 0cm 0cm, clip, scale = 0.3]{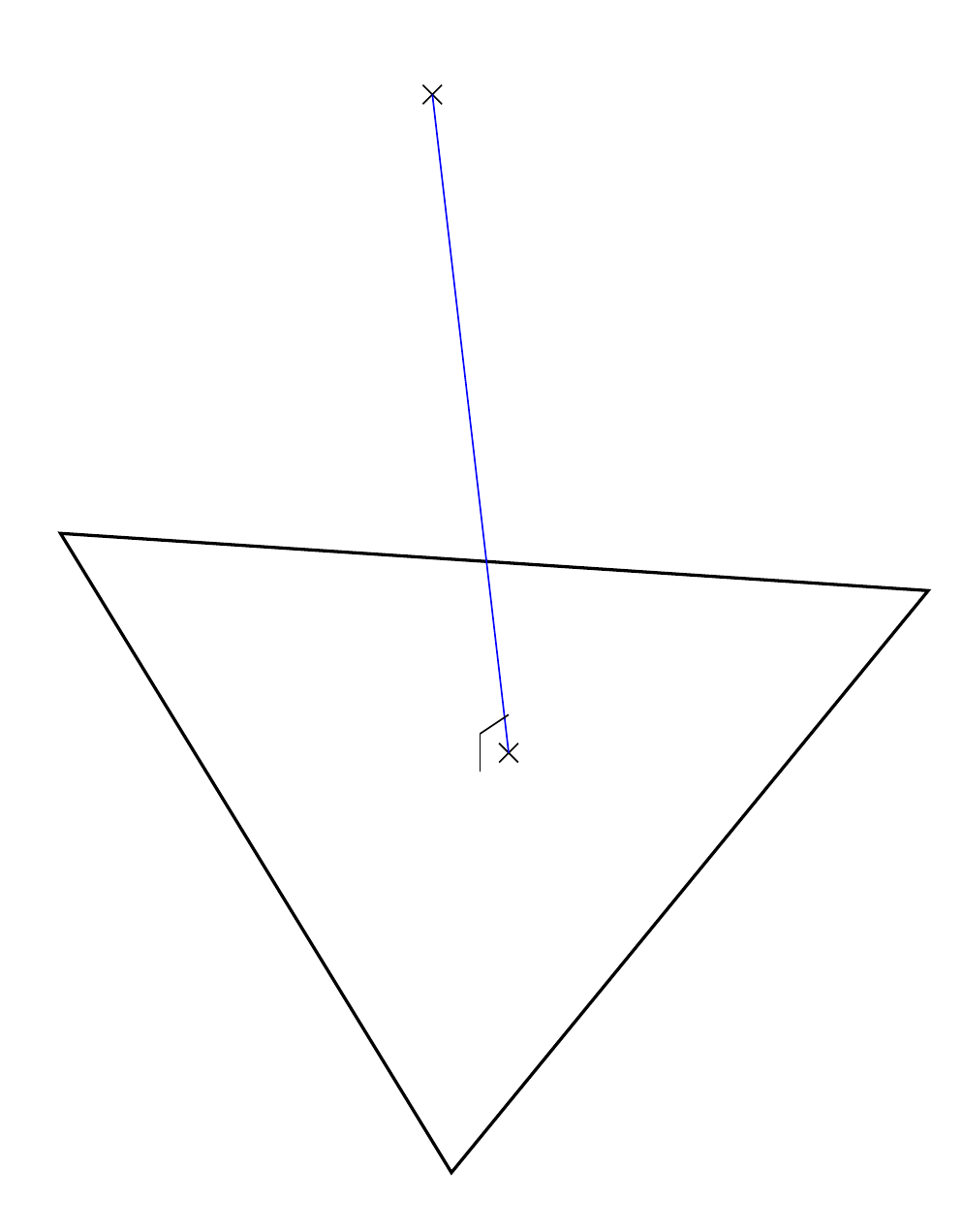}
\setlength{\unitlength}{4144sp}%
\begin{picture}(0,0) (100,100)
\put(-650,1650){\makebox(0,0)[lb]{\smash{{\SetFigFont{12}{14.4}{\rmdefault}{\mddefault}{\updefault}{\color[rgb]{0,0,0}$S$}%
}}}}
\put(-550,700){\makebox(0,0)[lb]{\smash{{\SetFigFont{12}{14.4}{\rmdefault}{\mddefault}{\updefault}{\color[rgb]{0,0,0}$G$}%
}}}}
\put(-1400,1100){\makebox(0,0)[lb]{\smash{{\SetFigFont{12}{14.4}{\rmdefault}{\mddefault}{\updefault}{\color[rgb]{0,0,0}$A$}%
}}}}
\put(-50,1050){\makebox(0,0)[lb]{\smash{{\SetFigFont{12}{14.4}{\rmdefault}{\mddefault}{\updefault}{\color[rgb]{0,0,0}$B$}%
}}}}
\put(-750,10){\makebox(0,0)[lb]{\smash{{\SetFigFont{12}{14.4}{\rmdefault}{\mddefault}{\updefault}{\color[rgb]{0,0,0}$C$}%
}}}}
\end{picture}%
}
&
\includegraphics[trim = 3cm 0cm 3cm 0cm, clip, scale = 0.3]{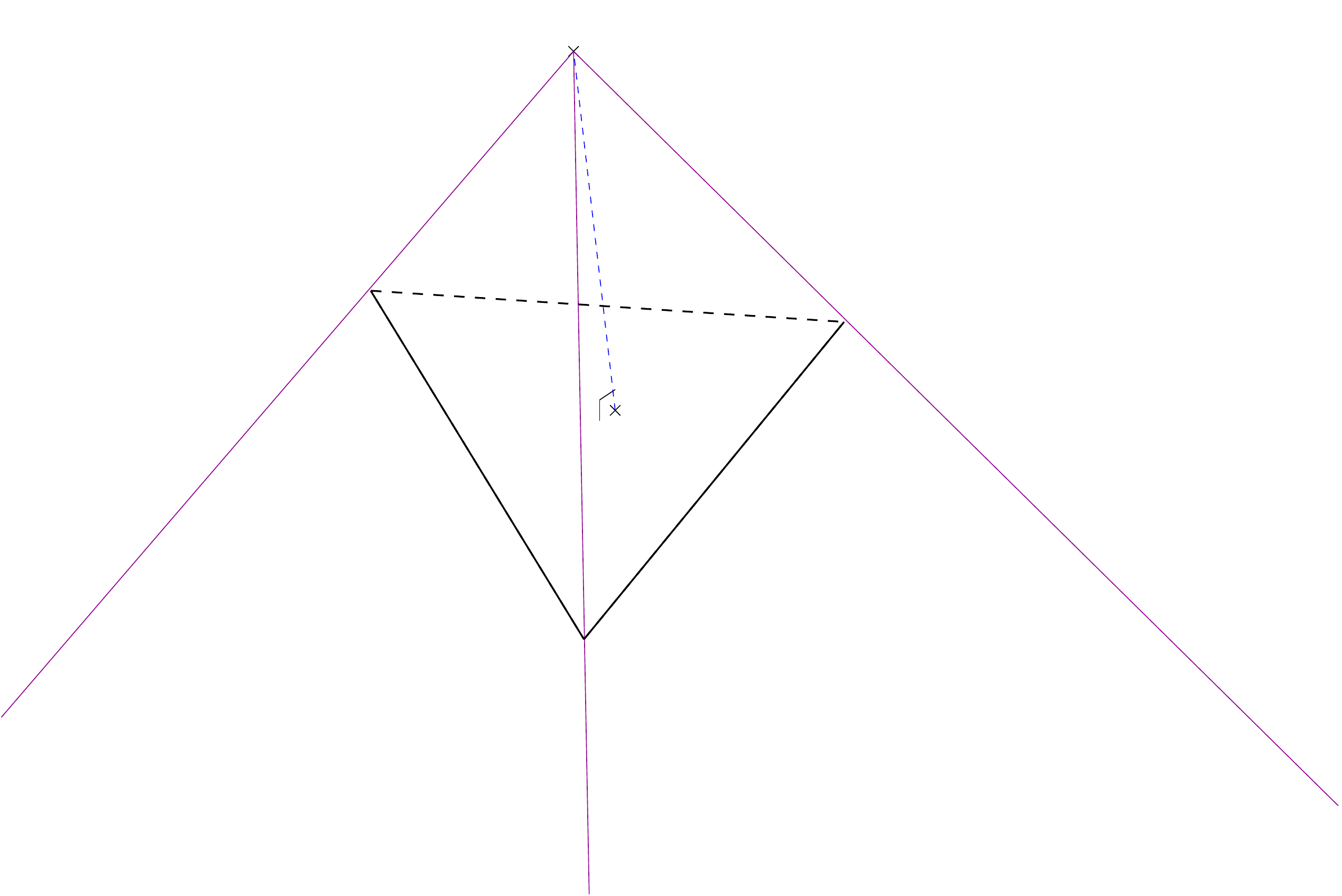}%
\setlength{\unitlength}{4144sp}%
\begin{picture}(0,0) (100,100)
\put(-1400,2350){\makebox(0,0)[lb]{\smash{{\SetFigFont{12}{14.4}{\rmdefault}{\mddefault}{\updefault}{\color[rgb]{0,0,0}$S$}%
}}}}
\put(-1280,1350){\makebox(0,0)[lb]{\smash{{\SetFigFont{12}{14.4}{\rmdefault}{\mddefault}{\updefault}{\color[rgb]{0,0,0}$G$}%
}}}}
\put(-2150,1700){\makebox(0,0)[lb]{\smash{{\SetFigFont{12}{14.4}{\rmdefault}{\mddefault}{\updefault}{\color[rgb]{0,0,0}$A$}%
}}}}
\put(-750,1650){\makebox(0,0)[lb]{\smash{{\SetFigFont{12}{14.4}{\rmdefault}{\mddefault}{\updefault}{\color[rgb]{0,0,0}$B$}%
}}}}
\put(-1450,610){\makebox(0,0)[lb]{\smash{{\SetFigFont{12}{14.4}{\rmdefault}{\mddefault}{\updefault}{\color[rgb]{0,0,0}$C$}%
}}}}
\end{picture}%
&
\includegraphics[trim = 2cm 0cm 5cm 0cm, clip, scale = 0.3]{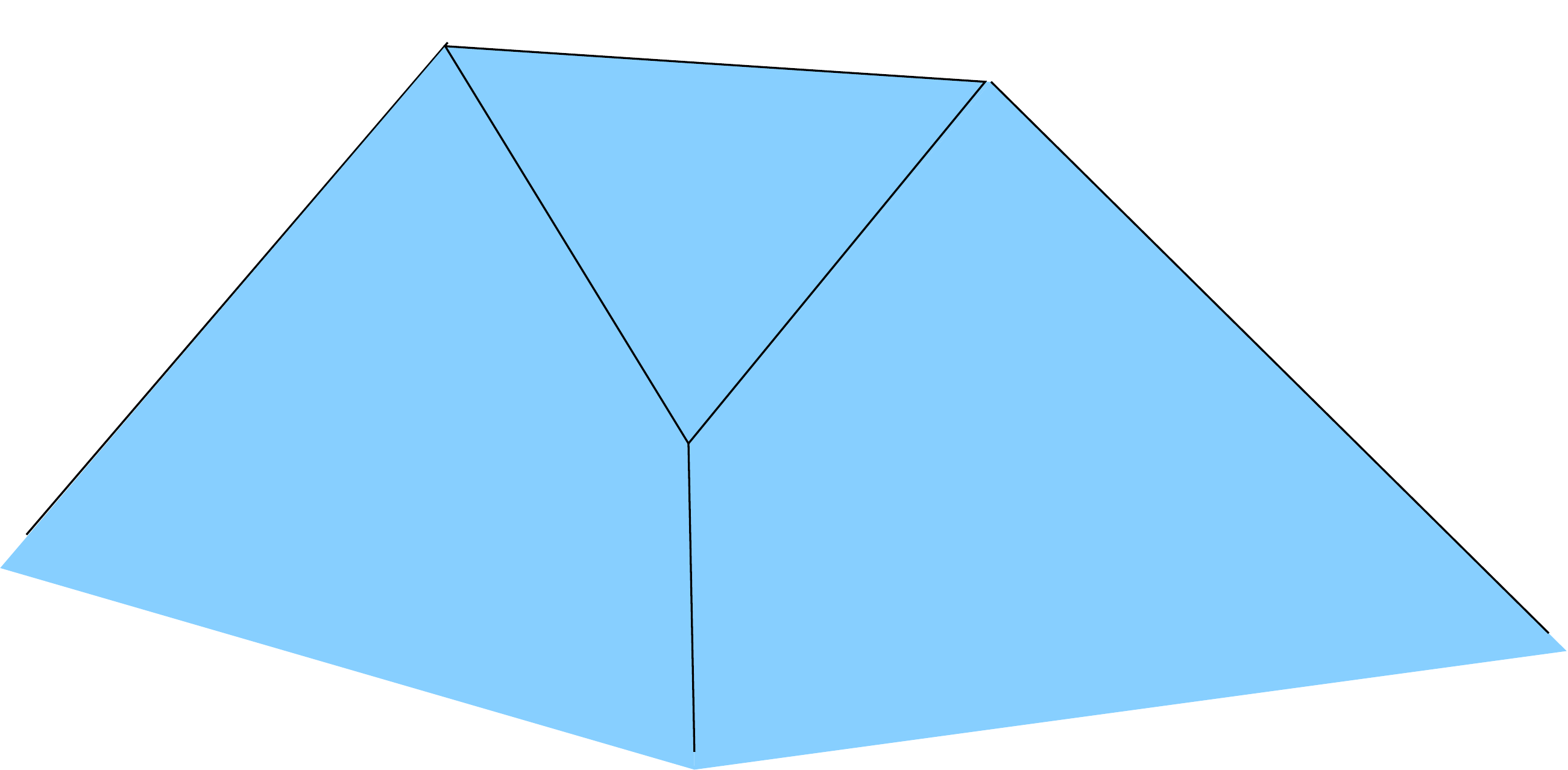}%
\setlength{\unitlength}{4144sp}%
\begin{picture}(0,0) (100,100)
\put(-1950,1750){\makebox(0,0)[lb]{\smash{{\SetFigFont{12}{14.4}{\rmdefault}{\mddefault}{\updefault}{\color[rgb]{0,0,0}$A$}%
}}}}
\put(-1200,2050){\makebox(0,0)[lb]{\smash{{\SetFigFont{22}{24.4}{\rmdefault}{\mddefault}{\updefault}{\color[rgb]{0,0,0}$\Omega^+$}%
}}}}
\put(-1950,650){\makebox(0,0)[lb]{\smash{{\SetFigFont{22}{24.4}{\rmdefault}{\mddefault}{\updefault}{\color[rgb]{0,0,0}$\Omega-$}%
}}}}
\put(-500,1700){\makebox(0,0)[lb]{\smash{{\SetFigFont{12}{14.4}{\rmdefault}{\mddefault}{\updefault}{\color[rgb]{0,0,0}$B$}%
}}}}
\put(-1200,660){\makebox(0,0)[lb]{\smash{{\SetFigFont{12}{14.4}{\rmdefault}{\mddefault}{\updefault}{\color[rgb]{0,0,0}$C$}%
}}}}
\end{picture}%
\end{tabular}
\centering\caption{Construction of the extension of a single triangle modeling a piece of crack.\label{figextension}}
\end{figure}
\FloatBarrier

The same process is followed for all the triangles, so that the region we define (blue color in Figure~\ref{figextension}) is obtained as the reunion of all regions corresponding to each triangle. Note that the side of the triangle on which we have set the point $S$ can be chosen arbitrarily for the first triangle, but for practical reason this side - in other words the orientation of the normal vector - has to be kept for the rest of the triangles.

\subsection{Results and comparison with experimental data}
The mesh we use for the whole computational domain is constructed generically from a digital elevation model provided by IGN ({\sc Institut G\'eographique National}, French National Geographic Institute). From these surface data, we first construct a surface mesh using a matlab code (Figure~\ref{figmeshvolc}, left), and then the volume mesh is generated automatically with {\sc Gmsh} software (Figure~\ref{figmeshvolc}, right).

\begin{figure}[!h]
\begin{tabular} {c|c}
\includegraphics[trim = 0cm 0cm 0cm 0cm, clip, scale=0.45]{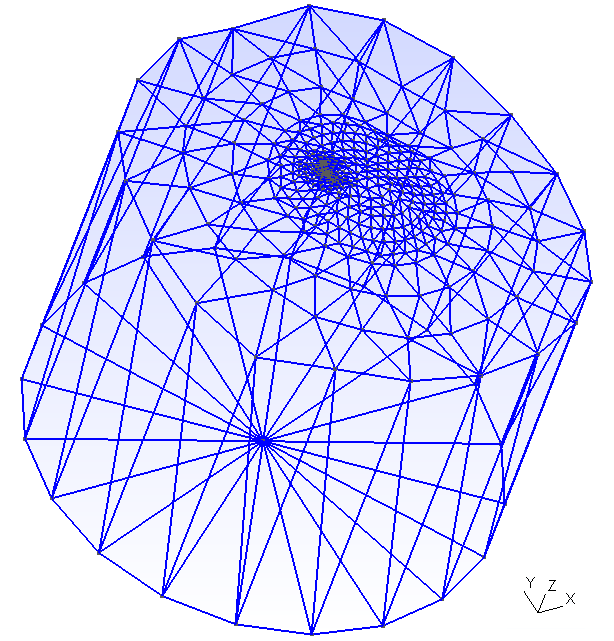} &
\includegraphics[trim = 0cm 0cm 0cm 0cm, clip, scale=0.45]{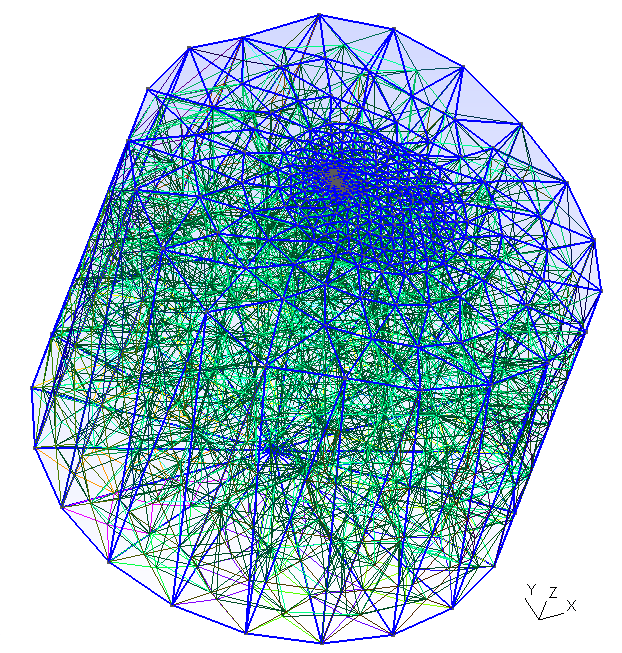}
\end{tabular}
\centering\caption{The 3D mesh of the computational domain is generated from digital elevation model, from which a surface mesh is first generated (left) before creating the volume mesh (right).\label{figmeshvolc}}
\end{figure}
\FloatBarrier
Note that this mesh does not take the crack into account. The local refinement we observe is due to the fact that some {\it eruptive fissures} corresponding to lava emission are observed at the surface (see the upper triangles on Figure~\ref{figdike}), which indicates where the magma-filled crack is intersecting the ground surface. 
%We insist on the fact that this mesh do not depend on the crack.

Elastic parameters are chosen as $E = 5000$ MPa for the Young modulus and $\nu = 0.25$ for the Poisson ratio following \cite{Fukushima2005}. Recall that the Lam\'e coefficients can be related to the elastic coefficients $\nu$ and $E$ by the formulas below:
\begin{eqnarray*}
\lambda_L = \frac{E\nu}{(1-2\nu)(1+\nu)}, & \quad & \mu_L = \frac{E}{2(1+\nu)}. 
\end{eqnarray*}
Pressure forces are applied on both sides of the crack $\Gamma_T$, with values $p = 5$ MPa. The crater visible on the figures below has a diameter approximately equal to 1 km, and the crack surface is about 2.8 km$^2$. The global mesh consists of 8376 points and 4969 tetrahedral cells. The numerical simulation which solves the displacement corresponding to the crack of Figure~\ref{figdike} in the whole computational domain is presented in Figure~\ref{figvolcan}.

\begin{figure}[!h]
\begin{tabular} {c|c}
\includegraphics[trim = 1cm 0cm 2cm 0cm, clip, scale=0.25]{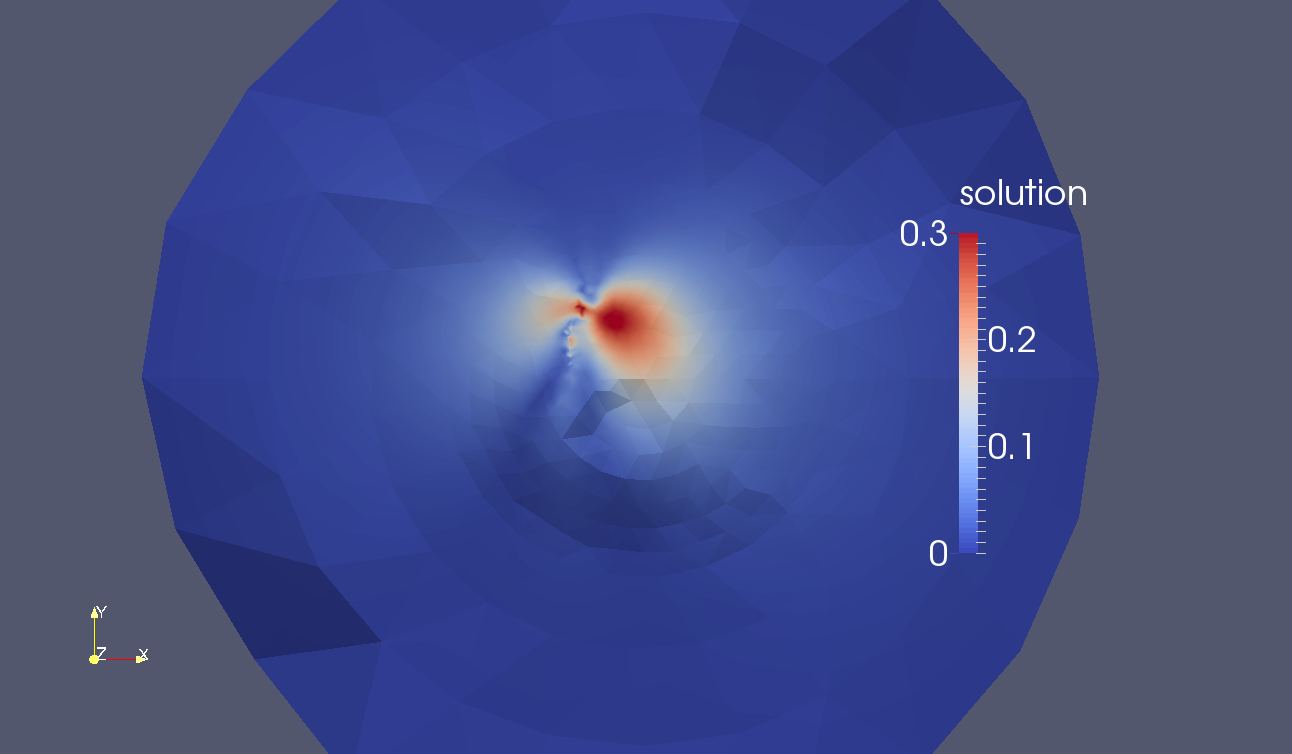} &
\includegraphics[trim = 1cm 0cm 2cm 0cm, clip, scale=0.25]{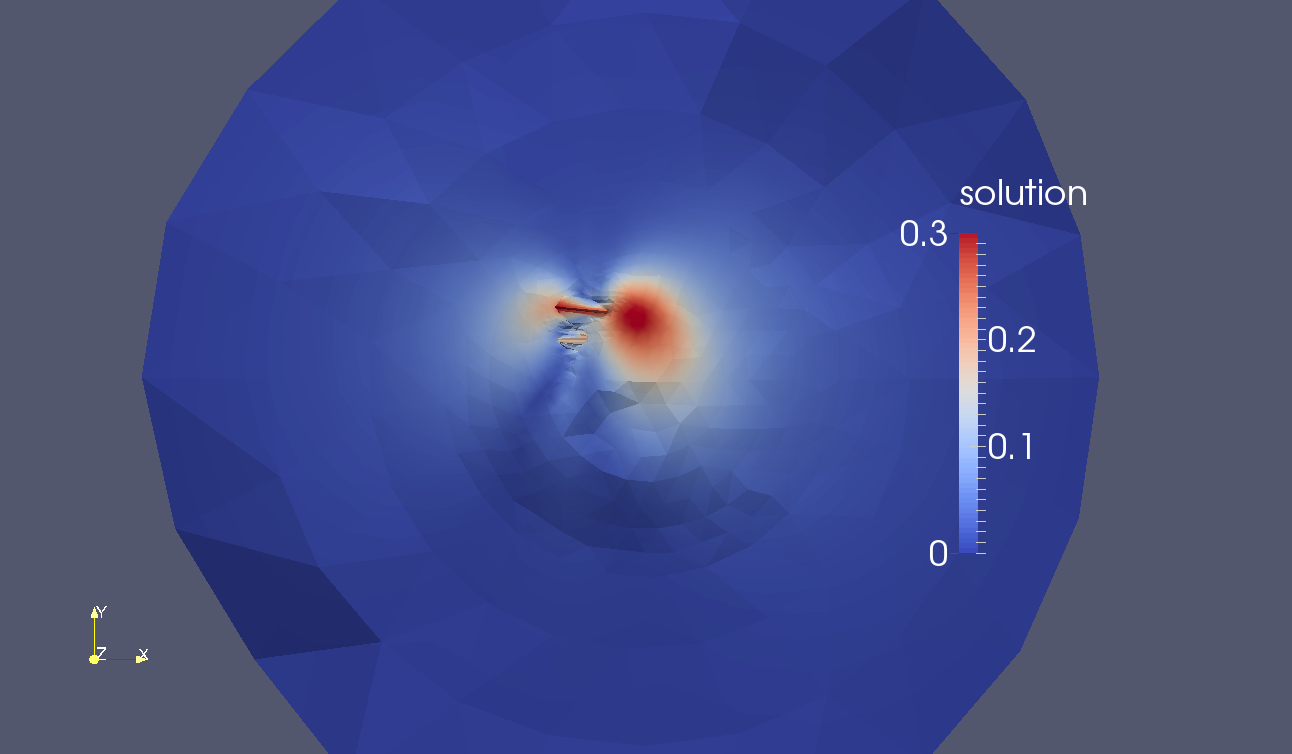}
\\ \hline \\
\includegraphics[trim = 1cm 0cm 2cm 0cm, clip, scale=0.25]{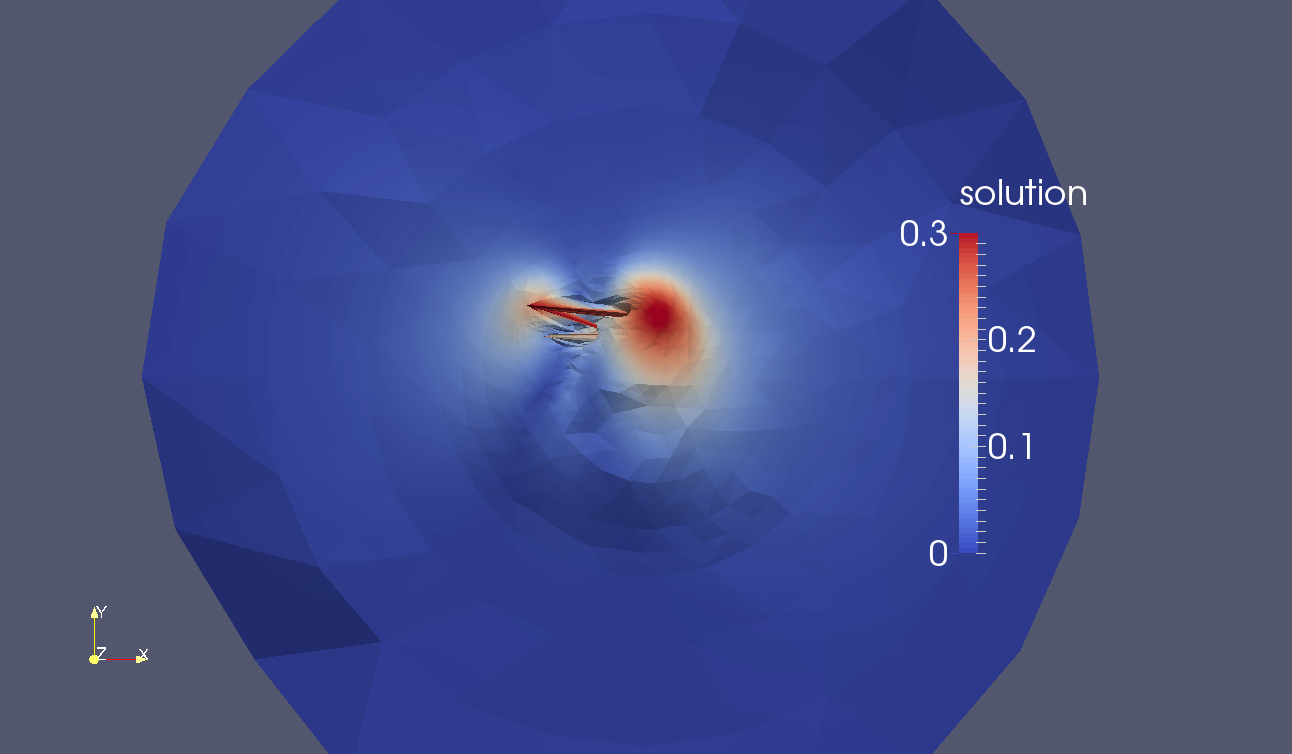}
&
\includegraphics[trim = 1cm 0cm 2cm 0cm, clip, scale=0.25]{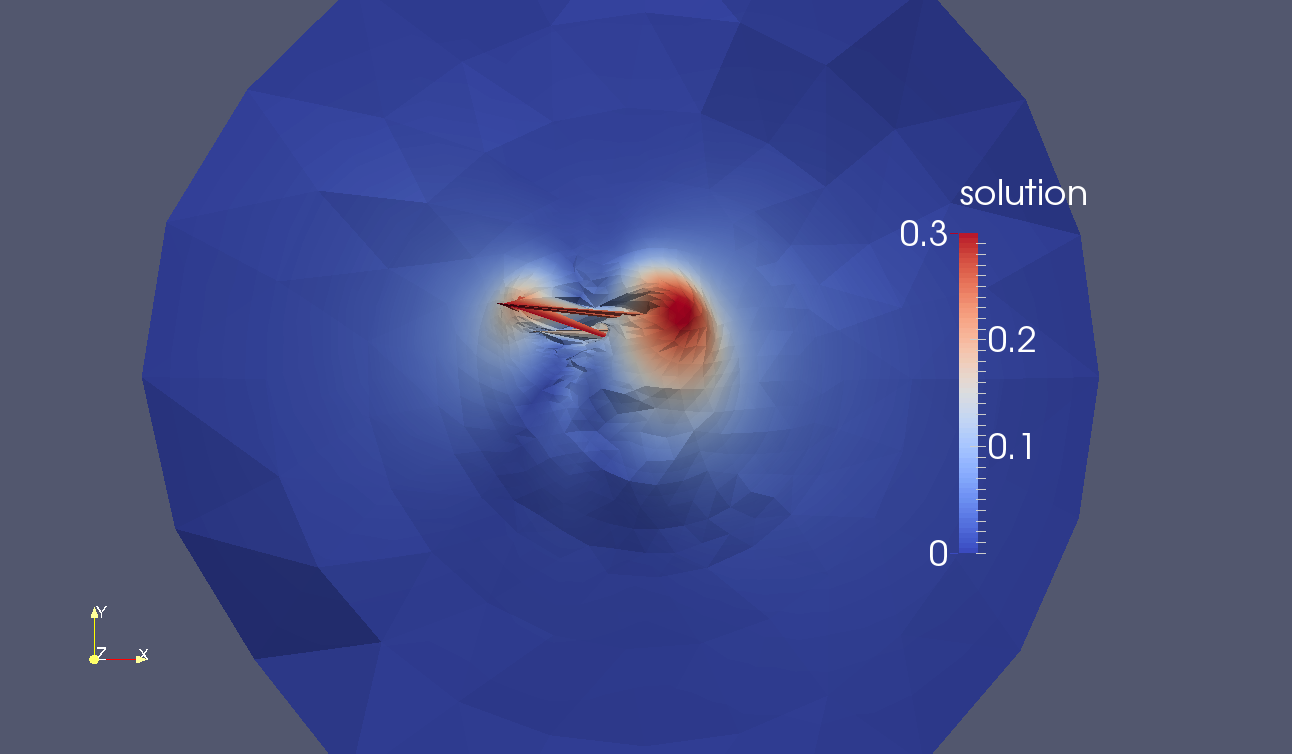}
\end{tabular}
\begin{center}\caption{Amplitude of the displacement due to pressures applied on the crack. The domain is warped by $1$, $1000$, $2000$ and $3000$ respectively, w.r.t. the displacement. The maximum - about 0.65 m - is reached at depth.\label{figvolcan}}
\end{center}
\end{figure}
\FloatBarrier
The large deformations observed at the surface (when the domain is warped w.r.t. the displacement) are due to surface openings at the top of the crack (Figure~\ref{figdike}). We can remove these surface openings, by removing the top triangles of the fracture of Figure~\ref{figdike}; The corresponding displacement is represented in Figure~\ref{figvolcan_ter}.
%\begin{figure}[!h]
%\begin{tabular} {c|c}
%\includegraphics[trim = 1cm 0cm 2cm 0cm, clip, scale=0.25]{interne.png} &
%\includegraphics[trim = 1cm 0cm 2cm 0cm, clip, scale=0.25]{interne_warp1000.png}
%\\ \hline
%\includegraphics[trim = 1cm 0cm 2cm 0cm, clip, scale=0.25]{interne_warp2000.png}
%&
%\includegraphics[trim = 1cm 0cm 2cm 0cm, clip, scale=0.25]{interne_warp3000.png}
%\end{tabular}
%\centering\caption{Intensity of the displacement due to traction forces applied on the fracture without surface openings. The domain is warped by $1$, $1000$, $2000$ and $3000$ respectively, with respect to the displacement. The maximum - about 0.62 m - is reached in depth.\label{figvolcan_bis}}
%\end{figure}
%\FloatBarrier
\begin{figure}[!h]
\begin{tabular} {c|c}
\includegraphics[trim = 0cm 0cm 0cm 0cm, clip, scale=0.30]{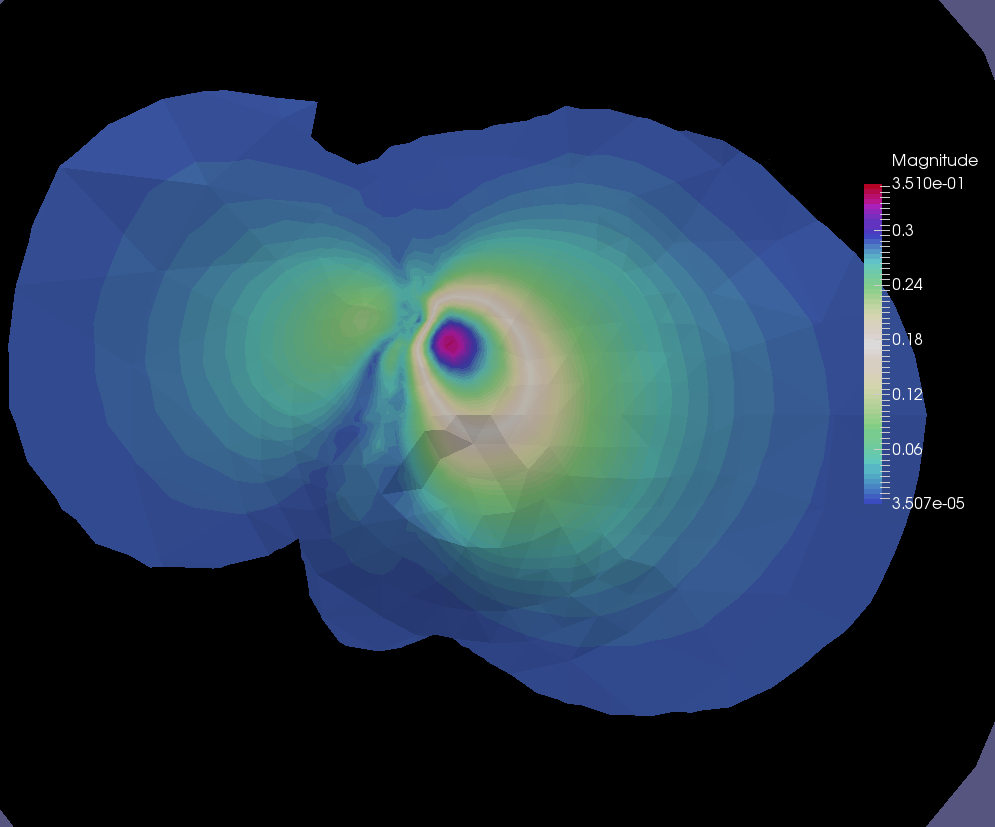} &
\includegraphics[trim = 0cm 0cm 0cm 0cm, clip, scale=0.30]{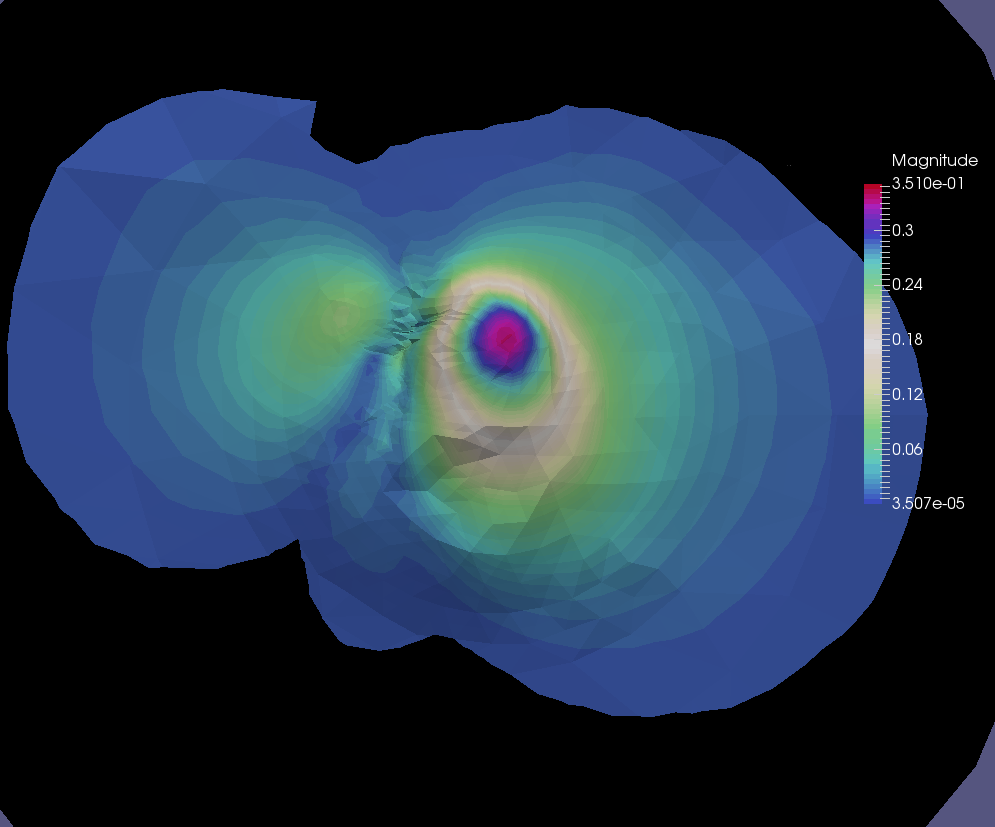}
\end{tabular}
\centering\caption{Amplitude of the displacement due to pressures applied on the fracture without surface openings. The domain is let unwarped (left) and then warped by $2000$ (right), w.r.t. the displacement. The maximum - about 0.62 m - is reached in depth.\label{figvolcan_ter}}
\end{figure}
\FloatBarrier

Our simulation can be compared qualitatively with measurements obtained by synthetic aperture radar \cite{Fukushima2005} (see Figure 16 or Figure 12 for instance), which validates qualitatively our simulation, and thus our method. Moreover, the order of magnitude of the displacement computed (about 0.50 m) seems also to correspond to the experimental observations.

\section{Conclusion} \label{seconc}
In this work, we have proposed a new finite element method for considering a crack and simulating the displacement and stress induced by a crack located inside a heterogeneous and anisotropic elastic medium. In order to avoid re-meshing and save computation time when inverting from surface deformation data or when simulating a propagating fracture, we consider a single global mesh and we develop a fictitious domain approach. Numerical tests were performed in order to highlight the optimal accuracy of the method. Robustness with respect to the geometry is ensured by the implementation of a stabilization technique. A theoretical analysis underlines the relevance of the latter for computing a good approximation of a dual variable. Numerical tests enable us to think that our method is practical for an algorithmic framework in which multiple ways of taking into account the crack will be considered. Indeed, the inversion of the position/shape of the crack - inside a volcano, from surface measurements for instance - is an algorithmic problem that can involve this kind of dual variables (shape derivatives techniques involving adjoint direct systems for instance). The illustration of this method on an inverse problem will be presented in a forthcoming work. The fictitious domain approach we have developed enables us to consider a single global mesh for all kind of geometries associated with the crack. In the context of inverse problems, we expect its simplicity to be quite effective in terms of computational time.

\section*{Acknowledgments}
Our softwares are written in C++ language, using the free finite element library {\sc GETFEM++} \cite{Getfem} developed by Yves Renard and Julien Pommier. The authors thank the {\sc GETFEM++} users community for collaborative efforts.

\nocite*
\bibliographystyle{alpha}
\bibliography{FD_Crack_Ref}

\end{document}